\tikzset{ext/.style={circle, draw,inner sep=1pt},int/.style={circle,draw,fill,inner sep=1pt},nil/.style={inner sep=1pt}}
\tikzset{exte/.style={circle, draw,inner sep=3pt},inte/.style={circle,draw,fill,inner sep=3pt}}
\tikzset{diagram/.style={matrix of math nodes, row sep=3em, column sep=2.5em, text height=1.5ex, text depth=0.25ex}}
\tikzset{diagram2/.style={matrix of math nodes, row sep=0.5em, column sep=0.5em, text height=1.5ex, text depth=0.25ex}}
\numberwithin{equation}{section}
\begin{document}

\title[On the wheeled PROP of stable cohomology of $Aut(F_n)$]{On the wheeled PROP of stable cohomology of $Aut(F_n)$ with bivariant coefficients}
\date{}
\author{Nariya Kawazumi}
\address{Department of Mathematical Sciences, University of Tokyo,3-8-1 Komaba, Meguro-ku, Tokyo 153-8914, Japan 
}
\email{kawazumi@ms.u-tokyo.ac.jp}
\author{Christine Vespa}
\address{Universit\'e de Strasbourg, CNRS, Institut de Recherche Math\'ematique Avanc\'ee, IRMA UMR 7501, F-67000, Strasbourg, France.}
\email{vespa@math.unistra.fr}
\keywords{}
\subjclass[2000]{}

\newtheorem{THM}{Theorem}
\newtheorem{PROP}[THM]{Proposition}
\newtheorem{CONJ}[THM]{Conjecture}
\newtheorem{COR}{Corollary}
\newtheorem{thm}{Theorem}[section]
\newtheorem{prop}[thm]{Proposition}
\newtheorem{propdef}[thm]{Proposition-Definition}
\newtheorem{cor}[thm]{Corollary}
\newtheorem{lm}[thm]{Lemma}
\newtheorem{conj}[thm]{Conjecture}

\theoremstyle{definition}
\newtheorem{defn}[thm]{Definition}
\newtheorem{ex}[thm]{Example}
\newtheorem{EXAM}{Example}
\newtheorem{exer}[thm]{Exercise}
\theoremstyle{remark}
\newtheorem{rem}[thm]{Remark}
\newtheorem{REM}{Remark}
\newtheorem{nota}[thm]{Notation}
\newtheorem{hyp}[thm]{Hypothesis}
\newtheorem{quest}{Question}
\renewcommand{\phi}{\varphi}
\renewcommand{\epsilon}{\varepsilon}
\renewcommand{\hom}{\mathrm{Hom}}
\newcommand{\cre}{\mathrm{cr}}
\newcommand{\F}{\mathcal{F}}
\newcommand{\C}{\mathcal{C}}
\newcommand{\V}{\mathcal{V}}
\newcommand{\N}{\mathbb{N}}
\newcommand{\Z}{\mathbb{Z}}
\newcommand{\Q}{\mathbb{Q}}

\newcommand{\abe}{\mathfrak{a}}
\newcommand{\gr}{\mathbf{gr}}
\newcommand{\kk}{{\Bbbk}}

\newcommand{\Hst}{\mathcal{H}}
\newcommand{\Ext}{\mathcal{E}}
\newcommand{\Sur}{\mathcal{S}}

\begin{abstract}
We show that the stable cohomology of automorphism groups of free groups with coefficients obtained by applying $Hom(-, - )$ to tensor powers of the abelianization, is equipped with the structure of a wheeled PROP $\mathcal{H}$. We  define another wheeled PROP $\Ext$ by $Ext$-groups in the category of functors from the category of finitely generated free groups to $\kk$-modules. The main result of this paper is the construction of a morphism of wheeled PROPs $\phi: \Ext \to \Hst$ such that $\phi(\Ext)$ is the wheeled PROP generated by the cohomology class $h_1$ constructed by the first author.
\end{abstract}

\maketitle

\section{Introduction}
This paper concerns the cohomology of automorphism groups of free groups $Aut(\Z^{*n})$, for $n \in \N$,  with coefficients given 
by the  $\kk$-modules:
$$B_{l,q}(\Z^{*n}, \Z^{*n}):=Hom_{\mathcal{V}}((\kk^n)^{\otimes l}, (\kk^n)^{\otimes q} )$$
where $l,q \in \N$, $\kk$ is a commutative ring and $\V$ is the category of $\kk$-modules and where the structure of $Aut(\Z^{*n})$-module on $B_{l,q}(\Z^{*n}, \Z^{*n})$ is given by the diagonal action.

In \cite{Kawa-Magnus} (see also \cite{Kawa-TMM}), for $n \geq 2$, the first author introduced a non-zero cohomology class:
$$h_1 \in H^1(Aut(\Z^{*n}), 
Hom_{\mathcal{V}}(\kk^n, (\kk^n)^{\otimes 2}))$$ 
 and constructed, from $h_1$, cohomology classes $h_p \in H^p( Aut(\Z^{*n}), Hom_{\mathcal{V}}(\kk^n, (\kk^n)^{\otimes p+1}))$ for $p>1$ and $\overline{h_p} \in H^p( Aut(\Z^{*n}), (\kk^n)^{\otimes p})$ for $p\geq 1$, even in the unstable range. The construction of these classes are inspired by previous works by Morita \cite{Mo95} and Morita with the first author \cite{KM96, KM01} concerning cohomology classes of the mapping class group with trivial coefficients $\mathbb{Q}$. (See Remark \ref{Rem-M-KM}.)

By classical construction (see Section \ref{def-stab}) there are group morphisms:
$$H^*(Aut(\Z^{*n+1}); B_{l,q}(\Z^{*n+1}, \Z^{*n+1})) \xrightarrow{\alpha_n} H^*(Aut(\Z^{*n}); B_{l,q}(\Z^{*n}, \Z^{*n})).$$

The stable cohomology of the automorphism groups of free groups with coefficients given by $B_{l,q}$ is defined  by
$$H^*_{st}(B_{l,q}):=\underset{n \in \N}{\text{lim}}H^*(Aut(\Z^{*n}); B_{l,q}(\Z^{*n}, \Z^{*n}))$$
where the limit is taken over the group morphisms $\alpha_n$. 

By a result of Randal-Williams and Wahl \cite{RWW}, this cohomology stabilizes so that the stable cohomology $H^i_{st}(B_{l,q})$ is isomorphic to $H^i(Aut(\Z^{*n}); B_{l,q}(\Z^{*n}, \Z^{*n}) )$ for $n$ big enough. It follows from the stability that stable cohomology is equipped with a cup product map:
$$\cup : H^*_{st}(B_{l_1,q_1})\otimes H^*_{st}(B_{l_2,q_2})\to  H^{*}_{st}(B_{l_1,q_1}\otimes B_{l_2,q_2} )$$
for $l_1,q_1, l_2, q_2 \in \N$.

In Definition \ref{def-H}, we define the PROP $\Hst$ where the morphisms are the graded $(\mathfrak{S}_q, \mathfrak{S}_l)$-bimodules 
$$\Hst(q,l)=H^*_{st}(B_{l,q})$$
where the action of symmetric group $\mathfrak{S}_q$ (resp. $\mathfrak{S}_l$) is given by place permutation on the tensor product $(-)^{\otimes q}$ (resp. $(-)^{\otimes l}$) and where the horizontal composition is given by the cup product map for stable cohomology and the vertical composition is induced by the composition in $\V$.

We show that this PROP is equipped with further structure:

\begin{PROP}[Proposition \ref{H-wP}]
The PROP $\Hst$ is a wheeled PROP i.e. it is equipped with contraction maps
$$\xi_j^i: \Hst(q,l) \to \Hst(q-1, l-1)$$
for $1\leq i\leq q$ and $1\leq j\leq l$ compatible with the structure of PROP.
\end{PROP}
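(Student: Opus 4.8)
The plan is to exhibit the contraction maps $\xi^i_j$ first at the level of the coefficient modules and then to induce them on (stable) cohomology. Since $\kk^n$ is a finitely generated free $\kk$-module it is canonically self-dual, so that
$$B_{l,q}(\Z^{*n},\Z^{*n})=\mathrm{Hom}_{\V}((\kk^n)^{\otimes l},(\kk^n)^{\otimes q})\cong ((\kk^n)^{*})^{\otimes l}\otimes (\kk^n)^{\otimes q}.$$
For $1\le i\le q$ and $1\le j\le l$ I would define $\xi^i_j$ on coefficients by applying the evaluation pairing $\mathrm{ev}\colon (\kk^n)^{*}\otimes \kk^n\to \kk$ to the $j$-th dual tensorand and the $i$-th ordinary tensorand; concretely, writing $(e_a)$ for the standard basis and $(e_a^{*})$ for its dual, $\xi^i_j(f)$ inserts $e_a$ in the $j$-th input slot, applies $f$, contracts the $i$-th output slot against $e_a^{*}$, and sums over $a$. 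This is exactly the categorical partial trace coming from the compact closed structure of finitely generated free $\kk$-modules.

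The first point to check is $Aut(\Z^{*n})$-equivariance. The action on $B_{l,q}$ is the diagonal action induced by the action of $Aut(\Z^{*n})$ on the abelianization $\kk^n$, which factors through $GL_n(\kk)$; since the evaluation pairing $\mathrm{ev}$ (equivalently, the coevaluation element $\sum_a e_a\otimes e_a^{*}$ corresponding to the identity) is $GL_n(\kk)$-invariant, which is just the conjugation invariance of the trace, the partial trace $\xi^i_j$ is a morphism of $Aut(\Z^{*n})$-modules. Functoriality of group cohomology then yields maps $H^*(Aut(\Z^{*n});B_{l,q})\to H^*(Aut(\Z^{*n});B_{l-1,q-1})$ for every $n$.

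The crux is to descend these maps to stable cohomology, i.e.\ to check compatibility with the stabilization morphisms $\alpha_n$. Here a genuine subtlety appears, and I expect it to be the main obstacle: on coefficients the partial trace does \emph{not} commute strictly with $\alpha_n$. Indeed, the coefficient map underlying $\alpha_n$ is induced by the splitting $\kk^{n+1}=\kk^n\oplus \kk$ (restrict the source along $\kk^n\hookrightarrow\kk^{n+1}$, project the target along $\kk^{n+1}\twoheadrightarrow\kk^n$), and comparing $\xi^i_j$ at level $n+1$ with $\xi^i_j$ at level $n$ one finds a discrepancy equal to the contribution of the extra basis vector $e_{n+1}$: the partial trace over $\kk^{n+1}$ exceeds the one over $\kk^n$ by the diagonal term in the $(n+1)$-st coordinate. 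The plan is to show that this discrepancy, which factors through the $Aut(\Z^{*n})$-invariant vectors $e_{n+1}$ and $e_{n+1}^{*}$ of the trivial summand, induces the zero map on stable cohomology, so that the square relating $\xi^i_j$ at the two levels commutes after passing to the limit; this is where the stability results of Randal-Williams and Wahl, and the concrete nature of stable classes, enter. Once this is established, the $\xi^i_j$ assemble to well-defined maps $\Hst(q,l)\to\Hst(q-1,l-1)$.

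It then remains to verify the wheeled PROP axioms and their compatibility with the PROP structure of $\Hst$. These are routine consequences of the corresponding identities for partial traces in a compact closed category together with the naturality of the operations defining $\Hst$: the equivariance of $\xi^i_j$ under the $\mathfrak{S}_q\times\mathfrak{S}_l$-actions and the commutation relations among the various $\xi^i_j$ already hold at the level of coefficients; compatibility with the horizontal composition follows because the cup product is natural in the coefficients and the contraction only involves tensor factors on one side of the product; and compatibility with the vertical composition follows from the fact that contraction commutes with composition in $\V$. I would organize these verifications by reducing each axiom to a coefficient-level identity and then invoking functoriality of stable cohomology.
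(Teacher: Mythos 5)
Your construction of the contractions at the level of coefficients is exactly the paper's: your partial traces are the maps $\varphi^i_j$ of Example \ref{ex-wP}, and your equivariance argument ($GL_n(\kk)$-invariance of the evaluation pairing, the $Aut(\Z^{*n})$-action factoring through the abelianization) is the one underlying Proposition \ref{H-wP}. Your treatment of the axioms --- reduce each one to a partial-trace identity on coefficients, then invoke naturality of the operations defining $\Hst$ --- also matches the paper, which verifies bi-equivariance explicitly and compatibility with horizontal composition (vertical composition being determined by these via (\ref{vertical-wP})).

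The divergence, and the gap, lies in the descent to stable cohomology. The paper never forms a map of towers: it invokes the stability isomorphism (\ref{iso-stable}) at a single level $n \geq N_{l,q,*}$, observes that $N_{l,q,*} > N_{l-1,q-1,*}$ so that source and target are simultaneously stable there, and transports $H^*(Aut(\Z^{*n}),\varphi^i_j)$ through these identifications. You instead propose to induce the map on the limit, correctly noting that $\varphi^i_j$ does not commute with the stabilization coefficient maps $B(i_n,p_n)$, the discrepancy being evaluation against the $Aut(\Z^{*n})$-invariant vectors $e_{n+1}$ and $e_{n+1}^{*}$. But your crucial step --- that this discrepancy induces zero on stable cohomology --- is only announced, never proved, and as stated it fails already in degree $0$: the identity class in $\Hst(1,1)^0\simeq\kk$ is represented at every level by $[\mathrm{Id}_{\kk^n}]$, and the two composites around your square send $[\mathrm{Id}_{\kk^{n+1}}]$ to $n+1$ and to $n$ respectively in $H^0(Aut(\Z^{*n});\kk)=\kk$, so the discrepancy sends this stable class to $1\neq 0$. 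Hence your route cannot be completed as written; at best the vanishing could be attempted in positive cohomological degrees, which is precisely the point your plan defers. (Since the $\alpha_n$ are isomorphisms in the stable range, the vanishing you want is equivalent to independence of the paper's transported definition from the choice of $n$; your observation thus isolates a genuine subtlety --- the trace of the identity is not stable --- that the paper's one-line descent does not discuss, and any complete argument must confront it, for instance by restricting to the degrees and classes on which the discrepancy does vanish.)
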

Wheeled PROPs were introduced by Markl, Merkulov and Shadrin in \cite{MMS} to treat PROPs equipped with trace maps. The typical example of a wheeled PROP is the PROP of endomorphism of a free finitely-generated module where the contractions are given by partial trace maps (see Example \ref{ex-wP}).
The wheeled PROP structure on the PROP $\Hst$ should be viewed as a cohomological version of the wheeled endomorphism PROP.

In the stable range, for $p>1$, the cohomology classes $h_p$ are obtained from $h_1$ using the horizontal and vertical composition in the PROP $\Hst$ and for $p\geq 1$, the classes $\overline{h_p}$ are obtained from $h_p$ using the contraction maps. We deduce that the classes $h_p$ and $\overline{h_p}$ are in the sub-wheeled PROP $\mathcal{K}$ of $\Hst$ generated by the class $h_1$.

Understanding the stable cohomology of $Aut(\Z^{*n})$ with coefficients given by $B_{l,q}$
is equivalent to give a description of the wheeled PROP $\Hst$ in terms of generators and relations.
This is open; in particular, it is unknown whether the inclusion functor $\mathcal{K} \hookrightarrow \Hst$ is strict.

By the results in  \cite{DV2} and \cite{Djament} we know that the stable cohomology of $Aut(\Z^{*n})$ with non-constant coefficients is closely related to $Ext$-groups in the category $\F(\gr; \kk)$ of functors from the category $\gr$ of finitely generated free groups to the category $\V$ of $\kk$-modules. More precisely, the main result of \cite{DV2}, obtained using functor homology methods, implies that $\Hst(0,l)=0$ for $l>0$ and  \cite[Th\'eor\`eme 4]{Djament} gives, for $\kk=\Q$ a natural isomorphism
\begin{equation} \label{iso-Dja}
\Hst(q,0) \simeq {\underset{j\in \N}{\bigoplus}}Ext^{*-j}_{\F(\gr; \kk)}(\Lambda^j  \abe, \abe^{\otimes q})
\end{equation}
where $\abe^{\otimes q}$ is the $q$-th tensor power of the abelianization functor and $\Lambda^j  \abe$ is the $j$-th exterior power of the abelianization functor (see Section \ref{Rappel-foncteurs}).
The $Ext$-groups on the right hand side of the isomorphism (\ref{iso-Dja}) are computed  in \cite{Vespa} (the result is recalled in Proposition \ref{Ext2}) giving the explicit computation of $\Hst(q,0)$. 

Note that Randal-Williams obtained in \cite{RW} the computation of $\Hst(0,l)$ and $\Hst(q,0)$ using geometric techniques independent of the approach in  \cite{DV2} and \cite{Djament}.

Inspired by a conjecture given in \cite{Djament}  we define in  Section \ref{PROP-E}, for $\kk=\Q$,  a  PROP $\Ext$ where the morphisms are the  graded $(\mathfrak{S}_q, \mathfrak{S}_l)$-bimodules 
$$\Ext(q,l)={\underset{j\in \N}{\bigoplus}}Ext^{*-j}_{\F(\gr; \kk)}(\abe^{\otimes l} \otimes \Lambda^j  \abe, \abe^{\otimes q}).$$
The subPROP $\Ext_0$ of $\Ext$ given by 
$$\Ext_0(q,l)=Ext^*_{\F(\gr; \kk)}(\abe^{\otimes l} , \abe^{\otimes q})$$
has been studied by the second author: in particular, by \cite[Proposition 3.5]{Vespa} the PROP $\Ext_0$ is generated by its underlying operad $\mathcal{P}_0$. In Proposition \ref{operade-gr} we give an explicit description of this operad by generators and relations. A more conceptual description of the operad  $\mathcal{P}_0$ is the following:

\begin{PROP}[Proposition \ref{operad-sus}]
The operad $\mathcal{P}_0$ is the operadic suspension of the operad $\mathcal{C}om$ of  \textit{non-unital} commutative algebras.
\end{PROP}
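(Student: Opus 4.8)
The plan is to compare explicit presentations. First I would record the underlying graded symmetric sequence of $\mathcal{P}_0$. Recall that the underlying operad of the PROP $\Ext_0$ is $\mathcal{P}_0(n) = \Ext_0(1,n) = Ext^*_{\F(\gr;\kk)}(\abe^{\otimes n}, \abe)$. By the computation recalled in Proposition \ref{Ext2} (from \cite{Vespa}), this $Ext$-group is one-dimensional, concentrated in cohomological degree $n-1$, and carries the sign representation of $\mathfrak{S}_n$. On the other hand, with the standard convention $(\mathscr{S}\mathcal{C}om)(n) = \Sigma^{n-1}\mathcal{C}om(n) \otimes \mathrm{sgn}_n$ for the operadic suspension, and since $\mathcal{C}om(n) = \kk$ sits in degree $0$ with trivial $\mathfrak{S}_n$-action for $n \geq 1$ while $\mathcal{C}om(0) = 0$ in the non-unital case, the sequence $(\mathscr{S}\mathcal{C}om)(n)$ is also one-dimensional, placed in degree $n-1$, with the sign $\mathfrak{S}_n$-action. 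Thus $\mathcal{P}_0$ and $\mathscr{S}\mathcal{C}om$ have identical underlying graded symmetric sequences, and it remains only to match the operad structures.

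Next I would pass to presentations. The operad $\mathcal{C}om$ is the quadratic operad generated by a single symmetric binary operation subject to the associativity relation; applying the operadic suspension term by term, $\mathscr{S}\mathcal{C}om$ is generated by one binary operation $\nu$ now placed in degree $1$ and carrying the sign representation (so $\nu$ is graded anti-symmetric, $\nu^{(12)} = -\nu$), subject to the Koszul-signed suspension of the associativity relation. I would then invoke the explicit presentation of $\mathcal{P}_0$ given in Proposition \ref{operade-gr} and observe that it has exactly the same shape: one binary generator in degree $1$ with the sign action and a single quadratic relation. The isomorphism $\mathcal{P}_0 \cong \mathscr{S}\mathcal{C}om$ is then obtained by sending generator to generator; because each arity component is one-dimensional, the whole operadic composition is determined by its binary part, so matching the generator and the single quadratic relation forces an isomorphism of operads.

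The hard part will be the sign bookkeeping: one must check that the sign of the $\mathfrak{S}_2$-action on the binary generator and the sign appearing in the ternary relation of Proposition \ref{operade-gr} coincide with the Koszul signs produced by the operadic suspension of $\mathcal{C}om$. I would verify this by computing the operadic composition of the generating $Ext^1$-class with itself through an explicit projective resolution of $\abe^{\otimes n}$ in $\F(\gr;\kk)$, so that the Yoneda product directly exhibits the shifted-associativity relation together with its sign. An alternative and more structural route, which avoids cocycle-level computations, is to apply the operadic desuspension $\mathscr{S}^{-1}$ to $\mathcal{P}_0$: the result is a symmetric sequence that is one-dimensional and concentrated in degree $0$ with trivial $\mathfrak{S}_n$-action in each arity $n \geq 1$, and whose composition maps are nonzero by Proposition \ref{operade-gr}. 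Any such operad is necessarily isomorphic to the non-unital $\mathcal{C}om$, since once the composites are nonzero the structure constants can all be normalized to $1$ by rescaling the one-dimensional components, whence $\mathcal{P}_0 \cong \mathscr{S}\mathcal{C}om$. In either approach the single genuinely substantive input beyond the $Ext$-computation is the non-vanishing of these composites, which is precisely what the generators-and-relations description of Proposition \ref{operade-gr} provides.
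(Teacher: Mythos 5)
Your proposal is correct, and its skeleton coincides with the paper's proof: the paper likewise first matches the underlying symmetric sequences (using that $\mathcal{P}_0(n)$ is the sign representation concentrated in degree $1-n$ and that $\mathcal{C}om(0)=0$, so $\mathcal{P}_0(n)\simeq \mathcal{S}(n)$ for $n\geq 1$), and then invokes the quadratic presentation of Proposition \ref{operade-gr} to send the generator $\mu$ to the generator $\nu$ of $\mathcal{S}(2)$. The only genuine difference is how the remaining sign verification is finished. The paper checks directly that $\nu$ satisfies $\nu\underset{1}{\circ}\nu=-\nu\underset{2}{\circ}\nu$ by computing inside the endomorphism operad of the graded vector space $s\kk$, mirroring the argument of Proposition \ref{operade-gr}; since both operads are then quotients of $\mathcal{Q}$ with one-dimensional arity components, the generator-to-generator map is an isomorphism. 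Your first suggested verification --- recomputing the Yoneda product of the $Ext^1$-class with itself through a projective resolution --- is misdirected as stated: that computation is exactly the content of Proposition \ref{operade-gr}, which you are already citing, whereas the sign that still needs checking lives on the \emph{suspension} side, i.e.\ one must identify the Koszul-signed relation in $\mathcal{S}\underset{H}{\otimes}\mathcal{C}om$, not in $\mathcal{P}_0$. Your alternative structural route, however, is valid and arguably cleaner than the paper's: desuspending $\mathcal{P}_0$ yields an operad concentrated in degree $0$ with trivial $\mathfrak{S}_n$-action (sign tensor sign) on one-dimensional components and nonzero composites; equivariance then forces the binary generator to be symmetric and associative automatically (the two ternary composites are related by a permutation acting trivially), so the generator-to-generator map from the non-unital $\mathcal{C}om$ is a surjection between arity-wise one-dimensional spaces, hence an isomorphism. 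This buys you freedom from all Koszul sign bookkeeping, at the price of the small rigidity argument for one-dimensional operads, which you should spell out along the lines just indicated rather than appeal only to ``rescaling structure constants.''
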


The previous results on $\Ext_0$ and $\mathcal{P}_0$ are also true for $\kk=\Z$.

The forgetful functor from wheeled PROP to operads has a left adjoint. We denote by
$\C_{\mathcal{P}_0^{\circlearrowright}}$ the wheeled PROP associated to the operad $\mathcal{P}_0$ by this functor.
We obtain the following result
\begin{PROP} [Proposition \ref{PROP-E-explicite}]  \label{Prop2-intro}
There is an isomorphism of PROPs
$$\chi: \C_{\mathcal{P}_0^{\circlearrowright}} \xrightarrow{\simeq} \Ext.$$

\end{PROP}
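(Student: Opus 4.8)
The plan is to obtain $\chi$ from the universal property of the free wheeled PROP and then to check bijectivity in each biarity $(q,l)$ by comparing an explicit combinatorial description of the source with the functor-homology computation of the target. First I would record that $\Ext$ is itself a wheeled PROP: its contraction $\xi^{i}_{j_0}$ folds the $i$-th output copy of $\abe$ together with the $j_0$-th input copy of $\abe$ into one further loop variable, sending the $j$-th summand of $\Ext(q,l)$ to the $(j+1)$-st summand of $\Ext(q-1,l-1)$ (so enlarging $\Lambda^{j}\abe$ to $\Lambda^{j+1}\abe$ and shifting the homological grading), the wheeled-PROP axioms being verified exactly as for $\Hst$ in Proposition~\ref{H-wP}. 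Since the $j=0$ part of $\Ext(1,\bullet)$ is by definition the operad $\mathcal{P}_0$, the inclusion $\mathcal{P}_0\hookrightarrow\Ext$ into the underlying operad, together with the adjunction between the forgetful functor and its left adjoint (applied to $\mathcal{P}_0$), furnishes a unique morphism of wheeled PROPs $\chi\colon\C_{\mathcal{P}_0^{\circlearrowright}}\to\Ext$ restricting to the identity on $\mathcal{P}_0$.

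Next I would describe the source by normal forms. An element of $\C_{\mathcal{P}_0^{\circlearrowright}}(q,l)$ is an isomorphism class of directed graphs whose internal vertices carry $\mathcal{P}_0$-operations, each with a single output; as every vertex then has out-degree at most one, each connected component contains at most one oriented cycle, and operadic composition collapses every maximal tree to a single vertex. A normal form therefore consists of $q$ corollas (the tree components, one free output each) together with $j\geq 0$ one-loop graphs (the wheel components, carrying no free output), with the free inputs totalling $l$. Using that $\mathcal{P}_0$ is the operadic suspension of $\mathcal{C}om$ (Proposition~\ref{operad-sus}), the tree part reproduces the free PROP on $\mathcal{P}_0$, which by \cite[Proposition~3.5]{Vespa} is $\Ext_0$ and matches the $j=0$ summand of $\Ext$. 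Each wheel component reduces to a single $\mathcal{P}_0$-operation with its output contracted into one of its inputs, so the $j$-wheel part is built from $j$ such one-wheels; the symmetric group $\mathfrak{S}_j$ permuting these wheels acts with a sign dictated by the suspension, and pinning down this sign is what should produce an exterior rather than a symmetric power.

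Finally I would match the normal form of $\C_{\mathcal{P}_0^{\circlearrowright}}(q,l)$ against $\Ext(q,l)=\bigoplus_{j}Ext^{*-j}_{\F(\gr;\kk)}(\abe^{\otimes l}\otimes\Lambda^{j}\abe,\abe^{\otimes q})$ summand by summand, the factor $\Lambda^{j}\abe$ and the shift $-j$ recording the $j$ wheels. For $j=0$ this is the generation statement just cited. For $j\geq 1$ I would substitute the explicit values of $Ext^{*-j}(\abe^{\otimes l}\otimes\Lambda^{j}\abe,\abe^{\otimes q})$ furnished by Proposition~\ref{Ext2} and verify that $\chi$ carries the $j$-wheel normal forms onto this summand bijectively and $\mathfrak{S}_q\times\mathfrak{S}_l$-equivariantly. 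I expect the main obstacle to be exactly this $j\geq 1$ comparison: on the one hand proving that $\Ext$ satisfies no relations beyond the wheeled-PROP axioms, so that $\chi$ is injective, and on the other that iterated contractions already exhaust each $\Lambda^{j}$-summand, so that $\chi$ is surjective. Both come down to reconciling the $\mathfrak{S}_j$-sign carried by the $j$ suspended wheels on the source with the exterior power $\Lambda^{j}\abe$ on the target; here the sign representation underlying $\mathcal{P}_0$ (Proposition~\ref{operad-sus}) must be matched line by line against the $Ext$-computation of Proposition~\ref{Ext2}.
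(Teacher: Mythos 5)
Your opening step contains the decisive gap, and it is a circularity rather than a routine omission. You assert that $\Ext$ is ``itself a wheeled PROP'' with contractions $\xi^i_{j_0}$ obtained by folding an output copy of $\abe$ into an input copy, ``the wheeled-PROP axioms being verified exactly as for $\Hst$ in Proposition~\ref{H-wP}.'' This cannot work: the contractions on $\Hst$ are induced by an actual map of coefficient modules, the partial trace $\varphi^i_j:(V^*)^{\otimes l}\otimes V^{\otimes q}\to (V^*)^{\otimes l-1}\otimes V^{\otimes q-1}$ of Example~\ref{ex-wP}, which exists because the inputs appear dualized. In $\Ext(q,l)$ both arguments of the $Ext$-groups are covariant functors on $\gr$, and there is no natural transformation in $\F(\gr;\kk)$ realizing such a folding; in particular your map from the $j$-th summand $s^{-j}Ext^*(\abe^{\otimes l}\otimes\Lambda^j\abe,\abe^{\otimes q})$ to the $(j+1)$-st summand of $\Ext(q-1,l-1)$ has no direct construction. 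This is exactly the point of Remark~\ref{w-on-E}: the wheeled structure on $\Ext$ is ``quite surprising,'' exists only because the Yoneda product with the generator of $Ext^1_{\F(\gr;\kk)}(\abe,\abe^{\otimes 2})$ happens to be an isomorphism (so one can invert it), and in the paper it is \emph{inherited from} the isomorphism $\chi$ of Theorem~\ref{PROP-E-explicite} rather than being available beforehand. Since your construction of $\chi$ via the adjunction $(-)^{\circlearrowright}\dashv F$ takes the wheeled structure on $\Ext$ as input, your argument assumes what is ultimately the theorem's main output. The paper instead defines $\chi$ directly on the generators $\mu\mapsto[\pi^{\otimes 2}]$ and $\bar\mu=\xi^1(\mu)\mapsto[\pi]$, checks the quadratic relation via Proposition~\ref{operade-gr}, and then compares the formula of Proposition~\ref{C_P} with the computation of Proposition~\ref{E(q,l)}.

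Two further points. First, Proposition~\ref{Ext2} computes $Ext^*_{\F(\gr;\kk)}(\Lambda^j\abe,\abe^{\otimes q})$ and $Ext^*_{\F(\gr;\kk)}(\Lambda^n\abe,\Lambda^m\abe)$, not the mixed groups $Ext^*_{\F(\gr;\kk)}(\abe^{\otimes l}\otimes\Lambda^j\abe,\abe^{\otimes q})$ you plan to substitute; the mixed case requires the adjunction between $\delta_2^*$ and $\pi_2^*$ together with a K\"unneth argument, which is precisely the content of the lemma proving Proposition~\ref{E(q,l)}. Second, the sign bookkeeping you defer (``reconciling the $\mathfrak{S}_j$-sign carried by the $j$ suspended wheels with $\Lambda^j\abe$'') is not a peripheral check but the mathematical substance that makes the wheeled part one-dimensional in each arity with the correct equivariance; the paper supplies it through the explicit contractions $\xi^i([\pi^{\otimes n}])=(-1)^i[\pi^{\otimes n-1}]$ of Definition~\ref{contr-def}, their equivariance in Proposition~\ref{contr-equ}, and the identification $\mathcal{P}\simeq\mathcal{Q}^{\circlearrowright}$ of Propositions~\ref{wheeled-completion-Q} and~\ref{wheeled-completion}. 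Your normal-form analysis of $\C_{\mathcal{P}_0^{\circlearrowright}}(q,l)$ (trees collapse to corollas, each component carries at most one wheel, each wheel collapses to a single contracted vertex) is sound and agrees with Proposition~\ref{C_P}, so the skeleton of your comparison is the right one; but as written the proof both starts from an unjustified premise and leaves open the step that carries the actual content.
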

In particular, $\Ext$ inherits a structure of \textit{wheeled} PROP via this isomorphism. The existence of a wheeled structure on the PROP $\Ext$ is quite surprising and is very specific to the situation studied in this paper (see Remark \ref{w-on-E}).
We deduce from Proposition \ref{Prop2-intro} a description of the wheeled PROP $\Ext$ by generators and relations.

The main result of this paper is the following :
\begin{THM} [Theorem \ref{phi}]
There is a morphism of wheeled PROPs 
$$\phi: \Ext \to \Hst$$ 
such that $\phi(\Ext) \simeq \mathcal{K}$.
\end{THM}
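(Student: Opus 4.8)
The plan is to exploit the presentation of $\Ext$ as a free wheeled PROP. By Proposition \ref{Prop2-intro} the isomorphism $\chi : \C_{\mathcal{P}_0^{\circlearrowright}} \xrightarrow{\simeq} \Ext$ identifies $\Ext$ with the wheeled PROP freely generated by the operad $\mathcal{P}_0$. Since the free wheeled PROP functor $\C_{(-)^{\circlearrowright}}$ is by construction left adjoint to the forgetful functor $U$ from wheeled PROPs to operads, a morphism of wheeled PROPs $\Ext \to \Hst$ is the same datum as a morphism of operads $\mathcal{P}_0 \to U\Hst$. Thus, granting Proposition \ref{H-wP} which equips $\Hst$ with its wheeled structure, it suffices to produce this operad morphism: the resulting $\phi$ will then be \emph{automatically} compatible with the horizontal and vertical compositions and with the contraction maps $\xi_j^i$, so that no separate verification of the wheeled-PROP axioms is needed.

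First I would define the operad morphism $\psi : \mathcal{P}_0 \to U\Hst$. By Proposition \ref{operad-sus}, $\mathcal{P}_0$ is the operadic suspension of $\mathcal{C}om$; in particular, using the explicit presentation of Proposition \ref{operade-gr}, it is generated by a single arity-two operation $\mu$ sitting in cohomological degree one and carrying the sign representation of $\mathfrak{S}_2$. I would set $\psi(\mu) := h_1$, viewing $h_1 \in \Hst(2,1) = H^1(Aut(\Z^{*n}); Hom_{\V}(\kk^n, (\kk^n)^{\otimes 2}))$ as the relevant arity-two component of $U\Hst$. To see that $\psi$ is well defined I must check that $h_1$ satisfies the defining relations of the operadic suspension of $\mathcal{C}om$: namely that $h_1$ is antisymmetric under the $\mathfrak{S}_2$-action permuting the two output tensor factors (the suspended commutativity relation), and that the two iterated composites of $h_1$ with itself agree up to the Koszul sign (the suspended associativity relation) inside $\Hst(3,1) = H^2(Aut(\Z^{*n}); Hom_{\V}(\kk^n, (\kk^n)^{\otimes 3}))$.

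The antisymmetry should be read off directly from the explicit cocycle representing $h_1$ arising from the first author's Magnus-expansion construction in \cite{Kawa-Magnus}. The associativity relation is the heart of the argument and the step I expect to be the \emph{main obstacle}: I would verify it by forming both admissible composites of $h_1$ (one $h_1$ composed vertically against a cup product $h_1 \cup h_1$ in the two possible ways) at the level of cochains and showing that they represent the same stable cohomology class, equivalently that each coincides with the class $h_2$ whose very well-definedness is this identity. Here the compatibility of the cup product with stabilization and the concrete form of the cocycle are essential, and the degree-one shift built into the operadic suspension is precisely what produces the sign appearing in the Koszul relation.

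Finally, $\psi$ extends uniquely, by the adjunction, to a morphism of wheeled PROPs $\phi : \Ext \to \Hst$, and it remains to identify its image. Since $\Ext \cong \C_{\mathcal{P}_0^{\circlearrowright}}$ is generated as a wheeled PROP by $\mathcal{P}_0$, which in turn is generated by the single operation $\mu$, the image $\phi(\Ext)$ is the smallest sub-wheeled PROP of $\Hst$ containing $\phi(\mu) = h_1$. By definition this is exactly the sub-wheeled PROP $\mathcal{K}$ generated by $h_1$, whence $\phi(\Ext) = \mathcal{K}$, yielding the desired conclusion.
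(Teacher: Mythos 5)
Your proposal is correct and follows essentially the same route as the paper: both use the identification of $\Ext$ with the free wheeled PROP on $\mathcal{P}_0$ (Theorem \ref{PROP-E-explicite} together with Proposition \ref{wheeled-completion}) to define $\phi$ on the generator by $\mu \mapsto h_1$, reduce everything to the antisymmetry of $h_1$ and the quadratic relation $h_1\circ(h_1\otimes 1)=-h_1\circ(1\otimes h_1)$, and identify $\phi(\Ext)$ with the sub-wheeled PROP $\mathcal{K}$ generated by $h_1$. The only differences are cosmetic: the step you flag as the main obstacle requires no new cochain-level verification, since it is exactly \cite[(4.11)]{Kawa-Magnus}, recalled as relation (\ref{relation-Kawa}); and where you invoke the adjunction to get compatibility with contractions for free, the paper instead defines $\phi$ on both generators $\mu$ and $\bar{\mu}=\xi^1(\mu)$ and checks the contraction diagrams explicitly via $\xi^i_1(h_1)=(-1)^{i+1}\overline{h_1}$ (Proposition \ref{Prop-Kawa} and Remark \ref{xi^2}).
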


Let $\Ext_w$ (resp. $\Hst'$) be the subPROP of $\Ext$ (resp. $\Hst$) keeping only the morphisms to $0$ and the endomorphisms in degree $0$ in $\Ext$ (resp. $\Hst$).  Djament's result can be rephrased in the following way:
\begin{PROP} \cite[Th\'eor\`eme 4]{Djament} 
By restriction, $\phi$ induces an isomorphism of PROPs: $\phi': \Ext_w \xrightarrow{\simeq} \Hst'$. 
\end{PROP}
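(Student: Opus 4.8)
The plan is to read this statement off the isomorphism (\ref{iso-Dja}) of \cite[Th\'eor\`eme 4]{Djament}, once the PROP structures on both sides have been matched. First I would describe the two subPROPs concretely: $\Ext_w$ (resp. $\Hst'$) is generated, as a subPROP, by the ``morphisms to $0$'' $\Ext(q,0)=\bigoplus_{j\in\N}Ext^{*-j}_{\F(\gr;\kk)}(\Lambda^j\abe,\abe^{\otimes q})$ (resp. $\Hst(q,0)=H^*_{st}(B_{0,q})$) for $q\in\N$, together with the degree-zero endomorphisms. Since $\phi$ is a morphism of wheeled PROPs (Theorem \ref{phi}) it preserves cohomological degree and the bidegree $(q,l)$, and is compatible with horizontal and vertical composition and with the symmetric group actions; so it carries each defining component of $\Ext_w$ into the corresponding component of $\Hst'$, whence $\phi(\Ext_w)\subseteq\Hst'$, and it suffices to check that $\phi$ is bijective on these generating components.

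Next I would treat the two families of generators separately. For the degree-zero endomorphisms a direct computation identifies both sides with the symmetric group algebra: $\Ext(q,q)_0=\hom_{\F(\gr;\kk)}(\abe^{\otimes q},\abe^{\otimes q})\cong\kk[\mathfrak{S}_q]$, while $\Hst(q,q)_0=H^0_{st}(B_{q,q})$ is the algebra of stable invariants of $\hom_{\V}((\kk^n)^{\otimes q},(\kk^n)^{\otimes q})$, which is again $\kk[\mathfrak{S}_q]$ in the stable range by Schur--Weyl duality. One checks from the construction of $\phi$ that it sends the natural transformation associated with a permutation to the corresponding permutation endomorphism, so $\phi$ is the identity on $\kk[\mathfrak{S}_q]$ and hence bijective on this family.

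The substantial point is the family of morphisms to $0$. Here I would verify that the restriction $\phi|_{\Ext(q,0)}\colon\Ext(q,0)\to\Hst(q,0)$ coincides with the natural isomorphism (\ref{iso-Dja}) of \cite[Th\'eor\`eme 4]{Djament}; bijectivity is then exactly Djament's theorem. This is compatible with surjectivity onto $\Hst'$: by the computation of $\Hst(q,0)$ recalled in Proposition \ref{Ext2} these groups are generated under horizontal composition by the classes $\overline{h_p}$, which lie in $\mathcal{K}=\phi(\Ext)$ (Theorem \ref{phi}), so $\Hst'\subseteq\mathcal{K}$ and every generator of $\Hst'$ is a $\phi$-image.

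Putting the pieces together, $\phi'$ is a morphism of PROPs (a restriction of the wheeled PROP morphism $\phi$), it is bijective on the generating components of $\Ext_w$ by the two previous steps, and both subPROPs are generated by these components together with their symmetric structure; therefore $\phi'$ is an isomorphism of PROPs. The main obstacle is the identification in the third step, namely showing that the map $\phi$ built from the class $h_1$ and the wheeled PROP operations agrees on the nose with the abstract natural isomorphism of \cite{Djament}; this is where the actual content of Djament's theorem enters, and where care is needed to match the two constructions through the functor-homology description of $H^*_{st}(B_{0,q})$.
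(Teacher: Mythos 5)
Your proposal is correct and follows essentially the same route as the paper: restrict $\phi$, match the degree-zero endomorphism parts (both $\kk[\mathfrak{S}_n]$, on which $\phi$ is the identity), and reduce bijectivity on the components $\Ext(q,0)\to\Hst(q,0)$ to Djament's Th\'eor\`eme 4. The ``substantial point'' you flag---that $\phi_{q,0}$ agrees with Djament's abstract isomorphism---is exactly where the paper's content lies, and it is settled there by citing \cite[Corollaire 3.7]{Djament}, which states that Djament's isomorphism is induced by the morphism $\phi_{q,0}$; your auxiliary surjectivity remark via the classes $\overline{h_p}$ is redundant (and itself rests on Djament's isomorphism) but harmless.
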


In \cite[Th\'eor\`eme 7.4]{Djament} Djament conjectures  that there exist  graded isomorphisms: 
$$\overset{q-l}{\underset{j=0}{\bigoplus}}s^{-j}Ext^*_{\F(\gr; \kk)}(\abe^{\otimes l} \otimes \Lambda^j  \abe, \abe^{\otimes q}) \simeq H^*_{st}(Hom_{\mathcal{V}}(\abe^{\otimes l}, \abe^{\otimes q} )).$$

Natural candidates for maps giving these isomorphisms are the maps 
$$
\phi_{q,l}: \overset{q-l}{\underset{j=0}{\bigoplus}}s^{-j}Ext^*_{\F(\gr; \kk)}(\abe^{\otimes l} \otimes \Lambda^j  \abe, \abe^{\otimes q}) \to H^*_{st}(Hom_{\mathcal{V}}(\abe^{\otimes l}, \abe^{\otimes q} ))
$$
given by the functor $\phi$. By functoriality these maps are compatible with horizontal and vertical compositions in the PROPs and with the contractions.

Djament's conjecture can be rephrased in the following way:

\begin{CONJ}
The morphism $\phi$ is an isomorphism of wheeled PROPs.
\end{CONJ}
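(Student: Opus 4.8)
The plan is to reduce the conjecture to a componentwise statement, to dispose of the boundary pieces using the results already available, and then to propagate to arbitrary arities using the explicit presentation of $\Ext$ as a free wheeled PROP.

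First, since $\phi$ is a morphism of wheeled PROPs, it is an isomorphism if and only if each component $\phi_{q,l}\colon \Ext(q,l)\to \Hst(q,l)$ is an isomorphism of graded $(\mathfrak{S}_q,\mathfrak{S}_l)$-bimodules. By Theorem \ref{phi} we have $\phi(\Ext)\simeq \mathcal{K}$, so surjectivity of $\phi$ is equivalent to the equality $\mathcal{K}=\Hst$, i.e.\ to the assertion that the class $h_1$ generates the whole wheeled PROP $\Hst$; injectivity is then equivalent to the statement that the only wheeled-PROP relations satisfied by $h_1$ in $\Hst$ are those already holding in $\Ext$.

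Next I would record what is already known. Since $\phi'\colon \Ext_w\xrightarrow{\simeq}\Hst'$ is an isomorphism and $\Hst(0,l)=0$ for $l>0$ by \cite{DV2}, the components $\phi_{0,l}$ are isomorphisms for all $l$. On the other boundary, Djament's isomorphism \eqref{iso-Dja} identifies $\Hst(q,0)$ with $\Ext(q,0)$, and $\phi_{q,0}$ is the natural candidate realising it, so the conjecture at $l=0$ reduces to checking that $\phi_{q,0}$ is precisely this isomorphism. One is thus left with the interior range $q,l>0$. To reach it I would exploit the presentation $\chi\colon \C_{\mathcal{P}_0^{\circlearrowright}}\xrightarrow{\simeq}\Ext$ of Proposition \ref{PROP-E-explicite}, which exhibits $\Ext$ as the free wheeled PROP on the operad $\mathcal{P}_0$. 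Every element of $\Ext(q,l)$ is then an iterated horizontal composition, vertical composition, and contraction of copies of the single generator corresponding to $h_1$; since $\phi$ commutes with all three operations, surjectivity of $\phi_{q,l}$ amounts to producing an arbitrary stable class out of $h_1$, while injectivity amounts to checking that a word in $h_1$ that is nonzero in $\C_{\mathcal{P}_0^{\circlearrowright}}$ stays nonzero in $\Hst$. A natural organisation is an induction on $l$: starting from the case $l=0$, one tries to prove that the contractions $\xi_j^i\colon \Hst(q,l)\to \Hst(q-1,l-1)$, together with horizontal composition by the generator, detect and produce every class in $\Hst(q,l)$ in exactly the same pattern as in the free wheeled PROP.

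The hard part will be surjectivity, namely the equality $\mathcal{K}=\Hst$; as noted above, it is not even known whether the inclusion $\mathcal{K}\hookrightarrow \Hst$ is strict. Proving it appears to require a complete computation of $\Hst(q,l)$ for all $(q,l)$, extending Djament's $l=0$ computation to all input arities, presumably by functor-homology methods in $\F(\gr;\kk)$ in the spirit of \cite{DV2} and \cite{Djament}: one would seek a spectral sequence or an exact-sequence argument reducing $Ext^*_{\F(\gr;\kk)}(\abe^{\otimes l}\otimes \Lambda^j\abe,\abe^{\otimes q})$ to the pieces already computed, while simultaneously pinning down the dimension of $\Hst(q,l)$ in each multidegree. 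With such a computation in hand, injectivity would follow from a dimension count in each multidegree against the explicit description of $\Ext(q,l)$ coming from $\C_{\mathcal{P}_0^{\circlearrowright}}$, and surjectivity from comparing the two computations degree by degree.
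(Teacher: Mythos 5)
The statement you are addressing is labelled a \emph{Conjecture} in the paper, and the paper contains no proof of it: it is an open problem (a rephrasing of Djament's conjecture), and the authors explicitly state in the introduction that it is not even known whether the inclusion $\mathcal{K}\hookrightarrow\Hst$ is strict, i.e.\ whether $\phi$ is surjective. Your proposal is internally honest about this, but as a result it is not a proof --- it is a reduction of the conjecture to itself. The preparatory steps you take are all correct and are exactly what the paper establishes: the componentwise reformulation is legitimate; the boundary cases are settled, since $\phi(0,l)$ is an isomorphism because both sides vanish for $l>0$ (by \cite{DV2} on the $\Hst$ side and Theorem \ref{Ext} on the $\Ext$ side), and $\phi_{q,0}$ is an isomorphism by Djament's theorem together with \cite[Corollaire 3.7]{Djament} (Theorem \ref{Djament} in the paper); and the free presentation $\chi:\C_{\mathcal{P}_0^{\circlearrowright}}\xrightarrow{\simeq}\Ext$ of Theorem \ref{PROP-E-explicite} does reduce everything to the behaviour of words in $h_1$ inside $\Hst$. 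But the step that carries all the content --- ``a spectral sequence or exact-sequence argument \ldots pinning down the dimension of $\Hst(q,l)$ in each multidegree'' for $q,l>0$ --- is precisely the unknown computation. No such computation exists in the literature cited by the paper (the functor-homology results of \cite{DV2} and \cite{Djament} cover only the cases $l=0$ or $q=0$), and nothing in your outline indicates how the induction on $l$ via contractions would actually produce or detect classes in $\Hst(q,l)$: the contractions $\xi^i_j$ map $\Hst(q,l)\to\Hst(q-1,l-1)$, i.e.\ \emph{out of} the unknown range toward the known one, and there is no a priori injectivity or surjectivity statement for them that would let you lift knowledge of $\Hst(q-l,0)$ back up.

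Concretely, then, the gap is this: your argument assumes, in the phrase ``with such a computation in hand,'' exactly the statement to be proved. Both surjectivity (that $h_1$ generates $\Hst$) and injectivity (that the only relations on $h_1$ are the antisymmetry and the quadratic relation $\mu(\mu\otimes 1)=-\mu(1\otimes\mu)$ visible in $\C_{\mathcal{P}_0^{\circlearrowright}}$) are open; note moreover that injectivity does not formally follow from surjectivity plus ``dimension count'' unless one has the dimensions of $\Hst(q,l)$ in every multidegree, which is again the open computation. Your write-up is a reasonable research program, and it matches the state of the art as presented in the paper, but it should not be presented as a proof attempt: the correct conclusion is that the statement remains a conjecture, with the boundary cases and the compatibility of $\phi$ with the wheeled structure (Theorem \ref{phi}) being the established evidence for it.
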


This conjecture would imply that the inclusion functor $\mathcal{K} \hookrightarrow \Hst$ is an equivalence of categories i.e. the class $h_1$ would generate the wheeled PROP $\Hst$.

\vspace{.3cm}
\textbf{Acknowledgements: }
The authors are grateful to the CNRS and the JSPS for their support through the project "Cohomological study of mapping class groups and related topics" managed by the two authors. The results of this paper form part of this project.

The authors would like to thank Vladimir Dotsenko for suggesting using \textit{wheeled} PROPs and operads after a talk on this work given in Strasbourg, where the results were given in terms of PROPs and operads. They are also grateful to Geoffrey Powell for useful comments on previous versions of this paper, in particular on Sections \ref{explicit-classes} and \ref{operad-P0}.

The second author would like to warmly thank  the University of Tokyo for the invitation between April and June 2019 where this project was achieved and where part of this paper was written.

The first author was supported in part by the grant JSPS KAKENHI 18KK0071, 20H00115, 18K03283 and 19H01784.
The second author was partially supported by the ANR Project Chrok, ANR-16-CE40-0003, the ANR Project AlMaRe ANR-19-CE40-0001-01 and the ANR Project HighAGT ANR-20-CE40-0016.
\tableofcontents

\textbf{Notation:} We denote by $\mathbb{N}=\{0, 1, \ldots \}$ and by $\textbf{q}$ the set $\{1, \ldots, q\}$.\\
In all the paper, $\kk$ is a commutative ring which will be, most of the time, a field of characteristic zero or $\Z$. We denote by $\mathcal{V}$ the category of $\kk$-modules and $\mathcal{V}^f$ its full subcategory of free finitely-generated modules.

A \textit{homological} $\Z$-graded $\kk$-module is denoted by  $V_\bullet=\bigoplus _nV_n$ and a morphism \textit{of homological degree $d$}: $f:V_\bullet \to W_\bullet$ is a family of linear maps $f_n: V_n \to W_{n+d}$ for all $n \in \Z$. To $V_\bullet$ a homological $\Z$-graded $\kk$-module, we associate a \textit{cohomological} $\Z$-graded $\kk$-module $V^\bullet$ by $V^n:=V_{-n}$. A morphism of homological degree $d$ 
corresponds to a morphism of \textit{cohomological degree $-d$}. 

Graded $\kk$-modules and morphisms of degree $0$ form a category denoted by $gr\mathcal{V}$. 
For $\otimes $ the tensor product of $\Z$-graded $\kk$-modules, the category $(gr\mathcal{V}, \otimes, \kk)$ is equipped with the symmetry given by the maps $\tau: V \otimes W \to W\otimes V$ defined by $\tau(v \otimes w):=(-1)^{pq} w \otimes v$ where $v \in V_p$ and $w \in W_q$.

For $V$ a $\kk$-module we denote again by $V$ the graded $\kk$-module concentrated in degree $0$ where it is equal to $V$.

Let $\kk s$ be the graded $\kk$-module concentrated in degree one and such that $(\kk s)_1$ is spanned by $s$; the \textit{suspension} of a graded $\kk$-module $V$ is $sV:=\kk s \otimes V$ so that  $(sV)_i=V_{i-1}$.

The duality functor, denoted by $-^*: (\mathcal{V})^{op} \to \mathcal{V}$ is defined by $Hom_{\mathcal{V}}(-,\kk)$.

Non-specified tensor products are taken over $\kk$.

For $C,C'$ objects of a category $\C$, the set of morphisms from $C$ to $C'$ is denoted by $Hom_\C(C,C')$ or $\C(C,C')$.

\part{Recollections}
\section{Recollections on PROPs and operads}
PROPs and operads arose in the work of Mac Lane \cite{ML65}. Since then, they have turned out to be very important algebraic structures, especially in algebraic topology.

In this section we recall some basic facts that we will use in the paper on  PROPs and operads, as well as  their wheeled versions introduced more recently by Markl, Merkulov and Shadrin  \cite{MMS}.

\subsection{Classical PROPs and operads}

For PROPs, we refer the reader to \cite{Markl} for further details. The notion of PROP is closely related to the notion of operad. For operads, we refer the reader to \cite{LV}.

A PROP is a symmetric monoidal category $(\C, \otimes, 1)$ with objects the natural numbers whose symmetric monoidal structure $\otimes$ is given by the sum of integers on objects.

In this paper we will consider PROPs enriched over $gr\mathcal{V}$, called graded linear PROPs. Such a PROP $\C$ is a collection $\{\C(m,n)\}_{m,n\in \mathbb{N}}$ of graded $(\mathfrak{S}_m, \mathfrak{S}_n)$-bimodules (i.e. graded left $\mathfrak{S}_m \otimes (\mathfrak{S}_n)^{op}$-modules) together with two types of compositions, the horizontal composition
$$\C(m_1,n_1)\otimes \C(m_2,n_2) \to \C(m_1+m_2,n_1+n_2),$$
induced by the monoidal product, and the vertical composition
$$ \C(n,l) \otimes \C(m,n) \to \C(m,l)$$
given by the categorical composition.

An \textit{operation of biarity} $(m,n)$ in a PROP $\C$ is an element in $\C(m,n)$.

In the rest of this paper all the operads and PROPs will be \textit{graded linear}. To simplify the terminology we will call them simply operads and PROPs.

An important example of PROP is the endomorphism PROP of a graded $\kk$-module:
\begin{ex} \label{ex-P}
To  an object $V$ of $gr\mathcal{V}$, we associate the PROP, denoted by $\mathcal{E}nd_V$, defined by: 
$$\mathcal{E}nd_V(m,n)=Hom_{gr\mathcal{V}}(V^{\otimes m}, V^{\otimes n})$$ with the action of the symmetric groups given by the action on the tensor product by place permutations. The horizontal composition is given by the tensor product of linear maps and the vertical composition  by the composition in $\mathcal{V}$. 
\end{ex}

Every  PROP $\C$ has an underlying  operad $\mathcal{P}_\C$ given by $\mathcal{P_\C}(n)=Hom_\C(n,1)$.

Conversely, every operad $\mathcal{P}_0$ generates a PROP $\C_{\mathcal{P}_0}$ where 
$$\C_{\mathcal{P}_0}(q,l)= \underset{f: \textbf{q} \to \textbf{l}}{\bigoplus} \ \overset{l}{\underset{i=1}{\bigotimes }}\mathcal{P}_0(f^{-1}(i)).$$

For two PROPs, $\C$ and $\C'$, a morphism of PROPs is a strict monoidal functor $F: \C \to \C'$ which is the identity on the objects and graded linear (i.e. the maps between the Hom-sets are morphisms of degree $0$).

For a PROP $\C$, a morphism in $\C(m,n)$  can be represented by a \textit{directed $(m,n)$-graphs} i.e. finite, not necessary connected, graph such that each edge is equipped with a direction, there are no directed cycles and the set of legs is divided into the set of inputs labeled by $\{1, \ldots, m\}$ and outputs labeled by  $\{1, \ldots, n\}$.

In our pictures the graphs are oriented from top to bottom.

Using the horizontal composition in $\C$, each morphism in $\C$ is the disjoint union of \textit{$(m,n)$-corollas} which are (connected) graphs of the form:

\[
	\vcenter{\hbox{\begin{tikzpicture}[baseline=1.8ex,scale=0.5]
	\draw (0,4) -- (2,2);
	\draw (1,4) -- (2,2);
	\draw (3,4) -- (2,2);
	\draw (4,4) -- (2,2);
	\draw (0,0) -- (2,2);
	\draw (1,0) -- (2,2);
	\draw (3,0) -- (2,2);
	\draw (4,0) -- (2,2);
	\draw (2,3.7) node[above] {$\scriptstyle{\ldots}$};
	\draw (2,0.3) node[below] {$\scriptstyle{\ldots}$};
	\draw (0,4) node[above] {$\scriptstyle{1}$};	
	\draw (1,4) node[above] {$\scriptstyle{2}$};	
	\draw (3,4) node[above] {$\scriptstyle{m-1}$};
	\draw (4,4) node[above] {$\scriptstyle{m}$};
	\draw (0,0) node[below] {$\scriptstyle{1}$};
	\draw (1,0) node[below] {$\scriptstyle{2}$};
	\draw (3,0) node[below] {$\scriptstyle{n-1}$};
	\draw (4,0) node[below] {$\scriptstyle{n}$};
	\draw[fill=white] (2,2) circle (4pt);
	\end{tikzpicture}}}
		\]

For example, the following depicts a morphism in $\C(5,4)$:

\[
	\vcenter{\hbox{\begin{tikzpicture}[baseline=1.8ex,scale=0.5]
	\draw (1,4) -- (2,2);
	\draw (3,4) -- (2,2);
	\draw (2,0) -- (2,2);
	\draw (5,2) -- (5,4);
	\draw (5,2) -- (4,0);
	\draw (5,2) -- (5,0);
	\draw (5,2) -- (6,0);
	\draw (8,2) -- (7,4);
	\draw (8,2) -- (9,4);
	\draw (1,4) node[above] {$\scriptstyle{1}$};	
	\draw (3,4) node[above] {$\scriptstyle{2}$};
	\draw (5,4) node[above] {$\scriptstyle{3}$};
	\draw (7,4) node[above] {$\scriptstyle{4}$};
	\draw (9,4) node[above] {$\scriptstyle{5}$};
	\draw (2,0) node[below] {$\scriptstyle{1}$};
	\draw (4,0) node[below] {$\scriptstyle{2}$};
	\draw (5,0) node[below] {$\scriptstyle{3}$};
	\draw (6,0) node[below] {$\scriptstyle{4}$};
	\draw[fill=white] (2,2) circle (4pt);
	\draw[fill=white] (5,2) circle (4pt);
	\draw[fill=white] (8,2) circle (4pt);
	\end{tikzpicture}}}
		 \]

\subsection{Wheeled PROPs and wheeled operads}
In this section we recall the wheeled versions of PROPs and operads, introduced by Markl, Merkulov and Shadrin in \cite{MMS}, in order to encode algebras equipped with traces. (See Example \ref{ex-wP}).

A wheeled PROP is a PROP equipped with \textit{contractions}
$$\xi_j^i: \C(m,n) \to \C(m-1, n-1)$$
for $1\leq i\leq m$ and $1 \leq j\leq n$. These contractions satisfy compatibility axioms.

For a wheeled PROP $\C$, a morphism in $\C(m,n)$  can be represented by a directed $(m,n)$-graph having possibly wheels and loops.

The contraction $\xi_j^i$ can be viewed as connecting the $i^{th}$ input and the $j^{th}$ output. For example, for a $(m,n)$-corolla we have the following picture:
\[
	\vcenter{\hbox{\begin{tikzpicture}[baseline=1.8ex,scale=0.4]
	\draw (3,6) node{};
	\draw (3,0) node{};
	\draw (0,6) -- (3,3);
	\draw (1,6) -- (3,3);
	\draw (3,6) -- (3,3);
	\draw (5,6) -- (3,3);
	\draw (6,6) -- (3,3);
	\draw (0,0) -- (3,3);
	\draw (1,0) -- (3,3);
	\draw (3,0) -- (3,3);
	\draw (5,0) -- (3,3);
	\draw (6,0) -- (3,3);
	\draw (3,0) to[bend left=-45] (7,-2);
	\draw (7,8) to[bend left=60] (7,-2);
	\draw[->,>=latex]  (7,8) to[bend left=-45] (3,6);
	\draw (2,5.7) node[above] {$\scriptstyle{\ldots}$};
	\draw (4,5.7) node[above] {$\scriptstyle{\ldots}$};
	\draw (2,0.3) node[below] {$\scriptstyle{\ldots}$};
	\draw (4,0.3) node[below] {$\scriptstyle{\ldots}$};
	\draw (0,6) node[above] {$\scriptstyle{1}$};	
	\draw (1,6) node[above] {$\scriptstyle{2}$};	
	\draw (3,6) node[above] {$\scriptstyle{i}$};
	\draw (5,6) node[above] {$\scriptstyle{m-1}$};
	\draw (6,6) node[above] {$\scriptstyle{m}$};
	\draw (0,0) node[below] {$\scriptstyle{1}$};
	\draw (1,0) node[below] {$\scriptstyle{2}$};
	\draw (3,0) node[below] {$\scriptstyle{j}$};
	\draw (5,0) node[below] {$\scriptstyle{n-1}$};
	\draw (6,0) node[below] {$\scriptstyle{n}$};
	\draw[fill=white] (3,3) circle (4pt);
	\end{tikzpicture}}}
		\]

In a wheeled PROP, vertical composition is determined by the horizontal composition and the contractions by the following formula:
\begin{equation}\label{vertical-wP}
 \C(n,l) \otimes \C(m,n) \to \C(n+m,l+n) \xrightarrow{(\xi^1_{l+1})^n} \C(m,l)
 \end{equation}
(see  \cite[(17)]{MMS}).

A fundamental example of a wheeled PROP is the wheeled endomorphism PROP associated with a free finitely-generated $\kk$-module where the contractions are given by the trace map:

\begin{ex} \label{ex-wP}
By classical linear algebra, for objects $E$ and $F$ in $\mathcal{V}$, we have canonical homomorphisms  $E^* \otimes F \to Hom_\V(E,F)$ and $E^* \otimes F^*\to (E \otimes F)^*$ which are isomorphisms if $E$ is a free finitely-generated $\kk$-module. 

For $V$ an object of $\mathcal{V}^f$, 
the previous observations give rive to the isomorphisms:
$$\theta_{m,n}: Hom_{\mathcal{V}^f}(V^{\otimes m}, V^{\otimes n}) \xrightarrow{\simeq}(V^*)^{\otimes m} \otimes V^{\otimes n} $$

By Example \ref{ex-P}, $\mathcal{E}nd_V(m,n)=Hom_{\mathcal{V}}(V^{\otimes m}, V^{\otimes n})$ defines a PROP.
For $1 \leq i \leq m$ and $1 \leq j \leq n$, the contractions $\xi^i_j: \mathcal{E}nd_V(m,n) \to \mathcal{E}nd_V(m-1,n-1)$ correspond through the previous isomorphisms to the maps 
$$\varphi^i_j: (V^*)^{\otimes m} \otimes V^{\otimes n} \to (V^*)^{\otimes m-1} \otimes V^{\otimes n-1}$$ given by:
$$\varphi^i_j(f_1 \otimes \ldots \otimes f_m \otimes v_1 \otimes \ldots \otimes v_n)=f_i(v_j)(f_1 \otimes \ldots f_{i-1} \otimes f_{i+1} \ldots f_m \otimes v_1\otimes  \ldots  v_{j-1} \otimes v_{j+1} \otimes \ldots \otimes v_n)$$
where $f_i \in V^*$ and $v_i \in V$. 

Note that  $\varphi_1^1: V^* \otimes V \to \kk$ is the evaluation and $\varphi_1^1 \circ \theta_{1,1}: Hom_{\mathcal{V}^f}(V, V)  \to \kk$ is the  trace map $Tr$ which associates to an endomorphism of $V$ its trace. 
\end{ex}

A morphism of wheeled PROPs is a morphism of PROPs that is compatible with the contractions.

The forgetful functor from the category of wheeled PROPs to the category of PROPs has a left adjoint denoted by $(-)^{\circlearrowright}$. For $\C$ a PROP, $\C^{\circlearrowright}$ is called the \textit{wheeled completion} of $\C$ (see \cite[Definition 2.1.9]{MMS}).

Recall from \cite[Definition 5.1.1]{MMS}, that a \textit{wheeled operad} $\mathcal{P}=\{ \mathcal{P}(n,m)\}_{m,n}$, where $n\in \N$ and $m\in \{0,1\}$, consists of:
\begin{enumerate}
\item \label{wop-1} an ordinary operad $\mathcal{P}_0:=\{ \mathcal{P}(n,1) \}_{n\geq 0}$;
\item \label{wop-2} a right $\mathcal{P}_0$-module $\mathcal{P}_w:=\{ \mathcal{P}(n,0) \}_{n \geq 0}$;
\item for $1 \leq i \leq n$, contractions $\xi^i: \mathcal{P}_0(n) \to \mathcal{P}_w(n-1)$, satisfying compatibility conditions with the structures given in (\ref{wop-1}) and (\ref{wop-2}).
\end{enumerate}

The operad  $\mathcal{P}_0$ is called the \textit{operadic part} of $\mathcal{P}$ and $\mathcal{P}_w$ its \textit{wheeled} part.

Recall that the operad $\mathcal{P}_0$ is itself a right $\mathcal{P}_0$-module.

Every wheeled PROP $\C$ has an underlying wheeled operad $\mathcal{P}^\C$ where the operadic part is $\mathcal{P}^\C_0=\{Hom_\C(n,1)\}$, the wheeled part is $\mathcal{P}^\C_w=\{Hom_\C(n,0)\}$ and the contractions $\xi^i: \mathcal{P}^\C_0(n) \to \mathcal{P}^\C_w(n-1)$ are the contractions $\xi^i_1:Hom_\C(n,1) \to Hom_\C(n,0)$ of the wheeled PROP.

Conversely, every wheeled operad $\mathcal{P}$ generates a wheeled PROP $\C_\mathcal{P}$. The following explicit description of the  wheeled PROP $\C_\mathcal{P}$ follows from the description of the free wheeled PROP generated by a collection of $(\mathfrak{S}_m, \mathfrak{S}_n)$-bimodules given in \cite[Section 2.1.6]{Merkulov-2} (see also \cite[Section 2.3]{Merkulov-3}).

\begin{prop} \label{C_P}
The wheeled PROP $\C_\mathcal{P}$ associated to a wheeled operad $\mathcal{P}$ is given by the $(\mathfrak{S}_q,\mathfrak{S}_l)$-bimodules
$$\C_\mathcal{P}(q,l)= \underset{\substack{J \subset \textbf{q}\\ f: J\twoheadrightarrow \textbf{l}}}{\bigoplus} \  \underset{\substack{1 \leq k\leq | \textbf{q}\setminus J| \\ X_1 \amalg \ldots \amalg X_{k } =\textbf{q}\setminus J \\min(X_1)< \ldots <min(X_k)}}  {\bigoplus} \  \left(\overset{l}{\underset{i=1}{\bigotimes }}\mathcal{P}_0(f^{-1}(i))\right) \otimes \left(\overset{k }{\underset{i=1}{\bigotimes }}\mathcal{P}_w(X_i)\right) $$
where $J \subset \textbf{q}$, $X_1 \amalg \ldots \amalg X_{k} $ is a partition of $\textbf{q}\setminus J$ into non-empty sets such that: $min(X_1)<\ldots <min(X_k)$.

The symmetric group $\mathfrak{S}_l$ acts by postcomposition on $f: J\to \textbf{l}$ and $\mathfrak{S}_q$ by precomposition on $f: J\to \textbf{l}$ and on the partition $X_1 \amalg \ldots \amalg X_{k} $.

Horizontal composition is induced by disjoint union of maps and partitions. 

The contractions $\xi^i_j: \C_\mathcal{P}(q,l) \to \C_\mathcal{P}(q-1,l-1)$ for $1 \leq i\leq q$, $1 \leq j \leq l$ are induced by the composition in the operad $\mathcal{P}_0$, the  right $\mathcal{P}_0$-module structure on $\mathcal{P}_w$ and the contractions $\xi^i: \mathcal{P}_0(n) \to \mathcal{P}_w(n-1)$. 

More explicitly, on the direct summand of $\C_\mathcal{P}(q,l) $ corresponding to $J \subset \textbf{q}$,  $f: J\twoheadrightarrow \textbf{l}$, $1 \leq k\leq | \textbf{q}\setminus J|$ and $X_1 \amalg \ldots \amalg X_{k } =\textbf{q}\setminus J$ such that $min(X_1)<\ldots <min(X_k)$
\begin{enumerate}[label=({\roman*})]
\item if $i \in J$ and $j \in f(i)$ we apply $\xi^i:\mathcal{P}_0(f^{-1}(j)) \to \mathcal{P}_w(f^{-1}(j)\setminus \{i\})$;
\item if $i \in J$ and $j \in f(k)$ for $k \not= i$, we apply the partial composition in the operad $\mathcal{P}_0$
$$\underset{i}{\circ}:  \mathcal{P}_0\big(f^{-1}(f(i))\big) \otimes \mathcal{P}_0(f^{-1}(j)) \to  \mathcal{P}_0(f^{-1}(f(i)) \amalg f^{-1}(j) \setminus \{i\});$$
\item if $i \in X_p \subset \textbf{q}\setminus J$, we apply the right $\mathcal{P}_0$-module structure on $\mathcal{P}_w$: 
$$\underset{i}{\circ}: \mathcal{P}_w\big (X_p) \otimes \mathcal{P}_0(f^{-1}(j)) \to  \mathcal{P}_w(X_p \amalg f^{-1}(j) \setminus \{i\}).$$
\end{enumerate}
\end{prop}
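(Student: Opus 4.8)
The plan is to deduce the statement from Merkulov's description of the \emph{free} wheeled PROP on a collection of $(\mathfrak{S}_m,\mathfrak{S}_n)$-bimodules \cite[Section 2.1.6]{Merkulov-2}, combined with a normal form argument that exploits the fact that every generator of a wheeled operad has at most one output. First I would record that $\C_\mathcal{P}$, the wheeled PROP generated by $\mathcal{P}$, is the quotient of the free wheeled PROP on the collection $E$ defined by $E(n,1)=\mathcal{P}_0(n)$, $E(n,0)=\mathcal{P}_w(n)$ and $E(m,n)=0$ for $n\geq 2$, by the relations coming from the operadic composition of $\mathcal{P}_0$, the right $\mathcal{P}_0$-module structure on $\mathcal{P}_w$, and the contractions $\xi^i$. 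By Merkulov's description this free wheeled PROP is spanned by directed graphs, possibly carrying wheels, whose vertices are decorated by elements of $E$ and whose legs are labelled by the inputs $\textbf{q}$ and the outputs $\textbf{l}$.

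The combinatorial heart of the argument is a classification of the connected components of such a graph. Since every vertex carries at most one outgoing edge (because $E$ is concentrated in output-numbers $0$ and $1$), each connected component is, as a functional graph, either a rooted tree with a single output or a connected graph containing exactly one oriented cycle and no output. I would then reduce each component to a single corolla: a tree component is contracted, using the operadic composition of $\mathcal{P}_0$, to one corolla decorated by an element of $\mathcal{P}_0$; a one-cycle component is reduced, using operadic composition along its tree parts, a single contraction $\xi^i$ to close the cycle, and the right $\mathcal{P}_0$-module structure on $\mathcal{P}_w$, to one corolla decorated by an element of $\mathcal{P}_w$. This produces a surjection from the right-hand side of the asserted formula onto $\C_\mathcal{P}(q,l)$: the surjection $f\colon J\twoheadrightarrow\textbf{l}$ records which output each tree-input feeds, so that $f^{-1}(i)$ is the input set of the $i$-th tree, and the ordered partition $X_1\amalg\cdots\amalg X_k$ of $\textbf{q}\setminus J$ records the input sets of the wheel components.

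To promote this surjection to an isomorphism I would check that the decomposition into components, and the subsequent reduction to corollas, is unique up to the data already quotiented out. The only ambiguity is the relabelling by $\mathfrak{S}_q\times\mathfrak{S}_l$ and the fact that the wheel components are unordered; the normalisation $\min(X_1)<\cdots<\min(X_k)$ fixes a section of this reordering, so that each class has a unique representative in the indexing set. Once the underlying bimodules are identified, the remaining work is to match the structure maps: the $\mathfrak{S}_l$- and $\mathfrak{S}_q$-actions correspond to post- and pre-composition with $f$ and to permuting the partition, and horizontal composition corresponds to disjoint union of the decorated graphs, hence to disjoint union of the maps $f$ and of the partitions.

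Finally I would verify the contraction formula by analysing the effect of $\xi^i_j$ on a disjoint union of corollas, that is, on the normal form. Connecting the $i$-th input to the $j$-th output either closes a wheel inside the tree whose output is $j$ when $i$ is an input of that same tree, giving case (i) via $\xi^i$; or merges the tree of $i$ with the tree carrying the output $j$ via the partial operadic composition $\circ_i$ when $i$ is a tree-input belonging to a different tree, giving case (ii); or feeds a wheel component into that tree via the module action when $i\in\textbf{q}\setminus J$, giving case (iii); in each case I would track how the resulting component is re-normalised. The step I expect to be the main obstacle is the uniqueness half of the normal form together with the bookkeeping of the symmetric group actions and the $\min$-ordering, since one must be certain that distinct indices $(J,f,X_1,\dots,X_k)$ yield linearly independent summands and that no graph is counted twice under the reordering of wheel components.
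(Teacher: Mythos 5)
Your proposal takes essentially the same route as the paper, which establishes this proposition simply by invoking Merkulov's graph description of the free wheeled PROP on a collection of $(\mathfrak{S}_m,\mathfrak{S}_n)$-bimodules \cite[Section 2.1.6]{Merkulov-2} and reducing each connected component of a decorated graph to a single corolla; your normal-form argument, including the use of the $\min$-ordering to rigidify the unordered wheel components and the case analysis for $\xi^i_j$, is a correct fleshing-out of that citation. One minor slip worth repairing: since $\mathcal{P}_w$-decorated vertices have no outgoing edge, a connected component can also be a tree rooted at a $\mathcal{P}_w$-vertex (no output leg \emph{and} no cycle), a third case your stated dichotomy omits but which your reduction already handles, via the right $\mathcal{P}_0$-module action alone, yielding a $\mathcal{P}_w$-corolla exactly as for the one-cycle components.
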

To illustrate the contractions defined in the previous proposition consider in $\C_\mathcal{P}(9,2)$ the summand corresponding to $J=\{1,2,3,4,5\}$, $f: J\to \mathbf{2}$ given by $f(1)=f(2)=f(3)=1$ and $f(4)=f(5)=2$, $X_1=\{6,7,8\}$ and $X_2=\{9\}$ and consider the element $X \in \mathcal{P}_0(\{1,2,3\}) \otimes \mathcal{P}_0(\{4,5\}) \otimes \mathcal{P}_w(X_1) \otimes \mathcal{P}_w(X_2)$ given by the graph:

\[
	X=\vcenter{\hbox{\begin{tikzpicture}[baseline=1.8ex,scale=0.3]
	\draw (2,0) -- (2,2);
	\draw (2,4) -- (2,2);
	\draw (3,4) -- (2,2);
	\draw (1,4) -- (2,2);
	\draw (5,0) -- (5,2);
	\draw (4,4) -- (5,2);
	\draw (6,4) -- (5,2);
	\draw (7,4) -- (8,2);
	\draw (8,4) -- (8,2);
	\draw (9,4) -- (8,2);
	\draw (10,4) -- (10,2);
	\draw[fill=white] (2,2) circle (8pt);
	\draw[fill=white] (5,2) circle (8pt);
	\draw[fill=white] (8,2) circle (8pt);
	\draw[fill=white] (10,2) circle (8pt);
	\draw[->,>=latex]  (8.2,1.8) to [out=-45,in=225,looseness=10] (7.8,1.8); 
	\draw[->,>=latex]  (10.2,1.8) to [out=-45,in=225,looseness=10] (9.8,1.8); 
		\end{tikzpicture}}}
		\]
		The case $(i)$ is illustrated by:
\[
	\xi_1^1(X)=\vcenter{\hbox{\begin{tikzpicture}[baseline=1.8ex,scale=0.3]
	\draw (2,4) -- (2,2);
	\draw (3,4) -- (2,2);
	\draw (1,4) -- (2,2);
	\draw (5,0) -- (5,2);
	\draw (4,4) -- (5,2);
	\draw (6,4) -- (5,2);
	\draw (7,4) -- (8,2);
	\draw (8,4) -- (8,2);
	\draw (9,4) -- (8,2);
	\draw (10,4) -- (10,2);
	\draw[fill=white] (2,2) circle (8pt);
	\draw[fill=white] (5,2) circle (8pt);
	\draw[fill=white] (8,2) circle (8pt);
	\draw[fill=white] (10,2) circle (8pt);
	\draw[->,>=latex]  (8.2,1.8) to [out=-45,in=225,looseness=10] (7.8,1.8); 
	\draw[->,>=latex]  (10.2,1.8) to [out=-45,in=225,looseness=10] (9.8,1.8); 
	\draw[->,>=latex]  (2,1.9) to [out=-90,in=225,looseness=5] (1,4); 
		\end{tikzpicture}}}
		\]	
		The case $(ii)$ is illustrated by:
\[
	\xi_2^1(X)=\vcenter{\hbox{\begin{tikzpicture}[baseline=1.8ex,scale=0.3]
	\draw (2,4) -- (3,2);
	\draw (3,4) -- (3,2);
	\draw (4,4) -- (3,2);
	\draw (3,0) -- (3,2);
	\draw (2,4) -- (1,6);
	\draw (2,4) -- (3,6);
	\draw (6,4) -- (7,2);
	\draw (7,4) -- (7,2);
	\draw (8,4) -- (7,2);
	\draw (10,4) -- (10,2);
	\draw[fill=white] (2,4) circle (8pt);
	\draw[fill=white] (3,2) circle (8pt);
	\draw[fill=white] (7,2) circle (8pt);
	\draw[fill=white] (10,2) circle (8pt);
	\draw[->,>=latex]  (7.2,1.8) to [out=-45,in=225,looseness=10] (6.8,1.8); 
	\draw[->,>=latex]  (10.2,1.8) to [out=-45,in=225,looseness=10] (9.8,1.8); 
			\end{tikzpicture}}}
				\]
						The case $(iii)$ is illustrated by:
\[
	\xi_1^6(X)=\vcenter{\hbox{\begin{tikzpicture}[baseline=1.8ex,scale=0.3]
	\draw (4,0) -- (4,2);
	\draw (3,4) -- (4,2);
	\draw (5,4) -- (4,2);
	\draw (7,4) -- (8,2);
	\draw (7,4) -- (6,6);
	\draw (7,4) -- (7,6);
	\draw (7,4) -- (8,6);
	\draw (8,4) -- (8,2);
	\draw (9,4) -- (8,2);
	\draw (10,4) -- (10,2);
	\draw[fill=white] (4,2) circle (8pt);
	\draw[fill=white] (8,2) circle (8pt);
	\draw[fill=white] (7,4) circle (8pt);
	\draw[fill=white] (10,2) circle (8pt);
	\draw[->,>=latex]  (8.2,1.8) to [out=-45,in=225,looseness=10] (7.8,1.8); 
	\draw[->,>=latex]  (10.2,1.8) to [out=-45,in=225,looseness=10] (9.8,1.8); 
		\end{tikzpicture}}}
		\]
\begin{rem}
Note that
$$\C_\mathcal{P}(n,n)= \underset{ f \in \mathfrak{S}_n}{\bigoplus}(\mathcal{P}_0(1))^{\otimes n}.$$
Considering the identity operation $\mathrm{Id} \in \mathcal{P}_0(1)$ we obtain a monomorphism of $\mathfrak{S}_n$-bimodule:
$$\kk[\mathfrak{S}_n] \to \C_\mathcal{P}(n,n).$$

\end{rem}
\begin{rem}
Recall that from (\ref{vertical-wP}) vertical composition in a wheeled PROP is induced by horizontal composition and contractions. 
\end{rem}

The forgetful functor from the category of wheeled operads to the category of operads has a left adjoint denoted by $(-)^{\circlearrowright}$. For $\mathcal{P}_0$ an operad, $(\mathcal{P}_0)^{\circlearrowright}$ is called the \textit{wheeled completion} of $\mathcal{P}_0$.

\begin{rem} \label{subPROP}
The  wheeled PROP $\C_\mathcal{P}$ generated by a wheeled operad $\mathcal{P}$ has two distinguished subPROPs:
\begin{enumerate}
\item the subPROP $\C_{\mathcal{P}_0}$ generated by the operad $\mathcal{P}_0$, corresponding to forgetting the wheeled part of $\mathcal{P}$;
\item the subPROP, denoted by $\C_{w}$, such that, for all $n \in \N$, 
$$\C_{w}(n, 0)=\C_\mathcal{P}(n, 0); \qquad \C_{w}(n, n)=\kk[\mathfrak{S}_n]  \mathrm{\ for\ } n\geq 1; \qquad \C_{w}(n, m)=0 \mathrm{\ for\ } m \notin\{0,n\}$$
corresponding to forgetting the operadic part in the PROP $\C_\mathcal{P}$.
\end{enumerate}
\end{rem}

\section{Recollections on the functor category $\F(\gr; \kk)$} \label{Rappel-foncteurs}

The purpose of this section is to review results on covariant and contravariant functors from the category $\gr$ of finitely-generated free groups to $\kk$-modules. We refer the reader to \cite{HPV}, \cite{DV2} and \cite{DPV} for more details.

We denote by $\Z^{*n}$ the free group on $n$ generators.
The category $\gr$ has a skeleton with  objects $\Z^{*n}$ for $n \in \N$. Consequently, $\gr$ is essentially small  and we denote by $\F(\gr; \kk)$ (resp. $\F(\gr^{op}; \kk)$) the category of covariant (resp. contravariant) functors from $\gr$ to $\V$.

A fundamental example of functor in $\F(\gr; \kk)$ is the abelianization functor $\abe: \gr \to \V$ that sends a free group $G$ to $(G/[G,G])\otimes_\Z \kk$. 

Composing $\abe$ with the duality functor $-^*: \mathcal{V} \to (\mathcal{V})^{op}$ we obtain a functor from $\gr$ to $\V^{op}$. The category of functors from $\gr$ to $\V^{op}$ is equivalent to $(\F(\gr^{op}; \kk))^{op}$. We will denote by $\abe^\vee: \gr^{op} \to \V$ the functor corresponding to $-^* \circ \abe$ by this equivalence.

By the Yoneda lemma, the category $\F(\gr; \kk)$ has enough projective objects and a set of projective generators is given by the functors, for  $n \in \N$:
$$P_n:=\kk[\gr(\Z^{*n},-)]$$
where $\kk[-]$ is the linearization functor from the category of sets to $\V$.

Each functor $F: \gr \to \V$ can be decomposed naturally  as a direct sum $F = F(0) \oplus \bar{F}$ where $F(0)$ is the constant functor equal to $F(0)$ and $\bar{F}$ is a \textit{reduced} functor i.e. $\bar{F}(0)=0$. For simplicity we denote $\bar{P}:= \overline{P_1}$.

The notion of cross-effects and polynomial functors introduced by Eilenberg and Mac Lane for categories of modules can be extended to functors on $\gr$ and on $\gr^{op}$. The $d$th cross-effect defines an exact functor $cr_d: \F(\gr; \kk) \to \F(\gr^{\times n}; \kk)$ where $\F(\gr^{\times n}; \kk)$ is the category of functors from $\gr^{\times n}$ to $\V$. A functor $F: \gr \to \V$ is polynomial of degree $d$ if $cr_{d+1}(F)=0$ and $cr_d(F) \not=0$.
Similarly, we can define polynomial functors on $\gr^{op}$.

The functors $\abe$ and $\abe^\vee$ are reduced polynomial functors of degree one.

The reduced functor $\bar{P}$ and the cross-effects are related by the following result:

\begin{prop} \label{Prop-barP}
For $d \in \mathbb{N}$ and $F \in \F(\gr; \kk)$ there is a natural isomorphism
$$Hom_{\F(\gr; \kk)}(\bar{P}^{\otimes d}, F) \simeq cr_d(F)(\Z, \ldots, \Z).$$
\end{prop}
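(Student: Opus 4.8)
The plan is to reduce the statement to the Yoneda lemma together with the defining idempotent decomposition of the cross-effects, isolating the summand in which $F$ is reduced in \emph{every} variable. The starting observation is the natural isomorphism $P_1^{\otimes d}\cong P_d$. Indeed, since $\Z^{*d}$ is the $d$-fold coproduct of $\Z$ in $\gr$, there is a natural bijection $\gr(\Z^{*d},G)\cong\gr(\Z,G)^{\times d}$; linearising and using $\kk[A]\otimes\kk[B]\cong\kk[A\times B]$ for sets $A,B$ gives a natural isomorphism $P_1^{\otimes d}(G)=\kk[\gr(\Z,G)]^{\otimes d}\cong\kk[\gr(\Z^{*d},G)]=P_d(G)$. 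Hence by Yoneda $Hom_{\F(\gr; \kk)}(P_1^{\otimes d},F)\cong Hom_{\F(\gr; \kk)}(P_d,F)\cong F(\Z^{*d})$.

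Next I would split off the top piece by idempotents. Using the zero object $0$ of $\gr$, the trivial endomorphism $\zeta\colon G\to 0\to G$ induces a natural idempotent $p:=P_1(\zeta)$ on $P_1$ whose image is the constant summand $P_1(0)=\kk$ and whose complementary summand is $\bar{P}$; this is exactly the decomposition $P_1=\kk\oplus\bar{P}$. On $P_1^{\otimes d}\cong P_d$ the $d$ idempotents $p_i:=\mathrm{id}^{\otimes(i-1)}\otimes p\otimes\mathrm{id}^{\otimes(d-i)}$ commute, and $\bar{P}^{\otimes d}=\mathrm{im}\big(\prod_{i=1}^d(1-p_i)\big)$. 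Under the isomorphism $P_1^{\otimes d}\cong P_d$ one checks that $p_i$ is precomposition by the endomorphism $\sigma_i\colon\Z^{*d}\to\Z^{*d}$ sending the $i$-th generator to the identity and fixing the others. Applying $Hom_{\F(\gr; \kk)}(-,F)$ and Yoneda, precomposition by $p_i$ becomes $F(\sigma_i)$ on $F(\Z^{*d})$, so that
\[
Hom_{\F(\gr; \kk)}(\bar{P}^{\otimes d},F)\cong\mathrm{im}\Big(\prod_{i=1}^d\big(1-F(\sigma_i)\big)\colon F(\Z^{*d})\to F(\Z^{*d})\Big).
\]

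Finally I would identify the right-hand side with the cross-effect. The endomorphisms $F(\sigma_i)$ are commuting idempotents, so the common image of $\prod_i(1-F(\sigma_i))$, equivalently $\bigcap_i\ker F(\sigma_i)$, is by the standard (Eilenberg--Mac Lane) description of the cross-effect precisely the part of $F(\Z*\cdots*\Z)=F(\Z^{*d})$ that is reduced in all $d$ coordinates, that is $cr_d(F)(\Z,\ldots,\Z)$. This yields the claimed isomorphism, and it is visibly natural in $F$ since each step is.

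The main thing to check carefully --- and essentially the only real obstacle --- is the bookkeeping in the middle step: that under $P_1^{\otimes d}\cong P_d$ and the Yoneda identification the factorwise projection $p_i$ corresponds to $F(\sigma_i)$ with the correct variance, and hence that it is the single summand indexed by the \emph{full} set $\{1,\dots,d\}$ (rather than some proper subset) that matches the top cross-effect. Everything else is a direct consequence of Yoneda's lemma and the additivity decomposition $P_1=\kk\oplus\bar{P}$.
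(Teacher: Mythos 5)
Your argument is correct, and it is essentially the standard proof of this corepresentability statement: the paper itself records Proposition \ref{Prop-barP} without proof (it is a recollection, with references to the literature on $\F(\gr;\kk)$, e.g.\ Hartl--Pirashvili--Vespa and Djament--Vespa), and the proof given there is exactly your route via $P_1^{\otimes d}\cong P_d$, the Yoneda lemma, the splitting $P_1=\kk\oplus\bar P$ by the idempotent induced by $G\to 0\to G$, and the identification of the top summand with the $d$-th cross-effect through the commuting idempotents $F(\sigma_i)$. The only point worth writing out explicitly is the one you flag yourself: that $\ker F(\sigma_i)=\ker F(\rho_i)$ for the retraction $\rho_i$ collapsing the $i$-th free factor (since $\sigma_i=\iota_i\circ\rho_i$ with $\rho_i\iota_i=\mathrm{id}$, so $F(\iota_i)$ is split injective), which pins the image of $\prod_i\bigl(1-F(\sigma_i)\bigr)$ to the summand indexed by the full set $\{1,\dots,d\}$, i.e.\ to $cr_d(F)(\Z,\dots,\Z)$.
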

We deduce the following Corollary:
\begin{cor} \label{Cor-barP}
For $d \in \mathbb{N}$ and $F \in \F(\gr; \kk)$ a polynomial functor of degree $<d$ then
$$Hom_{\F(\gr; \kk)}(\bar{P}^{\otimes d}, F) =0.$$
\end{cor}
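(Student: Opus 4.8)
The plan is to deduce this immediately from Proposition \ref{Prop-barP}. First I would invoke that natural isomorphism, which identifies $Hom_{\F(\gr; \kk)}(\bar{P}^{\otimes d}, F)$ with the value $cr_d(F)(\Z, \ldots, \Z)$ of the $d$-th cross-effect of $F$ evaluated at $(\Z, \ldots, \Z)$. In this way the whole statement is reduced to verifying that this particular cross-effect vanishes, so no further analysis of the morphism set $Hom_{\F(\gr; \kk)}(\bar{P}^{\otimes d}, F)$ is needed.

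Next I would unwind the polynomiality hypothesis. By the definition recalled before the statement, $F$ being polynomial of degree $<d$ means that $F$ is polynomial of some degree $d_0 \leq d-1$, so that $cr_{d_0+1}(F)=0$. The one point that deserves a word of care is that what we need is the vanishing of $cr_d(F)$ rather than of $cr_{d_0+1}(F)$, and here $d_0+1 \leq d$. This is supplied by the standard monotonicity of cross-effects: once $cr_k(F)=0$ for some $k$, every higher cross-effect $cr_j(F)$ with $j \geq k$ vanishes as well, since $cr_j(F)$ is obtained by iterating the cross-effect construction and is in particular built out of $cr_k(F)$. Applying this with $k=d_0+1 \leq d$ yields $cr_d(F)=0$, and hence $cr_d(F)(\Z, \ldots, \Z)=0$.

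Combining the two steps, the isomorphism of Proposition \ref{Prop-barP} forces $Hom_{\F(\gr; \kk)}(\bar{P}^{\otimes d}, F)=0$, as claimed. I do not expect any genuine obstacle here: the only substantive input is Proposition \ref{Prop-barP} itself, and the corollary is a purely formal consequence of it, the sole subtlety being the passage from $cr_{d_0+1}(F)=0$ to $cr_d(F)=0$ via the monotonicity of cross-effects.
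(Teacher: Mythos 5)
Your proposal is correct and matches the paper's intended argument: the paper derives Corollary \ref{Cor-barP} directly from Proposition \ref{Prop-barP} without further comment, exactly as you do. Your explicit justification of the passage from $cr_{d_0+1}(F)=0$ to $cr_d(F)=0$ via the monotonicity of cross-effects is the standard fact the paper leaves implicit, and it is valid.
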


Since the abelianization functor $\abe$ takes its values in $\V^f$, for $F$ a functor from $\V^f$ to $\V$, we can postcompose $\abe$ with $F$ to obtain a functor of $\F(\gr; \kk)$. 
An important example of functor from $\V^f$ to $\V$ is the $d$th tensor product functor $T^d: \V^f \to \V$, for $d \in \N$, defined  on objects by $V \mapsto V^{\otimes d}$. The symmetric group $\mathfrak{S}_d$ acts by place permutations on $T^d$. 
The functor $\abe^{\otimes d}:= T^d \circ \abe$ is a polynomial covariant functor of degree $d$ and $(\abe^{\vee})^{\otimes d}:= T^d \circ \abe^\vee$ is a polynomial contravariant functor of degree $d$. The notion of exponential functors is a powerful tool for computation (see \cite{FFSS}). A graded exponential functor is a monoidal functor from $(\V^f, \oplus, 0)$ to $(gr\V, \otimes, \kk)$.

If $\kk$ is a field of characteristic 0, the $d$th exterior power functor is defined, on a vector space $V$,  by $\Lambda^d(V)=(T^d(V) \otimes sgn_n)_{\mathfrak{S}_d}$ where $sgn_n$ is the signature module and $\mathfrak{S}_d$ acts diagonally.  The functor $\Lambda^d$ is a direct summand of $T^d$. The functor $\Lambda^d \abe:=\Lambda^d \circ \abe$ is a polynomial covariant functor of degree $d$. The exterior powers define a graded exponential functor $\Lambda^\bullet$. In particular, there are natural commutative products $\Lambda^i \otimes \Lambda^j \to \Lambda^{i+j}$ and cocommutative coproducts $\Lambda^{i+j} \to \Lambda^i \otimes \Lambda^j$, for $i,j \in \N$. 
Composing with the abelianization functor, we obtain a natural transformation of functors in $\F(\gr, \kk)$
\begin{equation} \label{Exterior}
\Lambda^{i+j}  \abe \to \Lambda^i  \abe \otimes \Lambda^j \abe
\end{equation}
which will be used later.

For $G,F \in \F(\gr; \kk)$, the \textit{exterior tensor product} of $G$ and $F$ is the functor $G \boxtimes F: \gr \times \gr\to \kk\mathrm{-}Mod$ given on objects by
$$(G \boxtimes F)(\Z^{*n},\Z^{*m} )=G(\Z^{*n}) \otimes F(\Z^{*m}).$$

Similarly, for $G \in \F(\gr^{op}; \kk)$ and $F \in \F(\gr; \kk)$ we define $G \boxtimes F: \gr^{op}  \times \gr\to \kk\mathrm{-}Mod$.


We denote by $\amalg_d:  \gr^{\times d} \to \gr$ the functor obtained by iteration of the free product (which is the coproduct in $\gr$) and $\delta_d: \gr \to \gr^{\times d}$ the diagonal functor. The functor $\delta_d$ is right adjoint to the functor $\amalg_d$. It follows that the functor   $\delta_d^*:\F(\gr^{\times d}; \kk) \to \F(\gr;\kk)$ given by precomposition is left adjoint of the functor  $\pi_d^*: \F(\gr; \kk) \to \F(\gr^{\times d }; \kk)$ given by precomposition. 

Tensor powers  $T^\bullet$ do not define an exponential functor but we have a similar property using induction of symmetric groups (see \cite[(3)]{Vespa}). In particular we have:
\begin{equation} \label{tensor2}
\pi_2^*(\abe^{\otimes q}) \simeq  \underset{J \subset \textbf{q}}{\bigoplus}\abe^{\otimes |J|}  \boxtimes \abe^{\otimes |\textbf{q} \setminus J|} 
\end{equation}

\part{Stable cohomology of $Aut(\Z^{*n})$ with coefficients given by a bifunctor} \label{st-hom}

\section{Definition of stable homology of $Aut(\Z^{*n})$} \label{def-stab}

 Let $I_n: Aut(\Z^{*n}) \to Aut(\Z^{*n+1})$ be the group monomorphism induced by $-*\Z$. By restriction along $I_n$ we obtain a functor $U^{I_n}: Aut(\Z^{*n+1})\mathrm{-}Mod \to Aut(\Z^{*n})\mathrm{-}Mod$ where $Aut(\Z^{*n})\mathrm{-}Mod$ is the category of modules over $Aut(\Z^{*n})$.

For $B: \gr^{op} \times \gr \to \mathcal{V}$  a functor and $n\in \N$, $B(\Z^{*n}, \Z^{*n}) $ is a $Aut(\Z^{*n})^{op} \times Aut(\Z^{*n})$-module.
Let $p_n: \Z^{*n+1} \to \Z^{*n}$ be the group epimorphism  given by the projection on the first $n$ variables and $i_n: \Z^{*n} \to \Z^{*n+1}$ be the group monomorphisms given by the inclusion of the first $n$ variables.

The previous data give rise to $Aut(\Z^{*n})$-homomorphisms
$$U^{I_n}\left( B(\Z^{*n+1}, \Z^{*n+1})\right) \xrightarrow{B(i_n, p_n)} B(\Z^{*n}, \Z^{*n}) $$
where the structure of $Aut(\Z^{*n})$-module on $B(\Z^{*n}, \Z^{*n})$ and $U^{I_n}\left( B(\Z^{*n+1}, \Z^{*n+1}) \right)$ is given by the diagonal action.

This gives group morphisms:
$$H^*(Aut(\Z^{*n+1}); B(\Z^{*n+1}, \Z^{*n+1})) \xrightarrow{\alpha_n} H^*(Aut(\Z^{*n}); B(\Z^{*n}, \Z^{*n})).$$
The stable cohomology of the automorphism groups of free groups with coefficients given by $B$ is then defined  by
$$H^*_{st}(B):=\underset{n \in \N}{\text{lim}}H^*(Aut(\Z^{*n}); B(\Z^{*n}, \Z^{*n}))$$
where the limit is taken over the group morphisms $\alpha_n$. 

In this paper we consider the family of coefficients $B_{l,q}=(\abe^\vee)^{\otimes l} \boxtimes \abe^{\otimes q}$ where $l,q \in \N$ and  $\boxtimes$ is the exterior tensor product. By the usual canonical homomorphism  $E^* \otimes F \to Hom(E,F)$ for $E,F$  two $\kk$-modules which is an isomorphism if $E$ is free finitely generated, we obtain  isomorphisms  
$$B_{l,q}(\Z^{*n}, \Z^{*m})=((\kk^n)^*)^{\otimes l} \otimes (\kk^m)^{\otimes q} \simeq Hom_{\mathcal{V}}((\kk^n)^{\otimes l}, (\kk^m)^{\otimes q} ).$$
Based on this isomorphism, the functor $B_{l,q}$ will be sometimes denoted by $Hom_{\mathcal{V}}(\abe^{\otimes l}, \abe^{\otimes q} )$.

\section{Stability}
Let $\mathcal{G}$ be the category having as objects the finitely-generated free groups and where a morphism from $A$ to $B$ is a pair $(u,H)$ where $u: A \hookrightarrow B$ is an injective homomorphism and $H$ is a subgroup of $B$ such that $B=H * u(A)$. Recall from \cite[D\'efinition 4.2]{DV2} that there is a functor $\iota: \mathcal{G} \to \gr^{op} \times \gr$ sending an object $A$ to $(A,A)$ and a map $(u,H): A \to B$ to $(B=H * u(A) \to u(A) \xrightarrow{u^{-1}} A  , u: A \to B)$. 

The category $\mathcal{G}$ is homogeneous in the sense of \cite[Definition 1.3]{RWW} and the functor $B_{l,q}=(\abe^\vee)^{\otimes l} \boxtimes \abe^{\otimes q}$ is the exterior product between  a polynomial contravariant functor of degree $l$ and a polynomial covariant functor of degree $q$, so the composition $B \circ \iota$ is a coefficient system of degree $l+q$. Hence, by \cite[Theorem 5.4]{RWW}, for $i \in \N$, the group morphism
$$H^i(Aut(\Z^{*n+1}); B_{l,q}(\Z^{*n+1}, \Z^{*n+1})) \xrightarrow{\alpha_n} H^i(Aut(\Z^{*n}); B_{l,q}(\Z^{*n}, \Z^{*n}) )$$
is an isomorphism for $n\geq N_{l,q,i}$ where $N_{l,q,i}=2i+l+q+3$. We deduce that, for $n$ big enough, we have an isomorphism
\begin{equation} \label{iso-stable}
H^i_{st}((\abe^\vee)^{\otimes l} \otimes \abe^{\otimes q})\simeq H^i(Aut(\Z^{*n}); B_{l,q}(\Z^{*n}, \Z^{*n}) ).
\end{equation}

For $l_1, q_1, l_2, q_2$ in $\N$, the cup product gives morphisms:
$$\xymatrix{
H^i(Aut(\Z^{*n}); B_{l_1,q_1}(\Z^{*n}, \Z^{*n})) \otimes H^j(Aut(\Z^{*n}); B_{l_2,q_2}(\Z^{*n}, \Z^{*n}) ) 
\ar[d]^{\cup} \\ 
H^{i+j}(Aut(\Z^{*n}); B_{l_1,q_1}(\Z^{*n}, \Z^{*n}) \otimes B_{l_2,q_2}(\Z^{*n}, \Z^{*n}) )
}$$
For $n>Max(N_{l_1,q_1,i}, N_{l_2,q_2,j})$, the stability isomorphisms (\ref{iso-stable}) give the following cup product map on the stable cohomology:

 \begin{equation} \label{cup}
\cup : H^i_{st}(B_{l_1,q_1})\otimes H^j_{st}(B_{l_2,q_2})\to  H^{i+j}_{st}(B_{l_1,q_1}\otimes B_{l_2,q_2} ).
\end{equation}

\section{The wheeled PROP $\Hst$ of stable cohomology} \label{PROP-H}
The aim of this section is to prove that the stable cohomology of $Aut(\Z^{*n})$  with coefficients twisted by $B_{l,q}=(\abe^\vee)^{\otimes l} \boxtimes \abe^{\otimes q}=Hom_{\mathcal{V}}(\abe^{\otimes l}, \abe^{\otimes q} )$ defines a wheeled PROP. This should be viewed as a cohomological version of the wheeled endomorphism PROP consider in Example \ref{ex-wP}
using the stability isomorphism (\ref{iso-stable}).

Recall that for $M_1, M_2, N_1, N_2$ objects in $\V$ we have a canonical homomorphism of $\kk$-modules:
\begin{equation}\label{lambda-iso}
\lambda: Hom_\V(M_1,M_2) \otimes Hom_\V(N_1,N_2) \to Hom_\V(M_1 \otimes N_1,M_2 \otimes N_2)
\end{equation}
which is an isomorphism if $M_1, M_2, N_1, N_2$ are free finitely generated modules.
\begin{defn} \label{def-H}
The PROP $\Hst$ is defined by the graded $(\mathfrak{S}_q, \mathfrak{S}_l)$-bimodules 
$$\Hst(q,l)=H^*_{st}(Hom_{\mathcal{V}}(\abe^{\otimes l}, \abe^{\otimes q} ))$$
where the action of the symmetric group $\mathfrak{S}_q$ (resp. $\mathfrak{S}_l$) is given by place permutations of the copies of $\abe$ (resp. $(\abe^\vee$)).

The horizontal composition $\otimes: \Hst(q_1,l_1) \otimes \Hst(q_2,l_2) \to \Hst(q_1+q_2,l_1+l_2)$ is given by:
$$\xymatrix{
H_{st}^{*}(Hom_{\mathcal{V}}(\abe^{\otimes l_1}, \abe^{\otimes q_1} ))\otimes H_{st}^{*}(Hom_{\mathcal{V}}(\abe^{\otimes l_2}, \abe^{\otimes q_2} ))\ \ar[d]^{\cup}\\
H_{st}^{*}(Hom_{\mathcal{V}}(\abe^{\otimes l_1}, \abe^{\otimes q_1} )\otimes Hom_{\mathcal{V}}(\abe^{\otimes l_2}, \abe^{\otimes q_2} )) \ar[d]^-{\simeq }\\
H^{*}_{st}(Hom_{\mathcal{V}}(\abe^{\otimes l_1+l_2}, \abe^{\otimes q_1+q_2} ))
}
$$
where $\cup$ is the cup product map given in (\ref{cup}) and  the isomorphism is induced by the isomorphisms $\lambda$ given in (\ref{lambda-iso}).

The vertical composition $\circ: \Hst(l,m) \otimes \Hst(q,l)  \to \Hst(q,m)$ is given by:
$$\xymatrix{
H_{st}^{*}(Hom_{\mathcal{V}}(\abe^{\otimes m}, \abe^{\otimes l} ))\otimes H_{st}^{*}(Hom_{\mathcal{V}}(\abe^{\otimes l}, \abe^{\otimes q} ))\ \ar[d]^{\cup}\\
H_{st}^{*}(Hom_{\mathcal{V}}(\abe^{\otimes m}, \abe^{\otimes l} )\otimes Hom_{\mathcal{V}}(\abe^{\otimes l}, \abe^{\otimes q} )) \ar[d]^{\gamma}\\
H^{*}_{st}(Hom_{\mathcal{V}}(\abe^{\otimes m}, \abe^{\otimes q} ))
}
$$
where $\gamma$ is the map induced by the composition in $\V$.
\end{defn}

\begin{prop} \label{H-wP}
The PROP $\Hst$ is a wheeled PROP for the contractions, for $1\leq i\leq q$ and $1 \leq j\leq l$:
$$\xi_j^i: \Hst(q,l) \to \Hst(q-1, l-1)$$
induced, for $n \geq N_{l,q,*}$,  by the maps
$$H^*(Aut(\Z^{*n}),\varphi^i_j): H^*(Aut(\Z^{*n}), B_{l,q}(\Z^{*n}, \Z^{*n})) \to H^*(Aut(\Z^{*n}),  B_{l-1,q-1}(\Z^{*n}, \Z^{*n}))$$
where $\varphi^i_j$ are defined in Example \ref{ex-wP}
\end{prop}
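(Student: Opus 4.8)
The plan is to realize every structure map of $\Hst$---including the contractions---as the image, under the functor $H^*(Aut(\Z^{*n});-)$ followed by the stable limit, of the corresponding structure map of the wheeled endomorphism PROP $\mathcal{E}nd_{\kk^n}$ of Example \ref{ex-wP}. The starting observation is that $Aut(\Z^{*n})$ acts on $\kk^n=\abe(\Z^{*n})$ through the abelianization homomorphism $Aut(\Z^{*n})\to GL_n(\kk)$, and that the partial trace $\varphi^i_j$ of Example \ref{ex-wP} is $GL_n(\kk)$-equivariant (being built from the evaluation $V^*\otimes V\to\kk$, it is a morphism of $GL_n(\kk)$-representations by elementary linear algebra). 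Consequently $\varphi^i_j\colon B_{l,q}(\Z^{*n},\Z^{*n})\to B_{l-1,q-1}(\Z^{*n},\Z^{*n})$ is a morphism of $Aut(\Z^{*n})$-modules for the diagonal action, and therefore induces $H^*(Aut(\Z^{*n}),\varphi^i_j)$ on group cohomology. Using the stability isomorphism (\ref{iso-stable}) to identify $\Hst(q,l)$ with $H^*(Aut(\Z^{*n});B_{l,q}(\Z^{*n},\Z^{*n}))$ for a single $n\geq N_{l,q,*}$ large enough for both biarities $(q,l)$ and $(q-1,l-1)$, one defines $\xi^i_j$ to be this map.

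The first---and main---thing to verify is that this recipe is independent of the chosen $n$, i.e. that $\varphi^i_j$ descends to the limit over the stabilization maps $\alpha_n$. Here there is a genuine subtlety: the partial trace does not commute on the nose with the finite-level coefficient map $B(i_n,p_n)$, because the contraction scalar $f(v)$ is altered by the coordinate of $v$ that $p_n$ discards. The way I would resolve this is to exhibit $\varphi^i_j$ as a morphism of coefficient systems over the homogeneous category $\mathcal{G}$, rather than as a transformation compatible with $B(i_n,p_n)$ alone. Concretely, for a morphism $(u,H)\colon A\to B$ of $\mathcal{G}$ the functor $\iota$ produces a retraction $r$ of $u$ with $r\circ u=\mathrm{id}_A$; abelianizing and pairing gives $(\abe(r)^*f)(\abe(u)v)=f(\abe(r)\abe(u)v)=f(v)$, so the contraction scalar is preserved and $\varphi^i_j$ defines a natural transformation $B_{l,q}\circ\iota\Rightarrow B_{l-1,q-1}\circ\iota$ of coefficient systems, of degrees $l+q$ and $l+q-2$. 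By the functoriality of stable cohomology in the coefficient system provided by \cite[Theorem 5.4]{RWW}, such a transformation induces a well-defined map on stable cohomology, compatible with the stabilization isomorphisms; this is the map $\xi^i_j$, and it agrees with $H^*(Aut(\Z^{*n}),\varphi^i_j)$ in the stable range. I expect this coefficient-system reformulation to be the crux of the proof.

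It then remains to check the wheeled-PROP axioms: equivariance of $\xi^i_j$ for the $\mathfrak{S}_q\times\mathfrak{S}_l$-actions, the commutation relations among the contractions $\xi^i_j$, and their compatibility with the horizontal and vertical compositions of Definition \ref{def-H}. All of these identities already hold in $\mathcal{E}nd_{\kk^n}$ by Example \ref{ex-wP}, where they are equalities of $GL_n(\kk)$-equivariant maps built from evaluations, permutations, tensor products and compositions. I would deduce each axiom for $\Hst$ by applying the functor $H^*(Aut(\Z^{*n});-)$, which is lax symmetric monoidal via the cup product together with the isomorphisms $\lambda$ of (\ref{lambda-iso}) and the map $\gamma$ induced by composition in $\V$; the naturality of the cup product turns each equivariant identity into the corresponding identity between the induced maps, the Koszul signs being absorbed by the graded-commutativity of the cup product in $gr\V$. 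Finally, passing to the stable limit---legitimate because every structure map is compatible with $\alpha_n$ by the coefficient-system argument above and by the construction of Definition \ref{def-H}---transports all the axioms to $\Hst$, proving that $(\Hst,\otimes,\circ,\xi^i_j)$ is a wheeled PROP.
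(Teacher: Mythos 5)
Your first paragraph is in substance the paper's proof: using $N_{l,q,*}>N_{l-1,q-1,*}$, the paper identifies $\Hst(q,l)$ and $\Hst(q-1,l-1)$ degreewise with the cohomology of a single $Aut(\Z^{*n})$ for $n\geq N_{l,q,*}$ via the stability isomorphism (\ref{iso-stable}), defines $\xi^i_j$ as $H^*(Aut(\Z^{*n}),\varphi^i_j)$ (equivariance of $\varphi^i_j$ being the elementary linear-algebra fact you state), and then verifies the wheeled-PROP axioms, of which it spells out only the bi-equivariance diagram; your idea of transporting each axiom from the wheeled endomorphism PROP $\mathcal{E}nd_{\kk^n}$ through the lax monoidality of $H^*(Aut(\Z^{*n});-)$ is a reasonable way to organize that verification and is consistent with Definition \ref{def-H}.

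The problem is your second paragraph, which you yourself identify as the crux. The $\mathcal{G}$-naturality you prove is correct as far as it goes, but it shows that $\varphi^i_j$ commutes with the \emph{covariant} maps $B_{l,q}(\iota(u,H))$ coming from the homogeneous category, whereas the stabilization $\alpha_n$ of Section \ref{def-stab} is restriction along $I_n$ followed by the \emph{down} map $B(i_n,p_n)$, which is not in the image of $\iota$; \cite[Theorem 5.4]{RWW} is a stability-range statement and supplies no mechanism converting commutation with the up maps into commutation with $\alpha_n$. Indeed the compatibility you claim ("$\varphi^i_j$ descends to the limit over the stabilization maps $\alpha_n$") is \emph{false}, and your own discrepancy term $f_i(e_{n+1})\,v_j^{(n+1)}$ shows it: take $l=q=1$ and the class of $\mathrm{Id}\in H^0(Aut(\Z^{*n});Hom_{\mathcal{V}}(\kk^n,\kk^n))$, which is a stable class since $B(i_n,p_n)(\mathrm{Id}_{n+1})=\mathrm{Id}_n$, hence $\alpha_n[\mathrm{Id}_{n+1}]=[\mathrm{Id}_n]$; then $H^0(Aut(\Z^{*n}),\varphi^1_1)[\mathrm{Id}_n]=\mathrm{tr}(\mathrm{Id}_n)=n$, while $\alpha_n$ is the identity on $H^0$ with trivial coefficients, so the square with $\alpha_n$ fails at every level, by exactly the image of your discrepancy (which evaluates to $1$ on $\mathrm{Id}_{n+1}$). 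Consequently no argument can establish what that paragraph sets out to prove, and the closing assertion of your last paragraph, that passage to the limit is "legitimate because every structure map is compatible with $\alpha_n$ by the coefficient-system argument," is unsupported. The contractions can only be introduced the way the paper introduces them — through the stability isomorphism at a chosen $n$ in the stable range — not as maps of the inverse system defining $H^*_{st}$; note the paper's proof carefully avoids claiming commutation with $\alpha_n$.
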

\begin{proof}
For $n\geq N_{l,q,*}$, since $N_{l,q,*}>N_{l-1,q-1,*}$,  $H^*(Aut(\Z^{*n}),\varphi^i_j)$ induces a map 
$\xi_j^i: \Hst(q,l) \to \Hst(q-1, l-1)$.

We verify that the contraction maps satisfy commutativity conditions and that they are compatible with the horizontal composition.

The bi-equivariance condition for the contraction maps corresponds to the commutativity of the following diagram
$$\xymatrix{
\Hst(q,l) \ar[r]^-{\xi^i_j}  \ar[d]_{(\sigma_1, \sigma_2)} & \Hst(q-1,l-1)   \ar[d]^{(\sigma_1^{(\sigma_1^{-1}(i))}, \sigma_2^{(j)})  } \\
\Hst(q,l)  \ar[r]_-{\xi^{\sigma_1^{-1}(i)}_{\sigma_2(j)}}&\Hst(q-1,l-1) }
$$
where  $\sigma_1 \in \mathfrak{S}_q$ and $\sigma_2 \in \mathfrak{S}_l$,  $\sigma_2^{(j)}\in \mathfrak{S}_{l-1}$ is the permutation induced by $\sigma_2$ forgetting $j$ and $\sigma_2(j)$ and reindexing and $\sigma_1^{(\sigma_1^{-1}(i))} \in \mathfrak{S}_{q-1}$ is the permutation induced by $\sigma_1$ forgetting $\sigma_1^{-1}(i)$ and $i$ and reindexing.
\end{proof}

\begin{rem}
For $l>0$, $(\abe^\vee)^{\otimes l} $ is a reduced contravariant functor which is polynomial of degree $l$. It follows from the main result of \cite{DV2} that $H^{*}_{st}((\abe^\vee)^{\otimes l} )=0$ so  we have $\Hst(0,l)=0$ for $l>0$.
\end{rem}

In order to relate our results to Djament's result obtained in \cite{Djament} we introduce the following:
\begin{defn} \label{H'}
Let $\Hst'$ be the sub-PROP of $\Hst$ such that, for $n \in \N$
$${\Hst'}(n,0)={\Hst}(n,0); \qquad {\Hst'}(n,n)=\kk[\mathfrak{S}_n];  \qquad {\Hst'}(n,m)=0 \mathrm{\ if\ } m \notin \{0, n \}.$$
\end{defn}

\begin{rem}
Note that 
$$\Hst(n,n)^0=H^0_{st}(Hom_{\mathcal{V}}(\abe^{\otimes n}, \abe^{\otimes n} ))\simeq (Hom_{\mathcal{V}}((\kk^m)^{\otimes n}, (\kk^m)^{\otimes n} ))^{Aut(\Z^{*m})}$$
where the last isomorphism is given by (\ref{iso-stable}), for $m$ big enough.
Hence:
$$\Hst(n,n)^0\simeq \kk[\mathfrak{S}_n]$$
and  the endomorphisms in the subPROP $\Hst'$  correspond to the endomorphisms in $\Hst$ in degree $0$.
\end{rem}

\begin{rem} \label{PROP-B-MCG}
The wheeled PROP structure on $\Hst$ comes from the wheeled endomorphism PROP and does not depend on the family of groups considered. Consequently, there are other families of groups for which we have  a wheeled PROP structure on the stable cohomology with coefficients given by $B_{l,q}=Hom_{\mathcal{V}}(\abe^{\otimes l}, \abe^{\otimes q} )$. For example, for the braid groups we have a wheeled PROP $\Hst^{B_\infty}$ and the group morphism $B_n \to Aut(\Z^{*n})$ induces a morphism of wheeled PROP $\Hst \to \Hst^{B_\infty}$. Similarly, for $\Sigma_{g,1}$ a connected and oriented surface of genus $g$ with $1$ boundary component and $\mathcal{M}_{g,1}$ its mapping class group,  we have a wheeled PROP $\Hst^{MCG_{\infty,1}}$ and the group morphism $\mathcal{M}_{g,1}\to Aut(\Z^{*2g})$ gives a morphism of wheeled PROP $\Hst\to {\Hst^{MCG_{\infty,1}}} $. The wheeled PROPs $\Hst^{B_\infty}$ and $\Hst^{MCG_{\infty,1}}$ have further structure. 
This will be developed elsewhere.
\end{rem}
\section{Cohomological classes} \label{Kawa-classes}
In  \cite{Kawa-Magnus} (see also \cite{Kawa-TMM}) the first author introduced cohomology classes that give non-zero morphisms in the PROP $\Hst$. In this section we show that these classes are obtained from the class $h_1$, recalled below, using the wheeled PROP structure on $\mathcal{H}$.

The $q$-th Johnson map induced by a Magnus expansion $\theta$ is  a map 
$$\tau^\theta_q: Aut(\Z^{*n}) \to  Hom_{\mathcal{V}}(\kk^n, (\kk^n)^{\otimes q+1} ).$$
By  \cite[Lemma 2.1]{Kawa-Magnus} $\tau^\theta_1$ is a $1$-cocycle and the cohomology class $h_1=[\tau^\theta_1] \in H^1(Aut(\Z^{*n}), 
Hom_{\mathcal{V}}(\kk^n, (\kk^n)^{\otimes 2}))$ does not depend on the choice of  Magnus expansion $\theta$.  For $n$ big enough, $h_1$ gives a non-zero element in $Hom_\Hst(2,1)$ in cohomological degree $1$. Using  \cite[(4.4)]{Kawa-Magnus} and the anticommutativity of the cup product we obtain that $\mathfrak{S}_2$ acts on $h_1$ by the signature.

By  \cite[(4.11)]{Kawa-Magnus} we have the relation in $Hom_\Hst(3,1)$
\begin{equation} \label{relation-Kawa}
h_1 \circ (h_1 \otimes 1) + h_1 \circ (1 \otimes h_1) =0
\end{equation}
where $\otimes$ is the horizontal composition in the PROP $\Hst$ and $\circ$ is the vertical composition in the PROP $\Hst$.

Let  $\mathcal{K}$ be the sub-wheeled PROP of $\mathcal{H}$ generated by the class $h_1$. 
\begin{prop} \label{Prop-Kawa}
For $p\in \N$, the classes $h_{p+1} \in \mathcal{K}(p+2,1)$ defined inductively by
$$h_{p+1}=h_1 \circ (h_p \otimes 1) $$
and $\overline{h_p} \in  \mathcal{K}(p, 0)$ defined by  
$$\overline{h_p} = \xi_1^1(h_p)$$
are the cohomological classes introduced in  \cite{Kawa-Magnus}, in the stable range.
\end{prop}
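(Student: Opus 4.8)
The plan is to compare, at the level of explicit group cochains for a fixed Magnus expansion $\theta$, the PROP-theoretic recursion with the recursive cocycle construction of \cite{Kawa-Magnus}. First I would record the cochain-level description of the three operations involved. By Definition \ref{def-H} the horizontal composition is the group-cohomology cup product followed by $\lambda$; on normalized cochains $u \in C^i$, $v \in C^j$ the cup product is $(u \cup v)(g_1, \dots, g_{i+j}) = u(g_1, \dots, g_i) \otimes (g_1 \cdots g_i)\cdot v(g_{i+1}, \dots, g_{i+j})$, and $\lambda$ merges the two $Hom$-modules into one. The vertical composition is this same cup product followed by the pointwise composition $\gamma$ in $\V$, and the contraction $\xi^1_1$ is the pointwise trace $\varphi^1_1$ of Example \ref{ex-wP}. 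Finally, the unit $1 \in \Hst(1,1)$ is represented by the constant $0$-cochain with value $\mathrm{Id}_{\abe}$, so a representative $z_p$ of $h_p$ yields the representative $(g_1, \dots, g_p) \mapsto z_p(g_1, \dots, g_p) \otimes \mathrm{Id}_{\abe}$ of $h_p \otimes 1$.

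Next I would recall from \cite{Kawa-Magnus} the explicit cocycle $z_{p+1}$ representing the original $h_{p+1}$ (with $z_1 = \tau^\theta_1$) and rewrite it in this language. The content of the first claim is that Kawazumi's recursion for the higher Johnson cocycles is precisely a cup-product-then-compose operation, i.e. that $z_{p+1}$ agrees, up to an explicit coboundary, with $\gamma_* \lambda_*\bigl(z_1 \cup (z_p \otimes \mathrm{Id}_{\abe})\bigr)$, which by the first paragraph is a representative of $h_1 \circ (h_p \otimes 1)$. Granting this identification, the first claim follows by induction on $p$: the base case $p = 1$ is the definition $h_1 = [\tau^\theta_1]$, and the inductive step substitutes the representative of $h_p$ (equal, by hypothesis, to Kawazumi's) into the cup-product formula and reads off $z_{p+1}$. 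Note that the PROP viewpoint makes the cocycle property automatic, so no separate verification that the combination is closed is needed; independence of $\theta$ is likewise automatic since the computation takes place in cohomology, and passage to the stable range is harmless by the stability isomorphism (\ref{iso-stable}).

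For the classes $\overline{h_p}$ I would unwind the contraction $\xi^1_1 : \Hst(p+1,1) \to \Hst(p,0)$. On coefficients it is the trace $\varphi^1_1 : \abe^\vee \otimes \abe^{\otimes p+1} \to \abe^{\otimes p}$ that evaluates the single $\abe^\vee$-factor (the source of $h_p$) against the first $\abe$-factor of the target. Applying $\varphi^1_1$ pointwise to $z_p$ produces exactly the cocycle by which \cite{Kawa-Magnus} defines $\overline{h_p}$, namely the contraction of the leading tensor factor of the $p$-th Johnson map against its source. Hence $\overline{h_p} = \xi^1_1(h_p)$ as claimed.

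The hard part is the cochain-level comparison in the second paragraph: one must check that Kawazumi's geometrically defined recursive cocycle coincides \emph{on the nose}, or at worst up to an explicit coboundary, with $\gamma_* \lambda_*\bigl(z_1 \cup (z_p \otimes \mathrm{Id}_{\abe})\bigr)$. This forces one to track carefully the twisted diagonal action $(g_1 \cdots g_i)\cdot(-)$ entering the cup-product formula, the placement of the identity in the slot $(-)\otimes 1$ rather than $1 \otimes (-)$, and the sign conventions (consistent with $\mathfrak{S}_2$ acting on $h_1$ by the signature and with relation (\ref{relation-Kawa})). Aligning these conventions between the Magnus-expansion definition of \cite{Kawa-Magnus} and the abstract PROP operations is where essentially all the work lies; once the representatives are matched, both statements follow formally from the induction above.
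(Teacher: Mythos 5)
Your strategy for $\overline{h_p}$ is essentially the paper's (contraction $\xi^1_1$ realized by the trace map $\varphi^1_1$ of Example \ref{ex-wP} on coefficients), but for the recursion $h_{p+1}=h_1\circ(h_p\otimes 1)$ your proposal has a genuine gap at exactly the point you flag yourself: the identification of Kawazumi's $z_{p+1}$ with $\gamma_*\lambda_*\bigl(z_1\cup(z_p\otimes\mathrm{Id}_{\abe})\bigr)$ up to coboundary is introduced with ``granting this identification'' and never carried out. Since you declare that this is ``where essentially all the work lies,'' the proposal reduces the Proposition to an unproved cochain-level identity rather than proving it.

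Moreover, the difficulty you locate there does not exist in the actual proof, because your model of the source definition is inaccurate: in \cite{Kawa-Magnus} the classes $h_p$ are not defined by a recursive cocycle construction but directly at the level of cohomology classes, as $h_p=H^p(Aut(\Z^{*n}),\varsigma_p)(h_1^{\cup p})$ \cite[Theorem 4.1]{Kawa-Magnus}, where $\varsigma_p\colon Hom_{\mathcal{V}}(\kk^n,(\kk^n)^{\otimes 2})^{\otimes p}\to Hom_{\mathcal{V}}(\kk^n,(\kk^n)^{\otimes p+1})$ is the coefficient map $\varsigma_p(u_1\otimes\cdots\otimes u_p)=(u_1\otimes 1_{(\kk^n)^{\otimes p-1}})\circ\cdots\circ(u_{p-1}\otimes 1_{\kk^n})\circ u_p$ of \cite[(4.8)]{Kawa-Magnus}. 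The paper's proof then consists of two purely definitional observations: the cup power $h_1^{\cup p}$ composed with $\lambda$ is the horizontal composition of Definition \ref{def-H}, and $\varsigma_p$ satisfies the functorial identity $\varsigma_p(u_1\otimes\cdots\otimes u_p)=(\varsigma_{p-1}(u_1\otimes\cdots\otimes u_{p-1})\otimes 1_{\kk^n})\circ u_p$, so that applying $H^p(\varsigma_p)$ unwinds, via cup-then-$\gamma$ (the vertical composition), into the stated recursion $h_{p+1}=h_1\circ(h_p\otimes 1)$ in the stable range; likewise $\overline{h_p}=H^p(Aut(\Z^{*n}),\varphi^1_1)(h_p)$ is Kawazumi's definition \cite[(4.7)]{Kawa-Magnus}, so the second claim is immediate. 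No comparison of explicit Magnus-expansion cocycles, no coboundary bookkeeping, and no tracking of the twisted action in the cup-product formula is needed. To repair your argument you should replace the deferred cochain comparison by quoting these class-level definitions, at which point your first and last paragraphs become superfluous.
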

\begin{proof}
For $p\geq 2$, using the cup product we obtain classes 
$$(h_1)^{\cup p} \in H^p(Aut(\Z^{*n}), 
(Hom_{\mathcal{V}}(\kk^n, (\kk^n)^{\otimes 2})^{\otimes p}) \simeq H^p(Aut(\Z^{*n}), 
Hom_{\mathcal{V}}((\kk^n)^{\otimes p}, (\kk^n)^{\otimes 2p}) $$
where the isomorphism is induced by  $\lambda$ given in (\ref{lambda-iso}).
 In the stable range, we obtain $(h_1)^{\cup p} \in \Hst(2p,p)$ and the previous construction corresponds to the horizontal composition in the PROP $\Hst$ introduced in Definition \ref{def-H}.

Consider the maps 
$$\varsigma_p: ((\kk^n)^* \otimes (\kk^n)^{\otimes 2})^{\otimes p} \simeq Hom_\V(\kk^n,(\kk^n)^{\otimes 2})^{\otimes p} \to Hom_\V(\kk^n,(\kk^n)^{\otimes p+1}) \simeq (\kk^n)^* \otimes (\kk^n)^{\otimes p+1}$$
given by
$$\varsigma_p(u_1 \otimes u_2 \otimes \ldots\otimes u_p):=(u_1\otimes 1_{(\kk^n)^{\otimes p-1})})\circ (u_2 \otimes 1_{(\kk^n)^{\otimes p-2})}) \circ \ldots \circ (u_{p-1} \otimes 1_{\kk^n}) \circ u_p$$
where $u_i \in Hom_\V(\kk^n,(\kk^n)^{\otimes 2})$ for $1\leq i \leq p$  (see \cite[(4.8)]{Kawa-Magnus}). 

The cohomological classes $h_p \in H^p( Aut(\Z^{*n}), Hom_{\mathcal{V}}(\kk^n, (\kk^n)^{\otimes p+1}))$ are defined in \cite[Theorem 4.1]{Kawa-Magnus} by
$$h_p=H^p( Aut(\Z^{*n}), \varsigma_p) (h_1^{\cup p}).$$
 Note that
$$\varsigma_p(u_1 \otimes u_2 \otimes \ldots\otimes u_p)=(\varsigma_{p-1}(u_1 \otimes u_2 \otimes \ldots\otimes u_{p-1}) \otimes 1_{\kk^n}) \circ u_p$$

It follows  that, for $n$ big enough, the classes $h_p$ can be defined recursively by: 
$$h_{p+1}=h_1 \circ (h_p \otimes 1) \in \Hst(p+2,1).$$

Consider the map 
$$\varphi^1_1: (\kk^n)^* \otimes (\kk^n)^{\otimes p+1} \to (\kk^n)^{\otimes p}$$ 
introduced in Example \ref{ex-wP}. The reduced class $\overline{h_p} \in H^p( Aut(\Z^{*n}), (\kk^n)^{\otimes p})$ is defined, in  \cite[(4.7)]{Kawa-Magnus}, from the class $h_p$ by
$$\overline{h_p}=H^p( Aut(\Z^{*n}), \varphi^1_1)(h_p).$$ 
In the stable range, this corresponds to considering the contraction $ \xi_1^1: \Hst(p+1,1) \to \Hst(p, 0)$ introduced in Proposition \ref{H-wP}, so we have $\overline{h_p} = \xi_1^1(h_p)$.

\end{proof}

\begin{rem} \label{xi^2}
By the bi-equivariance condition for the contraction map we have:
$$\xi_1^2(h_1)=-\overline{h_1}.$$
\end{rem}

\begin{rem}\label{Rem-M-KM}
The wheeled PROP $\Hst^{MCG_{\infty,1}}$ evoked in Remark \ref{PROP-B-MCG}
is related to the graph description of the (twisted) Mumford-Morita-Miller classes by Morita and the first author \cite{Mo95, KM01}.
 Morita \cite{Mo93} extended the first Johnson homomorphism of the Torelli group $\mathcal{I}_{g,1}$ to 
a twisted cohomology class $\tilde k \in H^1(\mathcal{M}_{g,1}; 
\tfrac12\Lambda^3\abe(\pi_1(\Sigma_{g,1})))$. 
The class $h_1$ restricts to $\tilde k$ on the mapping class group 
$\mathcal{M}_{g,1}$. 
Morita \cite{Mo95} constructed cohomology classes of  
the mapping class group with trivial coefficients $\mathbb{Q}$
by contracting a power of the class 
$\tilde k$ in terms of trivalent graphs. 
More precisely, any trivalent graph $\Gamma$ with $2n$ vertices defines an $Sp(\abe(\pi_1(\Sigma_{g,1})))$-invariant linear map $\alpha_{\Gamma}: \Lambda^{2n}(\Lambda^3\abe(\pi_1(\Sigma_{g,1}))\otimes\mathbb{Q}) \to \mathbb{Q}$ by using the intersection pairing on $\abe(\pi_1(\Sigma_{g,1}))$. Then we obtain a cohomology class 
${\alpha_\Gamma}_*({\tilde k}^{2n}) \in H^{2n}(\mathcal{M}_{g,1}; \mathbb{Q})$. Morita and the first author 
proved all these classes are polynomials in the Mumford-Morita-Miller classes \cite{KM96}, and generalized his construction to 
all finite graphs and twisted Mumford-Morita-Miller classes 
\cite{KM01}. Here any graph with $n$ univalent vertices 
defines a cohomology class of $\mathcal{M}_{g,1}$ with coefficients 
in $\Lambda^n\abe(\pi_1(\Sigma_{g,1}))\otimes\mathbb{Q}$, which is proved to be a polynomial of twisted Mumford-Morita-Miller classes. 

\end{rem}
\part{Functor cohomology in $\F(\gr; \kk)$} \label{reso-abe}
The aim of this part is to introduce the wheeled PROP $\Ext$ given by $Ext$-groups in the functor category $\F(\gr)$.
\section{Projective resolution of the abelianization functor}
The bar resolution gives rise to a projective resolution of the abelianization functor of the form (see \cite[Proposition 5.1]{JP} or \cite[Section 1]{Vespa}) :
\begin{equation*}
\ldots \to P_{n+1} \to P_n \to \ldots \to P_2 \to P_1 
\end{equation*}
Considering the normalized bar resolution we obtain a variant of the previous resolution of the form:
\begin{equation} \label{resolution}
\ldots \to \bar{P}^{ \otimes n+1} \xrightarrow{d_{n}}  \bar{P}^{\otimes n} \xrightarrow{d_{n-1}}  \ldots \xrightarrow{d_2}  \bar{P}^{\otimes 2} \xrightarrow{d_1} \bar{P} 
\end{equation}
where the map $\pi: \bar{P} \to \abe$ corresponds to $1\in \Z$ via the  isomorphism $Hom(\bar{P}, \abe)\simeq \Z$ obtained using Proposition \ref{Prop-barP}.

\section{The PROP $\Ext^0$} \label{E^0}
The aim of this section is to describe the structure of the following graded PROP which can be viewed as an $Ext$-version of the endomorphism PROP:
\begin{defn} \label{PROPExt0}
The graded linear PROP $\Ext^0$ is defined by the $(\mathfrak{S}_q, \mathfrak{S}_l)$-graded bimodules 
$${\Ext^0}(q,l)=Ext^*_{\F(\gr; \kk)}(\abe^{\otimes l}, \abe^{\otimes q})$$
where the action of the symmetric group $\mathfrak{S}_q$ (resp.  $\mathfrak{S}_l$) is given by the permutations of the copies of $\abe$ in the first variable (resp. in the second variable).

The horizontal composition ${\Ext^0}(q_1,l_1) \otimes {\Ext^0}(q_2,l_2) \to {\Ext^0}(q_1+q_2,l_1+l_2)$  is given by the exterior product and the vertical composition $\Ext^0(q,l) \otimes \Ext^0(l,m) \to \Ext^0(q,m)$ is given by the Yoneda product.
\end{defn}

\begin{rem}
We warn the reader that $Hom_{\F(\gr)}(\abe^{\otimes l}, \abe^{\otimes q} )$ should not be confused with $Hom_{\mathcal{V}}(\abe^{\otimes l}, \abe^{\otimes q} )$ introduced at the end of Section \ref{def-stab}. 

\end{rem}
In \cite{Vespa}, the second author obtained the following results:
\begin{thm} \cite[Theorem 2.3]{Vespa} \label{Ext}
For $l,q \in \N$, we have an isomorphism:
$$Ext^{*}_{\F(\gr; \kk)}( \abe^{\otimes l},  \abe^{\otimes q}) \simeq \left\lbrace\begin{array}{ll}
 \kk[Surj(\textbf{q},\textbf{l})] & \text{if } *=q-l\\
 0 & \text{otherwise}
 \end{array}
 \right.$$
  where $Surj(\textbf{q},\textbf{l})$ is the set of surjections from $\textbf{q}$ to $\textbf{l}$.
  \end{thm}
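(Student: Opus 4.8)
The plan is to reduce the general computation to the single-source case $Ext^*_{\F(\gr;\kk)}(\abe,\abe^{\otimes m})$ by exploiting the exponential behaviour of $\abe^{\otimes q}$ under the sum--diagonal adjunction, and then to treat that base case by hand using the resolution (\ref{resolution}).

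First I would use the adjunction $\delta_l^* \dashv \pi_l^*$ recalled in Section \ref{Rappel-foncteurs}. Since $\abe^{\otimes l} = \delta_l^*(\abe^{\boxtimes l})$ and $\delta_l^*$ is exact and, being left adjoint to the exact functor $\pi_l^*$, preserves projectives, one obtains a natural isomorphism
$$Ext^*_{\F(\gr;\kk)}(\abe^{\otimes l}, \abe^{\otimes q}) \simeq Ext^*_{\F(\gr^{\times l};\kk)}(\abe^{\boxtimes l}, \pi_l^*(\abe^{\otimes q})).$$
The $l$-variable analogue of (\ref{tensor2}) gives $\pi_l^*(\abe^{\otimes q}) \simeq \bigoplus_{f:\textbf{q}\to \textbf{l}} \abe^{\otimes |f^{-1}(1)|} \boxtimes \cdots \boxtimes \abe^{\otimes |f^{-1}(l)|}$, the sum running over all set maps $f$. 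Over the field $\kk$ a Künneth argument in the product category $\F(\gr^{\times l};\kk)$ — the box product of the projective resolutions of the factors resolves $\abe^{\boxtimes l}$, and $Hom$ turns box products into tensor products — then yields
$$Ext^*_{\F(\gr;\kk)}(\abe^{\otimes l}, \abe^{\otimes q}) \simeq \bigoplus_{f:\textbf{q}\to \textbf{l}} \ \bigotimes_{i=1}^{l} Ext^*_{\F(\gr;\kk)}(\abe, \abe^{\otimes |f^{-1}(i)|}).$$

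Second, I would establish the base case: for $m \geq 1$ one has $Ext^*_{\F(\gr;\kk)}(\abe,\abe^{\otimes m}) \simeq \kk$ concentrated in degree $m-1$, while $Ext^*_{\F(\gr;\kk)}(\abe,\kk)=0$. The vanishing for $m=0$ is immediate from Proposition \ref{Prop-barP}, since the cross-effects $cr_d$ of a constant functor vanish for $d\geq 1$; granting this, only the surjective $f$ contribute to the sum above, and for a surjection each $|f^{-1}(i)|\geq 1$, so the tensor product lives in total degree $\sum_i(|f^{-1}(i)|-1)=q-l$ and equals $\kk$. Summing over $f\in Surj(\textbf{q},\textbf{l})$ then gives exactly $\kk[Surj(\textbf{q},\textbf{l})]$ in degree $q-l$, and the identification respects the $(\mathfrak{S}_q,\mathfrak{S}_l)$-action by naturality.

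For the base case itself I would apply $Hom_{\F(\gr;\kk)}(-,\abe^{\otimes m})$ to the projective resolution (\ref{resolution}) — legitimate because each $\bar P^{\otimes d}$ is projective, being a direct summand of $P_d\simeq P_1^{\otimes d}$ — and use Proposition \ref{Prop-barP} together with $cr_d(\abe^{\otimes m})(\Z,\ldots,\Z)\simeq \kk[Surj(\textbf{m},\textbf{d})]$ (the $d$th cross-effect of $\abe^{\otimes m}$ selects the multilinear part of $(\kk^d)^{\otimes m}$, indexed by surjections). This turns $Ext^*(\abe,\abe^{\otimes m})$ into the cohomology of the complex $\kk[Surj(\textbf{m},\textbf{1})] \to \kk[Surj(\textbf{m},\textbf{2})] \to \cdots \to \kk[Surj(\textbf{m},\textbf{m})]$ in degrees $0,\ldots,m-1$, with differential the alternating sum of the maps splitting one block of an ordered partition into two. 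The main obstacle is to show this surjection complex is acyclic except in top degree, where the cohomology is one-dimensional. I would do this either by identifying it with the normalized bar complex of the non-unital commutative operad $\mathcal{C}om$ and invoking Koszulness — consistent with the later identification of $\mathcal{P}_0$ as the operadic suspension of $\mathcal{C}om$ — or, more directly, by exhibiting an explicit contracting homotopy in degrees below $m-1$; the surviving top class is the signed sum over $Surj(\textbf{m},\textbf{m})=\mathfrak{S}_m$, which accounts for the sign action of $\mathfrak{S}_m$.
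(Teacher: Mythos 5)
The paper does not actually prove this statement: it imports it from \cite[Theorem 2.3]{Vespa}, so the comparison has to be with the cited proof and with the arguments the present paper runs for neighbouring statements. Your reduction is exactly that strategy: the adjunction $\delta_l^* \dashv \pi_l^*$, the exponential decomposition of $\pi_l^*(\abe^{\otimes q})$ (the $l$-fold version of (\ref{tensor2})), and a K\"unneth argument are precisely how the paper proves its isomorphism (\ref{eq-1}), and for your first displayed decomposition --- which is literally the paper's (\ref{eq-2}) --- the paper refers back to the proof of \cite[Theorem 2.3]{Vespa}. Your base-case setup is also sound: $\bar{P}^{\otimes d}$ is projective (a summand of $P_1^{\otimes d}\simeq P_d$), Proposition \ref{Prop-barP} identifies $Hom_{\F(\gr;\kk)}(\bar{P}^{\otimes d},\abe^{\otimes m})$ with $\kk[Surj(\textbf{m},\textbf{d})]$, the vanishing of $Ext^*_{\F(\gr;\kk)}(\abe,\kk)$ kills the non-surjective $f$, and the degree count gives $q-l$. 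One small caveat: the theorem is stated for an arbitrary commutative ring $\kk$, while you invoke K\"unneth ``over the field $\kk$''; this is repaired as in the paper's own lemma (finitely generated projective resolutions and torsion-free $Ext$ of the factors, each factor being a free module concentrated in a single degree).

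The genuine soft spot is the acyclicity of the surjection complex, which you rightly isolate as the main obstacle but only sketch, and your first proposed route does not work as stated. The bar complex of the operad $\mathcal{C}om$ in arity $m$ has homology concentrated in top degree of dimension $(m-1)!$ (the Koszul dual, a suspended co-Lie structure), not the one-dimensional answer you need, and its underlying spaces (trees, or chains of partitions with \emph{unordered} blocks) do not match $\kk[Surj(\textbf{m},\textbf{d})]$, which is spanned by \emph{ordered} set partitions; so ``identify with the bar complex of $\mathcal{C}om$ and invoke Koszulness'' proves the wrong statement. The correct identification is with the augmented cellular cochain complex of the Coxeter complex of $\mathfrak{S}_m$ (equivalently of the boundary of the permutohedron), an $(m-2)$-sphere: this yields one-dimensional cohomology in top degree $m-1$ carrying the sign representation, consistent with Theorem \ref{Ext-action}, and shellability of the Coxeter complex supplies exactly the contracting homotopy of your second alternative. (Consistency check: $\sum_{d=1}^{m}(-1)^{d-1}|Surj(\textbf{m},\textbf{d})|=(-1)^{m-1}$, matching a single class in degree $m-1$.) So your second route is viable but asserted rather than executed; with the Coxeter-complex identification substituted for the Koszulness claim, your proposal becomes a correct proof along essentially the same lines as the cited one.
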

  
  \begin{thm} \cite[Proposition 2.5]{Vespa} \label{Ext-action}
  The symmetric groups $\mathfrak{S}_q$ and $\mathfrak{S}_l$ act on $Ext^{q-l}_{\F(\gr; \kk)}( \abe^{\otimes l},  \abe^{\otimes q}) \simeq \kk[Surj(\textbf{q},\textbf{l})]$ in the following way: for $\sigma \in \mathfrak{S}_q$, $\tau_{a,b} \in \mathfrak{S}_l$ the transposition of $a$ and $b$ where $a,b \in \{1, \ldots, l\}$ and $f \in Surj(\textbf{q},\textbf{l})$
$$[f].\sigma= \underset{1\leq i \leq l}{\prod}\epsilon(\overline{\sigma_{\mid (f \circ \sigma)^{-1}(i)}})[f\circ \sigma]$$
where $\sigma_{\mid (f \circ \sigma)^{-1}(i)}:(f \circ \sigma)^{-1}(i)\to \sigma((f \circ \sigma)^{-1}(i))$ 
$$\tau_{a,b}.[f]=(-1)^{(|f^{-1}(a)|-1)(|f^{-1}(b)|-1)} [\tau_{a,b} \circ f].$$
\end{thm}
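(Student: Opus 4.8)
The plan is to compute both group actions directly on the explicit projective resolution underlying Theorem~\ref{Ext}, and to read off the required scalars as Koszul signs coming from the graded structure. Recall that a projective resolution of $\abe^{\otimes l}$ is obtained as the $l$-fold tensor product of complexes of the resolution (\ref{resolution}): writing $C_\bullet$ for (\ref{resolution}), with $C_k=\bar{P}^{\otimes(k+1)}$ placed in homological degree $k$, the degree-$m$ term of $(C^{\otimes l})_\bullet$ is a sum of factors $\bar{P}^{\otimes(k_1+1)}\otimes\cdots\otimes\bar{P}^{\otimes(k_l+1)}$ with $\sum_j k_j=m$. Applying $Hom_{\F(\gr;\kk)}(-,\abe^{\otimes q})$ and using Proposition~\ref{Prop-barP} together with the cross-effect computation $cr_d(\abe^{\otimes q})(\Z,\ldots,\Z)\simeq\kk[Surj(\textbf{q},\textbf{d})]$, each cochain group becomes a sum of copies of $\kk[Surj(\textbf{q},-)]$. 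The proof of Theorem~\ref{Ext} identifies $Ext^{q-l}_{\F(\gr;\kk)}(\abe^{\otimes l},\abe^{\otimes q})$ with $\kk[Surj(\textbf{q},\textbf{l})]$ so that a surjection $f$ corresponds to the class obtained as the horizontal (exterior) product, over the fibres of $f$, of the generators of the one-dimensional spaces $Ext^{|f^{-1}(i)|-1}_{\F(\gr;\kk)}(\abe,\abe^{\otimes|f^{-1}(i)|})$; since $\sum_i(|f^{-1}(i)|-1)=q-l$, the $i$-th fibre carries homological degree $|f^{-1}(i)|-1$.

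The first step I would isolate is the base case: for every $m\geq 1$ the one-dimensional space $Ext^{m-1}_{\F(\gr;\kk)}(\abe,\abe^{\otimes m})$ is the \emph{sign} representation of $\mathfrak{S}_m$ acting by place permutation on $\abe^{\otimes m}$. This is a finite computation, since $Ext^{m-1}$ sits in the top cochain degree and equals the cokernel of the incoming differential $\kk[Surj(\textbf{m},\textbf{m-1})]\to Hom(\bar{P}^{\otimes m},\abe^{\otimes m})=\kk[\mathfrak{S}_m]$; one checks that the place-permutation action, with its Koszul signs, descends to the alternating action on this one-dimensional cokernel. The consistency check for $m=2$ is exactly the fact, recorded in Section~\ref{Kawa-classes}, that $\mathfrak{S}_2$ acts on the degree-one generator by the signature, and the general statement is the source of the sign twist distinguishing $\mathcal{P}_0$ from $\mathcal{C}om$.

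Granting the base case, both actions in general reduce to it through the fibre decomposition. For $\sigma\in\mathfrak{S}_q$ acting on the target $\abe^{\otimes q}$, the identity $(f\circ\sigma)^{-1}(i)=\sigma^{-1}(f^{-1}(i))$ shows that $\sigma$ carries the $i$-th fibre-space of $f\circ\sigma$ onto the $i$-th fibre-space of $f$ \emph{without permuting the fibres among themselves}; inside each fibre it reorders the target factors by the bijection $\sigma_{\mid(f\sigma)^{-1}(i)}\colon (f\sigma)^{-1}(i)\to f^{-1}(i)$, which by the base case contributes the sign $\epsilon(\overline{\sigma_{\mid(f\sigma)^{-1}(i)}})$, yielding
$$[f].\sigma=\prod_{1\leq i\leq l}\epsilon(\overline{\sigma_{\mid(f\sigma)^{-1}(i)}})\,[f\circ\sigma].$$
For the transposition $\tau_{a,b}\in\mathfrak{S}_l$ acting on the source $\abe^{\otimes l}$, permuting the source factors exchanges the $a$-th and $b$-th fibre-factors of the horizontal product; as these factors carry homological degrees $|f^{-1}(a)|-1$ and $|f^{-1}(b)|-1$, the Koszul sign rule (with the convention fixed in the Notation) produces $(-1)^{(|f^{-1}(a)|-1)(|f^{-1}(b)|-1)}$ while relabelling $f\mapsto\tau_{a,b}\circ f$, which is the stated formula; since transpositions generate $\mathfrak{S}_l$ this determines the whole $\mathfrak{S}_l$-action.

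The technical heart, and the place I expect to be most delicate, is the sign bookkeeping. The genuinely nontrivial point is the base case: proving that the one-dimensional $Ext^{m-1}(\abe,\abe^{\otimes m})$ is the sign and not the trivial representation forces one to compute the top differential of the cochain complex with all its Koszul signs and verify the induced action on the cokernel is alternating. The remaining care is to confirm that no extra block-permutation sign intervenes in the $\mathfrak{S}_q$ formula — which is precisely what the matching $(f\sigma)^{-1}(i)=\sigma^{-1}(f^{-1}(i))$ guarantees — and that the Koszul signs in the $\mathfrak{S}_l$ formula are taken with respect to the same grading convention used to pin down the identification of Theorem~\ref{Ext}. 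Since the generators and all the signs involved are integral, the computation is valid for $\kk=\Z$ as well as for a field of characteristic zero.
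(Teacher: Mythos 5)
The paper itself offers no proof of this statement: it is imported verbatim from \cite[Proposition 2.5]{Vespa}, so there is no internal argument to compare against. That said, your reconstruction follows exactly the architecture that the paper uses for its neighbouring computations (Proposition \ref{cocycle}, the decomposition (\ref{eq-2}), Lemma \ref{lifting}): resolve $\abe^{\otimes l}$ by the $l$-fold tensor power of (\ref{resolution}), convert cochains into $\kk[Surj(\textbf{q},-)]$ via Proposition \ref{Prop-barP} and Corollary \ref{Cor-barP}, isolate as base case that $Ext^{m-1}_{\F(\gr;\kk)}(\abe,\abe^{\otimes m})$ is the sign representation, realized as the cokernel of $\kk[Surj(\textbf{m},\textbf{m-1})]\to\kk[\mathfrak{S}_m]$, and propagate through the fibrewise K\"unneth decomposition. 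The $\mathfrak{S}_q$ half of your argument is sound: the identity $(f\circ\sigma)^{-1}(i)=\sigma^{-1}(f^{-1}(i))$ really does show that no block-permutation Koszul sign can appear, so the total sign factors as the product of restricted signatures $\epsilon(\overline{\sigma_{\mid (f\circ\sigma)^{-1}(i)}})$. The base case is only asserted (``one checks''), and since it is the sole source of all the signs you should at least record what the image of the top differential is (e.g.\ that the classes of $\sigma$ and $\sigma\cdot(i,i+1)$ agree up to sign in the cokernel, forcing $[\sigma]=\epsilon(\sigma)[\mathrm{Id}]$); but deferring a finite computation is acceptable in a sketch, and you correctly identified it as the technical heart.

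The genuine gap is in the $\mathfrak{S}_l$ half: your claim that the Koszul rule ``produces'' the sign $(-1)^{(|f^{-1}(a)|-1)(|f^{-1}(b)|-1)}$ for an arbitrary transposition $\tau_{a,b}$ is false when $a$ and $b$ are not adjacent. Writing $d_k=|f^{-1}(k)|-1$, the symmetry of $gr\mathcal{V}$ applied to the permutation $\tau_{a,b}$ of the $l$ tensor factors yields the full sign $(-1)^{d_ad_b}\prod_{a<c<b}(-1)^{d_c(d_a+d_b)}$, with intermediate-factor contributions you have suppressed. This is not a pedantic point: take $f\in Surj(\textbf{5},\textbf{3})$ with fibre sizes $(2,2,1)$, so $(d_1,d_2,d_3)=(1,1,0)$. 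The displayed formula gives $\tau_{1,3}.[f]=+[\tau_{1,3}\circ f]$, whereas composing the (adjacent) instances of the same formula along $\tau_{1,3}=\tau_{1,2}\tau_{2,3}\tau_{1,2}$ gives signs $(-1)\cdot(+1)\cdot(+1)=-1$; so the formula read for all transpositions is not even compatible with being a group action, and no argument can establish it in that generality. The correct repair is the one your final sentence gestures at, but it must be stated as the proof, not as an afterthought: prove the formula for \emph{adjacent} transpositions $\tau_{a,a+1}$, where it coincides with the full Koszul sign, and observe that these generate $\mathfrak{S}_l$, so the action is determined (the non-adjacent case then carries the extra factors $(-1)^{d_c(d_a+d_b)}$). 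This sign slip is latent in the cited statement itself, and is of the same kind as the one the present paper explicitly corrects in \cite[Proposition 3.1]{Vespa}, so flagging it would strengthen rather than undermine your write-up. One last minor point: your consistency check via the antisymmetry of $h_1$ lives in $\Hst$ and is only linked to $\Ext^0$ through the morphism $\phi$ constructed later, so it is a heuristic, not independent evidence within the $Ext$ computation.
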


\begin{prop} \cite[Proposition 3.1]{Vespa} \label{products}
The external product
$$e: Ext^{m-l}_{\F(\gr; \kk)}( \abe^{\otimes l},  \abe^{\otimes m}) \otimes Ext^{n-p}_{\F(\gr; \kk)}( \abe^{\otimes p}, \abe^{\otimes n}) \to Ext^{m+n-l-p}_{\F(\gr; \kk)}(\abe^{\otimes l+p}, \abe^{\otimes m+n})$$
is induced by the disjoint union of sets via the isomorphism of  Theorem \ref{Ext}.

\end{prop}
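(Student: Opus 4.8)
The final statement to prove is Proposition \ref{products}: the external product on $Ext$-groups is induced by disjoint union of surjections via the isomorphism of Theorem \ref{Ext}.

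\medskip

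The plan is to compute the external product explicitly on the level of the standard projective resolution \eqref{resolution} of the abelianization functor, and then identify the result combinatorially. First I would recall that by Theorem \ref{Ext} the groups $Ext^{m-l}_{\F(\gr; \kk)}(\abe^{\otimes l}, \abe^{\otimes m})$ are concentrated in the single cohomological degree $m-l$, where they are free $\kk$-modules on the set $Surj(\textbf{m},\textbf{l})$; this rigidity is crucial, since it means the external product is determined by a map of finite free modules $\kk[Surj(\textbf{m},\textbf{l})] \otimes \kk[Surj(\textbf{n},\textbf{p})] \to \kk[Surj(\textbf{m+n},\textbf{l+p})]$, and I only need to pin down where a basis element $f \otimes g$ goes. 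The natural guess, which I would prove, is that $e(f \otimes g) = f \amalg g$, the surjection $\textbf{m+n} \to \textbf{l+p}$ that restricts to $f$ on $\textbf{m}$ and to a reindexed copy of $g$ on the remaining block.

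\medskip

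The main technical step is to present an explicit cocycle representative for the class $[f] \in Ext^{m-l}$. Using the resolution \eqref{resolution}, a class in $Ext^{m-l}_{\F(\gr; \kk)}(\abe^{\otimes l}, \abe^{\otimes m})$ is represented by a morphism $\bar P^{\otimes m} \to \abe^{\otimes m}$ (up to the differentials), and by Proposition \ref{Prop-barP} such morphisms are governed by cross-effects, hence by combinatorial data recording how the $m$ copies of $\bar P$ are grouped and sent to the $l$ outputs—precisely a surjection together with signs. I would then compute the external product by taking the tensor product of the two chosen resolutions; since $(\ref{resolution})^{\otimes (l+p)}$ (suitably normalized) provides a resolution computing $Ext^*(\abe^{\otimes l+p}, -)$, the Künneth-type comparison identifies the tensor product of the two cocycles with a cocycle for the combined resolution. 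The key point is that tensoring the representing morphisms corresponds, on the level of cross-effect/surjection data, exactly to taking the disjoint union of the two surjections, with the two index blocks placed side by side.

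\medskip

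I expect the main obstacle to be the bookkeeping of signs and the verification that the comparison map between $(\ref{resolution})^{\otimes (l+p)}$ and the resolution of $\abe^{\otimes l+p}$ is compatible with the exterior product in the way claimed; the Koszul sign rule from the graded symmetry on $gr\V$ must be tracked carefully, and one must check it agrees with the signs already recorded in Theorem \ref{Ext-action}. Because everything is concentrated in a single degree and the modules are free on surjection sets, once the cocycle-level computation is carried out the identification $e(f \otimes g) = f \amalg g$ follows by evaluating on the distinguished basis; no spectral-sequence subtleties arise since there is no room for differentials or extension problems. A cleaner alternative, which I would mention, is to invoke the exponential-type property of tensor powers recorded in \eqref{tensor2}: the decomposition $\pi_2^*(\abe^{\otimes q}) \simeq \bigoplus_{J \subset \textbf{q}} \abe^{\otimes |J|} \boxtimes \abe^{\otimes |\textbf{q}\setminus J|}$ makes the external product on $Ext$ into a direct-sum decomposition indexed by how the inputs and outputs split, and matching the surjection summands on both sides yields the disjoint-union formula directly.
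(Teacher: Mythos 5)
Your proposal cannot be compared against an in-paper proof, because the paper does not prove this statement: Proposition \ref{products} is imported verbatim from \cite[Proposition 3.1]{Vespa}, and the present paper only records it (and, in Sections \ref{explicit-classes} and \ref{operad-P0}, corrects the \emph{Yoneda}-product signs of that same cited proposition). Your reconstruction is nevertheless correct and matches both the cited proof and the machinery the paper itself deploys nearby. Your main route --- representing classes by cocycles $\bar{P}^{\otimes m}\to\abe^{\otimes m}$ on the normalized bar resolution (\ref{resolution}), identifying cochains with surjection-type data via Proposition \ref{Prop-barP}, tensoring resolutions, and using degree-concentration from Theorem \ref{Ext} to rule out any room for correction terms --- is exactly the mechanism behind Proposition \ref{cocycle} and the proof of Proposition \ref{Yoneda} in the paper. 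Your ``cleaner alternative'' is also sound and is in fact closest to how the paper argues in the analogous situation: the adjunction $\delta_2^*\dashv\pi_2^*$ together with the decomposition (\ref{tensor2}) and the K\"unneth isomorphism (justified there by freeness of the values of $\abe^{\otimes n}$ and of the $Ext$-groups from Theorem \ref{Ext}) exhibits $Ext^*_{\F(\gr;\kk)}(\abe^{\otimes l+p},\abe^{\otimes m+n})$ as a direct sum over subsets $J\subset\mathbf{m+n}$, with the external product realized as the inclusion of the summand $J=\{1,\dots,m\}$; tracking basis elements then gives $e([f]\otimes[g])=[f\amalg g]$ on the nose. One point worth sharpening: your worry about Koszul signs is well-placed in general, but for the external product with the standard ordering of blocks no sign arises --- all signs are absorbed into the symmetric-group actions of Theorem \ref{Ext-action} --- whereas the sign subtleties genuinely bite only for the \emph{vertical} (Yoneda) composition, which is precisely the part of \cite[Proposition 3.1]{Vespa} the paper flags as erroneous and repairs via Propositions \ref{cocycle}--\ref{Yoneda}.
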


For $c_{m-l}$ (resp. $c_{n-p}$) a cocycle representing a generator of $Ext^{m-l}_{\F(\gr; \kk)}( \abe^{\otimes l},  \abe^{\otimes m}) $ (resp. $Ext^{n-p}_{\F(\gr; \kk)}( \abe^{\otimes p}, \abe^{\otimes n})) $, we will denote $e([c_{m-l}], [c_{n-p}])$  by $[c_{m-l}] \otimes [c_{n-p}]$.

Note that in the description of the Yoneda product in terms of surjection given in \cite[Proposition 3.1]{Vespa} the signs are not correct. One of the aim of Sections \ref{explicit-classes} and \ref{operad-P0} is to give a corrigendum of this statement.

\subsection{Explicit classes in $Ext^{n-1}_{\F(\gr; \kk)}( \abe,  \abe^{\otimes n})$} \label{explicit-classes}
The aim of this section is to construct explicit cocycles representing the generators  in $Ext^{n-1}_{\F(\gr; \kk)}( \abe,  \abe^{\otimes n})$ and to study their behaviour via  the Yoneda product:
$$\mathcal{Y}: Ext^{n-1}_{\F(\gr; \kk)}(\abe, \abe^{\otimes n}) \otimes Ext^{1}_{\F(\gr; \kk)}(\abe^{\otimes n}, \abe^{\otimes n+1}) \to Ext^{n}_{\F(\gr; \kk)}(\abe, \abe^{\otimes n+1})$$

We begin by introducing explicit classes  in $Ext^{n-1}_{\F(\gr; \kk)}( \abe,  \abe^{\otimes n}) \simeq \kk$.
\begin{prop} \label{cocycle}
For $n \in \N \setminus \{0\}$, the morphism $\pi^{\otimes n}: \bar{P}^{\otimes n} \to \abe^{\otimes n}$ is a cocycle representing a generator of $Ext^{n-1}_{\F(\gr; \kk)}( \abe,  \abe^{\otimes n}) \simeq \kk$.
\end{prop}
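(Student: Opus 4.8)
The plan is to prove two things separately: that $\pi^{\otimes n}$ is a cocycle for the complex obtained by applying $Hom_{\F(\gr;\kk)}(-,\abe^{\otimes n})$ to the resolution (\ref{resolution}), and that the resulting cohomology class generates $Ext^{n-1}_{\F(\gr;\kk)}(\abe,\abe^{\otimes n})\simeq\kk$. First I would make the differential of (\ref{resolution}) explicit. Evaluated at a free group $G$, the functor $\bar{P}$ is the augmentation ideal $I_G=\ker(\kk[G]\to\kk)$; the differential $d_n\colon\bar{P}^{\otimes n+1}\to\bar{P}^{\otimes n}$ is the normalized bar differential, that is, an alternating sum of maps each multiplying two adjacent tensor factors through the product $\bar{P}\otimes\bar{P}\to\bar{P}$ induced at every $G$ by the multiplication of $\kk[G]$; and $\pi\colon\bar{P}\to\abe$ is the projection $I_G\twoheadrightarrow I_G/I_G^2\simeq\abe(G)$. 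With this description the cocycle condition $\pi^{\otimes n}\circ d_n=0$ is immediate: in each summand of $d_n$ one tensor factor is a product $a_ia_{i+1}\in I_G^2$, which is annihilated by $\pi$. This is the easy step and it is uniform in $n\geq 1$.

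For the second step I would identify the relevant cochain group. By Proposition \ref{Prop-barP} we have $Hom_{\F(\gr;\kk)}(\bar{P}^{\otimes n},\abe^{\otimes n})\simeq cr_n(\abe^{\otimes n})(\Z,\ldots,\Z)$, and since $\abe^{\otimes n}$ is polynomial of degree $n$ this cross-effect is exactly the multilinear part of $\abe^{\otimes n}(\Z^{*n})$, with basis $\{\bar{x}_{\sigma(1)}\otimes\cdots\otimes\bar{x}_{\sigma(n)}\mid\sigma\in\mathfrak{S}_n\}$; thus the cochain group is identified with $\kk[\mathfrak{S}_n]$. Tracing $\pi^{\otimes n}$ through this identification, by evaluating on the canonical generator $\iota_1\otimes\cdots\otimes\iota_n$ with $\iota_i=[x_i]-[1]\in I_{\Z^{*n}}$, yields $\bar{x}_1\otimes\cdots\otimes\bar{x}_n$, so $\pi^{\otimes n}$ corresponds to the identity element $e_{\mathrm{id}}\in\kk[\mathfrak{S}_n]$.

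Finally I would combine this with the computation of the $Ext$-group. By Theorem \ref{Ext} we have $Ext^{n-1}_{\F(\gr;\kk)}(\abe,\abe^{\otimes n})\simeq\kk[Surj(\textbf{n},\textbf{1})]\simeq\kk$, and by Theorem \ref{Ext-action} the group $\mathfrak{S}_n$ acts on it through the sign. Because post-composition on $\abe^{\otimes n}$ commutes with $d_n^*$ and $e_{\mathrm{id}}$ is a cocycle, the whole basis $\{e_\sigma\}$ consists of cocycles, so $\ker d_n^*=\kk[\mathfrak{S}_n]$, and the quotient map $q\colon\ker d_n^*\twoheadrightarrow Ext^{n-1}_{\F(\gr;\kk)}(\abe,\abe^{\otimes n})$ is an $\mathfrak{S}_n$-equivariant surjection for the regular action permuting the $e_\sigma$. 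Equivariance forces $q(e_\sigma)=\mathrm{sgn}(\sigma)\,q(e_{\mathrm{id}})$, so since $q\neq 0$ we get $q(e_{\mathrm{id}})\neq 0$; hence $[\pi^{\otimes n}]=q(e_{\mathrm{id}})$ is a generator. The main obstacle is precisely this second step: one must pin down the correspondence $\pi^{\otimes n}\leftrightarrow e_{\mathrm{id}}$ (the exact choice of canonical generator and of the cross-effect splitting) and confirm that the surjection onto $Ext^{n-1}$ is genuinely the sign quotient and not a map vanishing on $e_{\mathrm{id}}$. By contrast the cocycle identity is formal.
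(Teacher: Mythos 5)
Your proposal is correct and takes essentially the same route as the paper's proof: identify $Hom_{\F(\gr;\kk)}(\bar{P}^{\otimes n},\abe^{\otimes n})\simeq cr_n(\abe^{\otimes n})(\Z,\ldots,\Z)\simeq\kk[\mathfrak{S}_n]$ via Proposition \ref{Prop-barP}, match $\pi^{\otimes n}$ with the identity permutation, and conclude from the $\mathfrak{S}_n$-equivariance of the surjection from the regular representation onto $Ext^{n-1}_{\F(\gr;\kk)}(\abe,\abe^{\otimes n})\simeq\kk$. The one (harmless) divergence is the cocycle step, which you check by direct computation with the normalized bar differential and $\pi(I_G^2)=0$, whereas the paper gets it for free from the vanishing $Hom_{\F(\gr;\kk)}(\bar{P}^{\otimes n+1},\abe^{\otimes n})=0$ of Corollary \ref{Cor-barP}, since $\abe^{\otimes n}$ is polynomial of degree $n$.
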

\begin{proof}
Using the normalized bar resolution (\ref{resolution}), $Ext^{n-1}_{\F(\gr; \kk)}( \abe,  \abe^{\otimes n})$ is the homology of the complex:
$$\ldots \to Hom_{\F(\gr; \kk)}(\bar{P}^{\otimes n-1}, \abe^{\otimes n}) \xrightarrow{d} Hom_{\F(\gr; \kk)}(\bar{P}^{\otimes n}, \abe^{\otimes n}) \xrightarrow{d}  Hom_{\F(\gr; \kk)}(\bar{P}^{\otimes n+1}, \abe^{\otimes n}) \to \ldots$$
By Corollary \ref{Cor-barP},  $Hom_{\F(\gr; \kk)}(\bar{P}^{\otimes n+1}, \abe^{\otimes n}) =0$ since $\abe^{\otimes n}$ is a polynomial functor of degree $n$. It follows that $d(\pi^{\otimes n})=0$, 

 moreover, the morphism $\pi^{\otimes n}$ represents $[Id_n]$ via the isomorphism
$$Hom_{\F(\gr; \kk)}(\bar{P}^{\otimes n}, \abe^{\otimes n}) \simeq cr_{n} \abe^{\otimes n}(\Z, \ldots, \Z)\simeq \kk[\mathfrak{S}_n]$$
using the external product $Hom(\bar{P}, \abe)\otimes \ldots \otimes Hom(\bar{P}, \abe) \to Hom(\bar{P}^{\otimes n}, \abe^{\otimes n})$.
We deduce from the previous complex, an exact sequence of $\mathfrak{S}_n$-modules:
$$ Hom_{\F(\gr; \kk)}(\bar{P}^{\otimes n-1}, \abe^{\otimes n}) \to \kk[\mathfrak{S}_n] \to  Ext^{n-1}_{\F(\gr; \kk)}( \abe,  \abe^{\otimes n}) \to 0$$
It follows that $[Id_n]$ gives a generator of the $\mathfrak{S}_n$-module $Ext^{n-1}_{\F(\gr; \kk)}( \abe,  \abe^{\otimes n}) \simeq \kk$.
\end{proof}

In the next Proposition we introduce particular classes in $Ext^{1}_{\F(\gr; \kk)}(\abe^{\otimes n}, \abe^{\otimes n+1})$.
Let $Q^\bullet_n\to  \abe^{\otimes n}$ be a projective resolution of $ \abe^{\otimes n}$ and consider the resolution $\bar{P}^{\otimes \bullet +1} \otimes \abe^{\otimes n-1} \to \abe^{\otimes n}$ obtained by tensoring the complex (\ref{resolution}) with $\abe^{\otimes n-1} $. By standard homological algebra (see \cite[Comparison Theorem 2.2.6]{Weibel}) there is a chain map $\alpha^\bullet: Q^\bullet_n \to \bar{P}^{\otimes \bullet +1} \otimes \abe^{\otimes n-1}$ lifting $Id_{\abe^{\otimes n}}: \abe^{\otimes n} \to \abe^{\otimes n}$ which is unique up to chain homotopy equivalence.

\begin{lm}
The map 
$$(\pi^{\otimes 2} \otimes Id_{\abe^{\otimes n-1}}) \circ \alpha^1: Q^1_n \to \abe^{\otimes n+1}$$
represents the class of $Ext^{1}_{\F(\gr; \kk)}(\abe^{\otimes n}, \abe^{\otimes n+1})$ corresponding to the exterior product $[\pi^{\otimes 2}] \otimes [Id_{\abe^{\otimes n-1} }]$.
\end{lm}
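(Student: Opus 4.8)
The plan is to recognise $[\pi^{\otimes 2}] \otimes [\mathrm{Id}_{\abe^{\otimes n-1}}]$ as the image of $[\pi^{\otimes 2}]$ under the exact functor $- \otimes \abe^{\otimes n-1}$, and then to transport this description onto the chosen projective resolution $Q^\bullet_n$ by means of the comparison chain map $\alpha^\bullet$.

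First I would use the behaviour of the external product of Proposition \ref{products} when one of the two factors is an identity class sitting in degree $0$: for any $\xi \in Ext^i_{\F(\gr; \kk)}(A,B)$ one has
$$\xi \otimes [\mathrm{Id}_{\abe^{\otimes n-1}}] = (- \otimes \abe^{\otimes n-1})_*(\xi),$$
the image of $\xi$ under the map induced on $Ext$-groups by the functor $- \otimes \abe^{\otimes n-1}$. Since $\kk=\Q$ is a field, the objectwise tensor product $- \otimes \abe^{\otimes n-1}$ is exact on $\F(\gr; \kk)$ (every $\kk$-module is flat), so this induced map is well defined. Applying it to $\xi = [\pi^{\otimes 2}]$, which by Proposition \ref{cocycle} (with $n=2$) is represented on the resolution (\ref{resolution}) by the degree-$1$ cocycle $\pi^{\otimes 2} \colon \bar{P}^{\otimes 2} \to \abe^{\otimes 2}$, reduces the lemma to an explicit computation of this induced map.

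Next I would compute $(- \otimes \abe^{\otimes n-1})_*[\pi^{\otimes 2}]$ concretely. Tensoring the resolution (\ref{resolution}), reindexed as $\bar{P}^{\otimes \bullet+1} \to \abe$, with $\abe^{\otimes n-1}$ yields by exactness the acyclic augmented complex $\bar{P}^{\otimes \bullet+1} \otimes \abe^{\otimes n-1} \to \abe^{\otimes n}$ appearing in the statement, on which the functor carries the cocycle $\pi^{\otimes 2}$ to $\pi^{\otimes 2} \otimes \mathrm{Id}_{\abe^{\otimes n-1}} \colon \bar{P}^{\otimes 2} \otimes \abe^{\otimes n-1} \to \abe^{\otimes n+1}$ in degree $1$. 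To read off the class on the genuine projective resolution $Q^\bullet_n$, I would invoke the standard formula expressing a functorially induced $Ext$-map through a lifting of the identity: the chain map $\alpha^\bullet \colon Q^\bullet_n \to \bar{P}^{\otimes \bullet+1} \otimes \abe^{\otimes n-1}$ lifting $\mathrm{Id}_{\abe^{\otimes n}}$ exists and is unique up to homotopy by \cite[Comparison Theorem 2.2.6]{Weibel} (the source is a complex of projectives, the target is acyclic), and precomposition with it carries a representing cocycle of a class on the target complex to a representing cocycle on $Q^\bullet_n$. Hence $(- \otimes \abe^{\otimes n-1})_*[\pi^{\otimes 2}]$, and therefore $[\pi^{\otimes 2}] \otimes [\mathrm{Id}_{\abe^{\otimes n-1}}]$, is represented by $(\pi^{\otimes 2} \otimes \mathrm{Id}_{\abe^{\otimes n-1}}) \circ \alpha^1 \colon Q^1_n \to \abe^{\otimes n+1}$, as claimed; the homotopy-uniqueness of $\alpha^\bullet$ guarantees this is independent of the chosen lift.

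The main point requiring care is the first step: one must justify that pairing an arbitrary class with the identity class $[\mathrm{Id}_{\abe^{\otimes n-1}}]$ under the external product of Proposition \ref{products} coincides with the functorially induced map $(- \otimes \abe^{\otimes n-1})_*$. I would derive this from the construction of the external product as a tensor product of representing chain maps of resolutions: presenting the second factor on the trivial degree-$0$ resolution of $\abe^{\otimes n-1}$ by itself with representing cochain $\mathrm{Id}_{\abe^{\otimes n-1}}$, the total tensor-product complex collapses to $\bar{P}^{\otimes \bullet+1} \otimes \abe^{\otimes n-1}$ and the external product of cochains is exactly $\pi^{\otimes 2} \otimes \mathrm{Id}_{\abe^{\otimes n-1}}$. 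The accompanying check that $- \otimes \abe^{\otimes n-1}$ preserves acyclicity is where the hypothesis $\kk=\Q$ (flatness) enters, and no sign ambiguities arise since the second factor lies in homological degree $0$.
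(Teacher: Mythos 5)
Your proposal is correct and follows essentially the same route as the paper, whose entire proof is the one-line assertion that the lemma is ``a direct consequence of the definition of the exterior product of classes'': what you have written is exactly that definition unfolded --- pairing with the degree-zero class $[\mathrm{Id}_{\abe^{\otimes n-1}}]$ is the map induced by $- \otimes \abe^{\otimes n-1}$, transported to $Q^\bullet_n$ via the comparison map $\alpha^\bullet$, with homotopy-uniqueness guaranteeing the representative is well defined. One small remark: you do not need $\kk = \Q$ for the exactness of $- \otimes \abe^{\otimes n-1}$ (and indeed this lemma sits in a part of the paper where $\kk$ is still a general commutative ring); it suffices that $\abe^{\otimes n-1}$ takes values in free, hence flat, $\kk$-modules.
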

\begin{proof}
This is a direct consequence of the definition of the exterior product of classes.
\end{proof}
\begin{lm}
The functor $Im(d_{n-1})$ has projective resolution 
\begin{equation} \label{resolution2}
\bar{P}^{ \otimes \bullet + n}: \qquad \ldots \to \bar{P}^{ \otimes n+1} \xrightarrow{d_{n}}  \bar{P}^{\otimes n} 
\end{equation}
given by truncating (\ref{resolution}). Moreover the map $\pi^{\otimes n}: \bar{P}^{\otimes n} \to \abe^{\otimes n}$ factorizes through $Im(d_{n-1})$ giving rise to a morphism $\overline{\pi^{\otimes n}}: Im(d_{n-1}) \to \abe^{\otimes n}$ 
\end{lm}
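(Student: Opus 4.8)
The plan is to derive both assertions purely from the exactness of the augmented resolution (\ref{resolution}) together with the vanishing already exploited in the proof of Proposition \ref{cocycle}, so no new computation is needed. First I would record what exactness gives at the relevant spot: since (\ref{resolution}), augmented by $\pi: \bar{P} \to \abe$, is a projective resolution of $\abe$, the associated complex is exact, and in particular at the term $\bar{P}^{\otimes n}$ one has $\ker(d_{n-1}) = Im(d_n)$. Corestricting $d_{n-1}$ to its image yields a surjection $\tilde{d}_{n-1}: \bar{P}^{\otimes n} \to Im(d_{n-1})$ with $\ker(\tilde{d}_{n-1}) = \ker(d_{n-1}) = Im(d_n)$. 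I would then check that the truncated, augmented complex
$$\ldots \to \bar{P}^{\otimes n+1} \xrightarrow{d_n} \bar{P}^{\otimes n} \xrightarrow{\tilde{d}_{n-1}} Im(d_{n-1}) \to 0$$
is exact: exactness at $Im(d_{n-1})$ is surjectivity of $\tilde{d}_{n-1}$, exactness at $\bar{P}^{\otimes n}$ is exactly the identity $\ker(\tilde{d}_{n-1}) = Im(d_n)$, and exactness at every higher term is inherited verbatim from (\ref{resolution}). Since the terms $\bar{P}^{\otimes k}$ are projective (they are the terms of the projective resolution (\ref{resolution})), this exhibits (\ref{resolution2}) as a projective resolution of $Im(d_{n-1})$.

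For the factorization, the essential input is already available. In the proof of Proposition \ref{cocycle} one uses that the cochain differential sends $\pi^{\otimes n}$ into $Hom_{\F(\gr; \kk)}(\bar{P}^{\otimes n+1}, \abe^{\otimes n})$, which vanishes by Corollary \ref{Cor-barP} because $\abe^{\otimes n}$ is polynomial of degree $n < n+1$; concretely this says $\pi^{\otimes n} \circ d_n = 0$. Hence $Im(d_n) \subseteq \ker(\pi^{\otimes n})$, so $\pi^{\otimes n}$ kills $\ker(\tilde{d}_{n-1}) = Im(d_n)$ and therefore factors uniquely through the epimorphism $\tilde{d}_{n-1}$. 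This produces the desired $\overline{\pi^{\otimes n}}: Im(d_{n-1}) \to \abe^{\otimes n}$ with $\pi^{\otimes n} = \overline{\pi^{\otimes n}} \circ \tilde{d}_{n-1}$, and uniqueness of the factorization follows from surjectivity of $\tilde{d}_{n-1}$.

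I do not anticipate a genuine obstacle: both statements are formal consequences of exactness and of a $Hom$-vanishing that has already been invoked. The only points requiring care are bookkeeping with the indexing and orientation of the differentials $d_k: \bar{P}^{\otimes k+1} \to \bar{P}^{\otimes k}$, and invoking Corollary \ref{Cor-barP} in exactly the right degree (testing the degree-$n$ functor $\abe^{\otimes n}$ against $\bar{P}^{\otimes n+1}$). One should also make sure the corestriction $\tilde{d}_{n-1}$ is explicitly distinguished from $d_{n-1}$ viewed as a map into $\bar{P}^{\otimes n-1}$, since the factorization is asserted through the subfunctor $Im(d_{n-1})$ rather than through the ambient $\bar{P}^{\otimes n-1}$.
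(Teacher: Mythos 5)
Your proposal is correct and takes essentially the same approach as the paper: the truncation argument is the standard one the paper leaves implicit, and your factorization rests on the same key identity $\pi^{\otimes n} \circ d_n = 0$, deduced from Corollary \ref{Cor-barP} exactly as in the proof of Proposition \ref{cocycle}. The paper merely packages the factorization as the observation that $\pi^{\otimes n}$ is a degree-zero cocycle in the complex computing $Ext^i_{\F(\gr)}(Im(d_{n-1}), \abe^{\otimes n})$, which is your kernel-image argument in cohomological clothing.
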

\begin{proof}
By the projective resolution (\ref{resolution2}) we obtain the complex
$$ 
\ldots \leftarrow Hom(\bar{P}^{ \otimes n+1}, \abe^{\otimes n}) \xleftarrow{d_{n}^*}  Hom(\bar{P}^{\otimes n}, \abe^{\otimes n}) 
$$
computing $Ext^i_{\F(\gr)}(Im(d_{n-1}), \abe^{\otimes n})$. The map $\pi^{\otimes n}: \bar{P}^{\otimes n} \to \abe^{\otimes n}$ satisfies $\pi^{\otimes n} \circ d_n=0$ so it represents a cocycle in $Hom_{\F(\gr)}(Im(d_{n-1}), \abe^{\otimes n})$. We deduce that $\pi^{\otimes n}$ factorizes through $Im(d_{n-1})$ giving rise to a morphism $\overline{\pi^{\otimes n}}: Im(d_{n-1}) \to \abe^{\otimes n}$ 
\end{proof}
\begin{lm} \label{lifting}
We have a morphism of exact chain complexes 
$$\xymatrix{
\ldots \ar[r] \ar[d]&\bar{P}^{\otimes k+1}\otimes  \bar{P}^{\otimes n-1} \ar[r]^-{d_{n+k-1}} \ar[d]^{Id \otimes \pi^{\otimes n-1}}&\ldots \ar[r] \ar[d]& \bar{P}^{\otimes 2}\otimes  \bar{P}^{\otimes n-1} \ar[r]^-{d_n} \ar[d]^{Id \otimes \pi^{\otimes n-1}}&\bar{P} \otimes \bar{P}^{\otimes n-1} \ar[d]^{Id \otimes \pi^{\otimes n-1}}\ar[r]^-{d_{n-1}} &  Im(d_{n-1}) \ar[r] \ar[d]^{\overline{\pi^{\otimes n}}}& 0\\
\ldots \ar[r] &\bar{P}^{\otimes k+1} \otimes \abe^{\otimes n-1} \ar[r]_{d_{k} \otimes Id}  &\ldots \ar[r] &\bar{P}^{\otimes 2} \otimes \abe^{\otimes n-1} \ar[r]_{d_1 \otimes Id}  & \bar{P}\otimes \abe^{\otimes n-1}\ar[r]_-{\pi \otimes Id}& \abe^{\otimes n} \ar[r] & 0
}$$
\end{lm}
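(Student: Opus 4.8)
The statement asserts that the displayed diagram is commutative and that its two rows are exact, so the plan is to treat exactness and commutativity separately.

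First I would dispose of exactness. After the reindexing $\bar P^{\otimes k+1}\otimes\bar P^{\otimes n-1}=\bar P^{\otimes n+k}$, the top row is exactly the projective resolution $(\ref{resolution2})$ of $Im(d_{n-1})$ produced in the preceding lemma, augmented by the corestriction of $d_{n-1}$ onto its image $\bar P^{\otimes n}\twoheadrightarrow Im(d_{n-1})$; hence it is exact. The bottom row is the resolution $(\ref{resolution})$ of $\abe$ tensored pointwise with $\abe^{\otimes n-1}$. Evaluating at any $\Z^{*m}$, the functor $\abe^{\otimes n-1}$ takes the value $(\kk^m)^{\otimes n-1}$, a free $\kk$-module; since $-\otimes_\kk(\kk^m)^{\otimes n-1}$ is exact, the tensored complex stays exact. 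So both rows are exact.

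The heart of the argument is commutativity of the generic square, and here I would exploit the shape of the bar differential. Recall that the normalized bar differential $d_m\colon\bar P^{\otimes m+1}\to\bar P^{\otimes m}$ is an alternating sum $\sum_i(-1)^{i-1}\mu_i$ of interior face maps, where $\mu_i$ multiplies the $i$-th and $(i+1)$-st tensor factors via the product $\bar P\otimes\bar P\to\bar P$ coming from the group-algebra multiplication on $P_1$ (the outer face maps vanish because the augmentation kills the augmentation ideal $\bar P$). Writing the source of $d_{n+k-1}$ as $\bar P^{\otimes k+1}\otimes\bar P^{\otimes n-1}$, I would split $d_{n+k-1}$ into (a) the face maps $\mu_1,\dots,\mu_k$ internal to the first $k+1$ factors, (b) the single crossing map $\mu_{k+1}$ joining the $(k+1)$-st and $(k+2)$-nd factors, and (c) the face maps internal to the last $n-1$ factors. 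The decisive point is that $\pi\colon\bar P\to\abe$ kills decomposables: on $\bar P(G)$, the augmentation ideal $I$ of $\kk[G]$, one has $\pi\bigl((g-1)(h-1)\bigr)=\overline{gh}-\bar g-\bar h=0$ in $\abe(G)$, so $\pi$ factors through $I/I^2\cong\abe(G)$. Consequently, after post-composition with $Id\otimes\pi^{\otimes n-1}$, every term in (b) and (c) vanishes, since each places a product among the last $n-1$ slots, on which $\pi$ is then evaluated. The surviving terms (a) act on tensor factors disjoint from those hit by $\pi^{\otimes n-1}$ and carry the same signs as in $d_k$, so they reassemble to give
$$(Id\otimes\pi^{\otimes n-1})\circ d_{n+k-1}=(d_k\otimes Id)\circ(Id\otimes\pi^{\otimes n-1}),$$
which is precisely the commutativity of the generic square.

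For the rightmost square I would simply observe that both composites equal $\pi^{\otimes n}$: the top-then-right path is $\overline{\pi^{\otimes n}}\circ d_{n-1}$, which equals $\pi^{\otimes n}$ by the factorization defining $\overline{\pi^{\otimes n}}$ in the preceding lemma, while the left-then-bottom path $(\pi\otimes Id)\circ(Id\otimes\pi^{\otimes n-1})$ is manifestly $\pi^{\otimes n}$. The only genuine obstacle I anticipate is the bookkeeping in the third paragraph: pinning down the precise signs of the normalized bar differential and checking carefully that the crossing and second-group face maps indeed produce decomposables annihilated by $\pi^{\otimes n-1}$. Once the fact that $\pi$ vanishes on products is in hand, this is routine, and everything else is formal.
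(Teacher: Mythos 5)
Your proposal is correct and takes essentially the same route as the paper's proof: the rightmost square is handled by the identity $\overline{\pi^{\otimes n}}\circ d_{n-1}=\pi^{\otimes n}=(\pi \otimes Id)\circ(Id \otimes \pi^{\otimes n-1})$, and the generic square by the ``direct computation using the differential in the reduced bar resolution'' which the paper leaves implicit and you spell out correctly (the face maps touching the last $n-1$ factors are killed because $\pi$ factors through $I/I^2\cong\abe$, and the surviving faces $\mu_1,\dots,\mu_k$ reassemble with matching signs into $d_k\otimes Id$). Your sign bookkeeping is moreover consistent with the paper's subsequent remark that the mirror-image square, with $\pi^{\otimes n-1}\otimes Id$ on the vertical maps, commutes only up to a sign.
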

\begin{proof}
The square on the right commutes since 
$$\overline{\pi^{\otimes n}}\circ d_{n-1}=\pi^{\otimes n}=(\pi \otimes Id) \circ (Id \otimes \pi^{\otimes n-1})$$
For $k \in \mathbb{N}$, a direct computation using the differential in the reduced bar resolution gives the commutativity of the diagram
$$\xymatrix{
\bar{P}^{\otimes k+2}\otimes  \bar{P}^{\otimes n-1} \ar[r]^-{d_{n+k}} \ar[d]^{Id \otimes \pi^{\otimes n-1}}&\bar{P}^{\otimes k+1}\otimes  \bar{P}^{\otimes n-1} \ar[d]^{Id \otimes \pi^{\otimes n-1}}\\
\bar{P}^{\otimes k+2} \otimes \abe^{\otimes n-1} \ar[r]_{d_{k+1} \otimes Id}  &\bar{P}^{\otimes k+1} \otimes \abe^{\otimes n-1}.
}$$

\end{proof}
\begin{rem}
The readers's attention is drawn to the fact that the following diagram is only commutative \textit{up to a sign}
$$\xymatrix{
\bar{P}^{\otimes n-1}\otimes \bar{P}^{\otimes k+2}  \ar[r]^-{d_{n+k}} \ar[d]^{\pi^{\otimes n-1} \otimes Id}& \bar{P}^{\otimes n-1} \otimes \bar{P}^{\otimes k+1} \ar[d]^{  \pi^{\otimes n-1} \otimes Id}\\
\abe^{\otimes n-1} \otimes \bar{P}^{\otimes k+2} \ar[r]_{ Id\otimes d_k}  &\abe^{\otimes n-1}\otimes \bar{P}^{\otimes k+1}.
}$$
\end{rem}
\begin{prop} \label{Yoneda}
For $n \geq 2$
$$\mathcal{Y}([\pi^{\otimes n}], [\pi^{\otimes 2}] \otimes [Id_{\abe^{\otimes n-1} }])=[\pi^{\otimes n+1}]$$
\end{prop}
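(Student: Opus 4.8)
The plan is to compute the Yoneda product at the cochain level, using the projective resolution (\ref{resolution}) of $\abe$, and to reduce the calculation to the explicit chain map of Lemma~\ref{lifting}. Recall that to evaluate $\mathcal{Y}([\pi^{\otimes n}], \kappa)$ for a class $\kappa \in Ext^1_{\F(\gr; \kk)}(\abe^{\otimes n}, \abe^{\otimes n+1})$, one represents $[\pi^{\otimes n}]$ by the cocycle $\pi^{\otimes n}\colon \bar{P}^{\otimes n}\to \abe^{\otimes n}$ (Proposition~\ref{cocycle}), fixes the projective resolution $Q^\bullet_n\to \abe^{\otimes n}$, lifts $\pi^{\otimes n}$ to a chain map $f^\bullet\colon \bar{P}^{\otimes \bullet+n}\to Q^\bullet_n$, and forms $\bar{\kappa}\circ f^1$, where $\bar{\kappa}\colon Q^1_n\to \abe^{\otimes n+1}$ represents $\kappa$. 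For our $\kappa=[\pi^{\otimes 2}]\otimes[Id_{\abe^{\otimes n-1}}]$ the preceding lemma gives $\bar{\kappa}=(\pi^{\otimes 2}\otimes Id_{\abe^{\otimes n-1}})\circ \alpha^1$, so $\mathcal{Y}$ is the class of $(\pi^{\otimes 2}\otimes Id_{\abe^{\otimes n-1}})\circ \alpha^1\circ f^1$.

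First I would record that, since $\pi^{\otimes n}\circ d_n=0$, the cocycle factors as $\pi^{\otimes n}=\overline{\pi^{\otimes n}}\circ \bar{d}_{n-1}$ through $Im(d_{n-1})$, and that the shifted complex $\bar{P}^{\otimes \bullet+n}$ is exactly the projective resolution (\ref{resolution2}) of $Im(d_{n-1})$. Hence $f^\bullet$ is a lift of $\overline{\pi^{\otimes n}}\colon Im(d_{n-1})\to \abe^{\otimes n}$ between the resolutions $\bar{P}^{\otimes \bullet+n}\to Im(d_{n-1})$ and $Q^\bullet_n\to \abe^{\otimes n}$.

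The key step is a comparison of two lifts. Post-composing $f^\bullet$ with the comparison map $\alpha^\bullet\colon Q^\bullet_n\to \bar{P}^{\otimes \bullet+1}\otimes \abe^{\otimes n-1}$ produces a chain map $\alpha^\bullet\circ f^\bullet$ from the projective complex $\bar{P}^{\otimes \bullet+n}$ to the resolution $\bar{P}^{\otimes \bullet+1}\otimes \abe^{\otimes n-1}$ of $\abe^{\otimes n}$, covering $\overline{\pi^{\otimes n}}$. On the other hand, Lemma~\ref{lifting} provides a second chain map $g^\bullet$ between the same complexes, with components $g^k=Id_{\bar{P}^{\otimes k+1}}\otimes \pi^{\otimes n-1}$, also covering $\overline{\pi^{\otimes n}}$. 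By the uniqueness clause of the Comparison Theorem \cite[Comparison Theorem 2.2.6]{Weibel}---valid because the source is a complex of projectives and the target is exact---the maps $\alpha^\bullet\circ f^\bullet$ and $g^\bullet$ are chain homotopic. Post-composing the degree-one homotopy relation with $\pi^{\otimes 2}\otimes Id_{\abe^{\otimes n-1}}$, which is a cocycle and hence annihilates the differential of $\bar{P}^{\otimes \bullet+1}\otimes \abe^{\otimes n-1}$, the one surviving term becomes a coboundary in the complex $Hom(\bar{P}^{\otimes \bullet+1},\abe^{\otimes n+1})$ computing $Ext^*_{\F(\gr; \kk)}(\abe,\abe^{\otimes n+1})$; I would thereby conclude that $(\pi^{\otimes 2}\otimes Id_{\abe^{\otimes n-1}})\circ \alpha^1\circ f^1$ and $(\pi^{\otimes 2}\otimes Id_{\abe^{\otimes n-1}})\circ g^1$ define the same class.

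Finally, I would evaluate the second representative: since $g^1=Id_{\bar{P}^{\otimes 2}}\otimes \pi^{\otimes n-1}$ and the two maps act on disjoint tensor factors, $(\pi^{\otimes 2}\otimes Id_{\abe^{\otimes n-1}})\circ (Id_{\bar{P}^{\otimes 2}}\otimes \pi^{\otimes n-1})=\pi^{\otimes 2}\otimes \pi^{\otimes n-1}=\pi^{\otimes n+1}$, which by Proposition~\ref{cocycle} (with $n+1$ in place of $n$) represents $[\pi^{\otimes n+1}]$; this yields the asserted equality. I expect the main obstacle to be the bookkeeping of Koszul signs: as the Remark preceding the statement emphasizes, contracting tensor factors on the left introduces signs, so one must apply $\pi^{\otimes n-1}$ to the last $n-1$ factors throughout (as in Lemma~\ref{lifting}) to keep the relevant squares strictly commutative, and verify that $\alpha^\bullet\circ f^\bullet$ genuinely covers $\overline{\pi^{\otimes n}}$ without a sign twist.
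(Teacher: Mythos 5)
Your proof is correct and follows essentially the same route as the paper: represent the classes by $\pi^{\otimes n}$ and $(\pi^{\otimes 2}\otimes Id_{\abe^{\otimes n-1}})\circ\alpha^1$, lift $\overline{\pi^{\otimes n}}$ via the Comparison Theorem (your $f^\bullet$ is the paper's $\beta^\bullet$), and compare $\alpha^\bullet\circ f^\bullet$ with the explicit chain map of Lemma~\ref{lifting} using uniqueness up to homotopy. The only difference is that you spell out why post-composing the homotopy with the cocycle $\pi^{\otimes 2}\otimes Id_{\abe^{\otimes n-1}}$ leaves only a coboundary, a verification the paper leaves implicit in its appeal to homotopy uniqueness.
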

\begin{proof}
By \cite[Comparison Theorem 2.2.6]{Weibel} there is a chain map $\beta^\bullet$ lifting $\overline{\pi^{\otimes n}}: Im(d_{n-1}) \to \abe^{\otimes n}$. We obtain the following  commutative diagram
$$\xymatrix{
\ldots \ar[r] \ar[d]& \bar{P}^{\otimes n+1} \ar[r]^-{d_n} \ar[d]^{\beta^1}& \bar{P}^{\otimes n} \ar[d]^{\beta^0} \ar[r]^-{d_{n-1}} &  Im(d_{n-1}) \ar[r] \ar[d]^{\overline{\pi^{\otimes n}}}& 0\\
\ldots \ar[r] \ar[d]&Q_n^1  \ar[r]  \ar[d]^{\alpha^1}& Q_n^0 \ar[r]\ar[d]^{\alpha^0}& \abe^{\otimes n} \ar[r] \ar[d]^{Id}& 0\\
\ldots \ar[r] &\bar{P}^{\otimes 2} \otimes \abe^{\otimes n-1} \ar[r]  & \bar{P}^{\otimes 1} \otimes \abe^{\otimes n-1}\ar[r]& \abe^{\otimes n} \ar[r] & 0
}$$
By the construction of the Yoneda product,  $(\pi^{\otimes 2} \otimes Id_{\abe^{\otimes n-i}}) \circ \alpha^1 \circ \beta^1$ is a cocycle representing $\mathcal{Y}([\pi^{\otimes n}], [(\pi^{\otimes 2} \otimes Id_{\abe^{\otimes n-i}}) \circ \alpha^1])$.

Since the chain map lifting $\overline{\pi^{\otimes n}}: Im(d_{n-1}) \to \abe^{\otimes n}$ is unique up to chain homotopy equivalence, using Lemma \ref{lifting} we have
$$[(\pi^{\otimes 2} \otimes Id_{\abe^{\otimes n-i}}) \circ \alpha^1 \circ \beta^1]=[(\pi^{\otimes 2} \otimes Id_{\abe^{\otimes n-i}}) \circ (Id \otimes \pi^{\otimes n-1})] =[\pi^{\otimes n+1}].$$
\end{proof}


\subsection{The operad $\mathcal{P}_0$} \label{operad-P0}
In \cite[Proposition 3.5]{Vespa} the second author proved that the graded PROP $\Ext^0$ is freely-generated by its underlying operad $\mathcal{P}_0$.  Using Theorems \ref{Ext} and \ref{Ext-action},   $\mathcal{P}_0$  is the graded operad such that $\mathcal{P}_0(0)=0$ and for $k>0$, $\mathcal{P}_0(k)$ is the sign representation of $\mathfrak{S}_k$ placed in cohomological degree $k-1$ and $0$ in other degrees. The aim of this section is to give an explicit description of this operad, in particular to describe the composition which is induced by the Yoneda product.

\begin{defn} \label{operade-Q}
The operad $\mathcal{Q}$ is the  quadratic graded operad  generated by one antisymmetric binary operation $\mu$ in degree $1$ subject to the quadratic relation:
$$\mu \underset{1}{\circ} \mu =-\mu \underset{2}{\circ} \mu.$$
Pictorially, we have

\[
	\vcenter{\hbox{\begin{tikzpicture}[baseline=1.8ex,scale=0.3]
	\draw (2,0) -- (2,2);
	\draw (3,4) -- (2,2);
	\draw (1,4) -- (2,2);
	\draw (1,4) -- (2,6);
	\draw (1,4) -- (0,6);
	\draw[fill=white] (2,2) circle (8pt);
	\draw[fill=white] (1,4) circle (8pt);
		\end{tikzpicture}}}
		\qquad =\qquad 
		-\vcenter{\hbox{\begin{tikzpicture}[baseline=1.8ex,scale=0.3]
	\draw (2,0) -- (2,2);
	\draw (3,4) -- (2,2);
	\draw (1,4) -- (2,2);
	\draw (3,4) -- (4,6);
	\draw (3,4) -- (2,6);
	\draw[fill=white] (2,2) circle (8pt);
	\draw[fill=white] (3,4) circle (8pt);
		\end{tikzpicture}}}
				\]
\end{defn}
\begin{prop} \label{operade-gr}
The operad $\mathcal{P}_0$ is isomorphic to  $\mathcal{Q}$.
\end{prop}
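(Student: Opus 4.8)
The plan is to build the isomorphism by sending the generator $\mu$ of $\mathcal{Q}$ to a chosen generator of the binary part of $\mathcal{P}_0$, to check that the defining quadratic relation is respected, and then to conclude by a dimension count in each arity. First recall from Theorems \ref{Ext} and \ref{Ext-action} together with Proposition \ref{cocycle} that $\mathcal{P}_0(n)=Ext^{n-1}_{\F(\gr;\kk)}(\abe,\abe^{\otimes n})$ is one-dimensional, concentrated in cohomological degree $n-1$, carries the sign representation of $\mathfrak{S}_n$, and is spanned by the class $[\pi^{\otimes n}]$. Set $\nu:=[\pi^{\otimes 2}]\in\mathcal{P}_0(2)$; this is an antisymmetric operation in degree $1$, so it matches the generator $\mu$ of Definition \ref{operade-Q}. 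Since $\mathcal{Q}$ is the free graded operad on one antisymmetric degree-$1$ binary operation modulo its quadratic relation, to define an operad morphism $\Phi\colon\mathcal{Q}\to\mathcal{P}_0$ with $\mu\mapsto\nu$ it suffices to verify that relation holds for $\nu$.

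Thus the first real step is to check $\nu\circ_1\nu=-\nu\circ_2\nu$ inside the one-dimensional space $\mathcal{P}_0(3)=\kk[\pi^{\otimes 3}]$. Proposition \ref{Yoneda} (case $n=2$) gives $\nu\circ_1\nu=[\pi^{\otimes 3}]$, since $[\pi^{\otimes 2}]\otimes[Id_{\abe}]$ represents $\nu$ placed in the first input. For the other composite I would use antisymmetry $\nu^{(12)}=-\nu$ together with the equivariance of operadic composition, rewriting $\nu\circ_2\nu=-\,(\nu\circ_1\nu)^{\beta}$, where $\beta\in\mathfrak{S}_3$ is the block permutation exchanging the size-$2$ and size-$1$ blocks; as $\beta$ is a $3$-cycle, hence even, and $\mathcal{P}_0(3)$ is the sign representation, $(\nu\circ_1\nu)^{\beta}=[\pi^{\otimes 3}]$, so $\nu\circ_2\nu=-[\pi^{\otimes 3}]=-\,\nu\circ_1\nu$. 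This is consistent with relation (\ref{relation-Kawa}), which is precisely the image of the defining relation of $\mathcal{Q}$. Hence $\Phi$ is well defined.

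It then remains to show $\Phi$ is an isomorphism in each arity. For surjectivity, the recursion $[\pi^{\otimes n+1}]=[\pi^{\otimes n}]\circ_1\nu$ of Proposition \ref{Yoneda} shows that the iterated left-comb composite of copies of $\mu$ maps to $[\pi^{\otimes n}]$, which generates $\mathcal{P}_0(n)$; so $\Phi$ is onto. For injectivity I would show $\dim_{\kk}\mathcal{Q}(n)\le 1$: the space $\mathcal{Q}(n)$ is spanned by labelled binary trees with $n$ leaves, antisymmetry of $\mu$ reduces any leaf relabelling to a sign, and the quadratic relation rewrites every such tree into the left comb with increasing leaf labels, again up to sign. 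Hence $\mathcal{Q}(n)$ is spanned by a single element, and being surjective onto the one-dimensional $\mathcal{P}_0(n)$ forces each $\Phi\colon\mathcal{Q}(n)\to\mathcal{P}_0(n)$ to be an isomorphism, so $\Phi$ is an isomorphism of operads.

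The main obstacle is the determination of the sign in the quadratic relation. Everything else is formal once $\mathcal{P}_0(n)$ is known to be the one-dimensional sign representation, but the precise coefficient $-1$ is exactly what cannot be read off from the abstract $\mathfrak{S}_n$-module structure; it is forced by the explicit cocycles $\pi^{\otimes n}$ and the comparison (lifting) maps of Section \ref{explicit-classes}, which is why the sign corrigendum announced before Proposition \ref{cocycle} is needed to make this verification rigorous.
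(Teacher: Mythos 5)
Your proof is correct and takes essentially the same route as the paper: you send $\mu$ to $[\pi^{\otimes 2}]$, deduce the quadratic relation from antisymmetry together with the sign-representation structure of $\mathcal{P}_0(3)$ (the paper performs the same sign-chase concretely via the surjection action of Theorem \ref{Ext-action} and the cyclic permutations $c_2$, $c_3$, while you package it as operadic equivariance plus an even block permutation), and you obtain binarity/surjectivity from Proposition \ref{Yoneda} and one-dimensionality, your explicit $\dim_\kk \mathcal{Q}(n)\leq 1$ rewriting argument merely making precise what the paper leaves implicit. One quibble with your closing commentary: as your own second paragraph demonstrates, the coefficient $-1$ in the relation \emph{is} forced by the abstract $\mathfrak{S}_n$-module structure (antisymmetry, equivariance of composition, sign representation in arity $3$); what genuinely requires the explicit cocycles and lifting maps of Section \ref{explicit-classes} is the non-vanishing of the composite, $\nu \underset{1}{\circ} \nu = [\pi^{\otimes 3}] \neq 0$, i.e.\ Proposition \ref{Yoneda}, on which your surjectivity step rests.
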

\begin{proof}
We show that there is a morphism of operads $f: \mathcal{Q} \to \mathcal{P}_0$ given on the generator $\mu$ by $f_2(\mu)=[\pi^{\otimes 2}]$ where $[\pi^{\otimes 2}]$ is a  generator, in degree 1,  of $\Ext^0(2,1)=\mathcal{P}_0(2)$ defined in Proposition \ref{cocycle} which is antisymmetric by Theorem \ref{Ext-action}.

Before proving that $[\pi^{\otimes 2}]$ satisfies the quadratic relation satisfied by $\mu$, note that the partial composition operations in $\mathcal{P}_0$ are given by the restriction of the categorical composition induced  by the Yoneda product
$$\mathcal{Y}: \mathcal{P}_0(n)\underset{\mathfrak{S}_n}{\otimes} \Ext^0(n+1,n) \to \mathcal{P}_0(n+1)$$
More explicitly the partial composition $\underset{i}{\circ}$ is obtained by restricting to the inclusion of $\kk$-modules:
$$\xi^n_i: \mathcal{P}_0(2)\simeq \mathcal{P}_0(1)^{\otimes i-1} \otimes \mathcal{P}_0(2) \otimes \mathcal{P}_0(1)^{\otimes n-i} \hookrightarrow \Ext^0(n+1,n)$$
given by the external product.

For $c_2 \in \mathfrak{S}_2$ and $c_3\in \mathfrak{S}_3$ the cyclic permutations given by $i \mapsto i+1$, by Theorem \ref{Ext-action} we have in $Ext^{1}_{\F(\gr; \kk)}(\abe^{\otimes 2}, \abe^{\otimes 3})$:
$$c_2.([Id_{\abe}] \otimes [\pi^{\otimes 2}]).c_3=[\pi^{\otimes 2}] \otimes [Id_{\abe}]$$
So:
$$\mathcal{Y}([\pi^{\otimes 2}], [\pi^{\otimes 2}] \otimes [Id_{\abe}])=\mathcal{Y}([\pi^{\otimes 2}], c_2.([Id_{\abe}] \otimes [\pi^{\otimes 2}]).c_3)=\mathcal{Y}([\pi^{\otimes 2}]. c_2, ([Id_{\abe}] \otimes [\pi^{\otimes 2}]))
.c_3$$
$$=\mathcal{Y}([\pi^{\otimes 2}](-1), ([Id_{\abe}] \otimes [\pi^{\otimes 2}]))=-\mathcal{Y}([\pi^{\otimes 2}], ([Id_{\abe}] \otimes [\pi^{\otimes 2}]))$$
using that $\mathcal{P}_0(k)$ is the sign representation in degree $k-1$. We deduce that $[\pi^{\otimes 2}]$ satisfies the quadratic relation.

The fact that the operad $\mathcal{P}_0$ is binary follows from Proposition \ref{Yoneda} and the fact that $\mathcal{P}_0(n+1)$ is $\kk$ concentrated in degree $n$. 

We deduce that the morphism of operads $f: \mathcal{Q} \to \mathcal{P}_0$ is an isomorphism.

\end{proof}

In Proposition \ref{operad-sus} we give a more conceptual description of the operad $\mathcal{P}_0$ using the notion of \textit{operadic suspension},  recalled in the next paragraphs following \cite[Section 7.2.2]{LV}.

Let $\mathcal{S}$ be the underlying operad of the endomorphism PROP (see Example \ref{ex-P}) associated with the graded vector space $s\kk$ (i.e. the graded vector space concentrated in homological  degree one and such that $(s\kk)_1=\kk$). Explicitly, as a representation of $\mathfrak{S}_n$, $\mathcal{S}(n)=Hom_{gr\V}((s\kk)^{\otimes n}, s\kk)$ is the signature representation concentrated in homological degree $-n+1$.

For $\mathcal{P}$ and  $\mathcal{Q}$ two operads, the Hadamard tensor product $\mathcal{P} \underset{H}{\otimes} \mathcal{Q}$ is an operad such that
$$(\mathcal{P} \underset{H}{\otimes} \mathcal{Q})(n)=\mathcal{P}(n) \underset{H}{\otimes} \mathcal{Q}(n)$$
where the action of $\mathfrak{S}_n$ is the diagonal action. The unit of the Hadamard product is the operad $u \mathcal{C}om$ of \textit{unital} commutative algebras. In particular we have $u \mathcal{C}om(0) =\kk$.

For $\mathcal{P}$ an operad, the \textit{operadic suspension} of $\mathcal{P}$ is the operad $\mathcal{S} \underset{H}{\otimes} \mathcal{P}$.

The operad $\mathcal{P}_0$ being defined using $Ext$-groups, it is naturally graded with cohomological degree. So $\mathcal{P}_0(n)$ is a graded $\kk$-module concentrated in \textit{cohomological} degree $n-1$ and so in \textit{homological} degree $1-n$. 

Let $\mathcal{C}om$ be the operad of  \textit{non-unital} commutative algebras (thus $\mathcal{C}om(0) =0$).
 In the next Proposition we consider $\mathcal{P}_0$ with its homological degree.

\begin{prop}\label{operad-sus} 
The operad $\mathcal{P}_0$ is the operadic suspension of the operad $\mathcal{C}om$. In other words, we have an isomorphism of operad
$$\mathcal{P}_0 \simeq \mathcal{S} \underset{H}{\otimes} \mathcal{C}om.$$
\end{prop}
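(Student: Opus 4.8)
The plan is to combine the explicit quadratic presentation of $\mathcal{P}_0$ from Proposition \ref{operade-gr} with a direct computation of the effect of operadic suspension on the quadratic data of $\mathcal{C}om$. First I would record the arity-wise comparison. For $n \geq 1$ the space $\mathcal{S}(n)$ is the signature representation in homological degree $1-n$, while $\mathcal{C}om(n)=\kk$ is the trivial representation in degree $0$; hence the diagonal action on $(\mathcal{S} \underset{H}{\otimes} \mathcal{C}om)(n)=\mathcal{S}(n)\underset{H}{\otimes}\mathcal{C}om(n)$ is again the signature representation in homological degree $1-n$, and for $n=0$ the space vanishes since $\mathcal{C}om(0)=0$. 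This is exactly the underlying graded $\mathfrak{S}_n$-module of $\mathcal{P}_0$, so the two operads agree arity by arity and only the operadic compositions remain to be matched.

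Next I would exploit that both operads are binary and quadratic. The operad $\mathcal{C}om$ is generated by a symmetric binary operation $m$ in degree $0$ subject to associativity $m \underset{1}{\circ} m = m \underset{2}{\circ} m$. Writing $\nu$ for a generator of the one-dimensional space $\mathcal{S}(2)$ (signature, homological degree $-1$), I expect $\mathcal{S} \underset{H}{\otimes} \mathcal{C}om$ to be generated by the binary operation $\mu' := \nu \otimes m$, which is \emph{antisymmetric} of cohomological degree $1$, the signature factor $\mathcal{S}(2)$ twisting the symmetric $m$ into an antisymmetric operation. This already matches the generator $\mu$ of the operad $\mathcal{Q}$ of Definition \ref{operade-Q}.

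The key point is the sign in the suspended relation. Realising $\mathcal{S}=\mathcal{E}nd_{s\kk}$ and evaluating on $s\otimes s\otimes s$ with the Koszul sign rule, one finds $\nu \underset{1}{\circ}\nu = -\,\nu \underset{2}{\circ}\nu$ in $\mathcal{S}(3)$, the sign $(-1)^{|\nu|\,|s|}=-1$ arising from moving $\nu$ (degree $-1$) past the first factor $s$ (degree $1$) in $\nu \underset{2}{\circ}\nu$. Since $\mathcal{C}om$ is concentrated in degree $0$, the Hadamard composition carries no extra Koszul sign, so combining this sign relation with the associativity relation in $\mathcal{C}om$ gives $\mu' \underset{1}{\circ}\mu' = -\,\mu' \underset{2}{\circ}\mu'$, precisely the defining relation of $\mathcal{Q}$. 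I expect this sign bookkeeping to be the only real obstacle: getting the $-1$ in $\nu \underset{1}{\circ}\nu = -\,\nu \underset{2}{\circ}\nu$ and confirming that the degree-$0$ concentration of $\mathcal{C}om$ kills all other signs is exactly what produces the correct quadratic relation.

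Finally I would assemble these facts. By the universal property of the quadratic operad $\mathcal{Q}$, sending $\mu \mapsto \mu' = \nu \otimes m$ defines a morphism of operads $\Phi: \mathcal{Q} \to \mathcal{S} \underset{H}{\otimes} \mathcal{C}om$. In each arity $n$ both sides are one-dimensional and concentrated in homological degree $1-n$ (for $\mathcal{Q}$ via the isomorphism $\mathcal{Q}\cong\mathcal{P}_0$ of Proposition \ref{operade-gr}, and for the target by the first step). The image under $\Phi$ of a generator of $\mathcal{Q}(n)$ is an $n$-fold composite of $\mu'$, whose two tensor factors are nonzero composites of $\nu$ in $\mathcal{E}nd_{s\kk}$ and of $m$ in $\mathcal{C}om$, hence nonzero; so $\Phi$ is an isomorphism in every arity and therefore an isomorphism of operads. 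Composing with $\mathcal{P}_0 \cong \mathcal{Q}$ yields the desired isomorphism $\mathcal{P}_0 \simeq \mathcal{S} \underset{H}{\otimes} \mathcal{C}om$.
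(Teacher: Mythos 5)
Your proposal is correct and takes essentially the same approach as the paper: compare the underlying graded $\mathfrak{S}_n$-modules arity by arity (using $\mathcal{C}om(0)=0$), invoke the quadratic presentation $\mathcal{P}_0\simeq\mathcal{Q}$ of Proposition \ref{operade-gr}, and verify that the generator $\nu$ of $\mathcal{S}(2)$, realized in the endomorphism operad of $s\kk$, satisfies $\nu \underset{1}{\circ} \nu = -\nu \underset{2}{\circ} \nu$. Your explicit Koszul-sign computation and the final nonvanishing-of-composites check merely spell out steps the paper's proof leaves implicit.
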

\begin{proof}
Recall that $\mathcal{P}_0(0)=0$ and for $n>0$, $\mathcal{P}_0(n)$ is the sign representation of $\mathfrak{S}_n$ placed in homological degree $1-n$ and $0$ in other degrees.\\
The underlying $\mathfrak{S}_n$-modules are isomorphic since $\mathcal{C}om(0)=0$ and for $n\geq 1$, $\mathcal{P}_0(n) \simeq  \mathcal{S}(n)$.

By Proposition \ref{operade-gr}, $\mathcal{P}_0$ is a quadratic operad generated by $\mu$ such that 
$\mu \underset{1}{\circ} \mu =-\mu \underset{2}{\circ} \mu.$ Define $\mathcal{P}_0(2) \to \mathcal{S}(2)$ sending $\mu$ to the generator $\nu$ of $\mathcal{S}(2)$. By a similar argument as in the proof of Proposition  \ref{operade-gr} for the endomorphism PROP associated with the graded vector space $s\kk$, one proves that $\nu$ satisfies the relation $\nu \underset{1}{\circ} \nu =-\nu \underset{2}{\circ} \nu$. This implies the isomorphism in the statement.
\end{proof}

\begin{rem}
We denote by $s_i^n \in Surj(n,n-1)$ the unique surjection preserving the natural order and such that $s_i^n(i)=s_i^n (i+1)=i$. The Yoneda product gives, via the isomorphism given in Theorem \ref{Ext}, a map
$$Y: \kk[Surj(m,l)] \otimes \kk[Surj(n,m)] \to \kk[Surj(n,l)]$$
For $m=2$, $l=1$ and $n=3$, the quadratic relation in the operad $\mathcal{P}_0$ corresponds to: $Y([s_1^2] \otimes [s_2^3])=-Y([s^2_1] \otimes [s_1^3])$ showing that the signs given in  \cite[Proposition 3.1]{Vespa} are not correct.
\end{rem}

The following maps will be used in Proposition \ref{wheeled-operad} in order to define the contraction maps in the wheeled operad $\mathcal{P}$.
\begin{defn} \label{contr-def}
For $1 \leq i\leq n$, let $\xi^i: \mathcal{P}_0(n) \to \mathcal{P}_0(n-1)$ be the morphism of graded vector spaces of degree $-1$ given by:
$$\xi^i([\pi^{\otimes n}])=(-1)^i[\pi^{\otimes n-1}].$$ 
\end{defn}

\begin{prop} \label{contr-equ}
For $1 \leq i\leq n$, the contraction map $\xi^i :\mathcal{P}_0(n) \to \mathcal{P}_0(n-1)$ is equivariant i.e. for $\sigma \in \mathfrak{S}_n$, the following diagram is commutative 
$$\xymatrix{
\mathcal{P}_0(n) \ar[r]^-{\xi^i}  \ar[d]_{\sigma} & \mathcal{P}_0(n-1)  \ar[d]^{{\sigma}^{(i)}} \\
\mathcal{P}_0(n) \ar[r]_{\xi^{\sigma(i)}} &\mathcal{P}_0(n-1)  
}
$$
where $\sigma^{(i)} \in \mathfrak{S}_{n-1}$ is the permutation $\sigma: \textbf{n}\setminus\{i\} \to \textbf{n}\setminus\{ \sigma(i)\}$  considered as reindexed.
\end{prop}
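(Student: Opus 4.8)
The plan is to use that each $\mathcal{P}_0(m)$ is one-dimensional, so the whole square is determined by its value on the generator $[\pi^{\otimes m}]$, and the claimed commutativity collapses to a single identity between signs. First I would record the $\mathfrak{S}_m$-action on the generator. Since $\mathcal{P}_0(m)=\Ext^0(m,1)=Ext^{m-1}_{\F(\gr;\kk)}(\abe,\abe^{\otimes m})\simeq \kk[Surj(\mathbf{m},\mathbf{1})]$ and $Surj(\mathbf{m},\mathbf{1})$ is the singleton $\{f\}$, the formula of Theorem \ref{Ext-action} specializes, with $l=1$ and $(f\circ\sigma)^{-1}(1)=\mathbf{m}$, to $[f].\sigma=\epsilon(\sigma)[f]$. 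Hence $\sigma$ acts on $[\pi^{\otimes m}]$ by the signature $\epsilon(\sigma)$ (left and right actions agree since $\epsilon(\sigma)=\epsilon(\sigma^{-1})$), and the same holds for $\mathfrak{S}_{n-1}$ acting on $\mathcal{P}_0(n-1)$.

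Next I would chase the generator $[\pi^{\otimes n}]$ around both composites. Going right and then down, Definition \ref{contr-def} and the action above give $\sigma^{(i)}\cdot \xi^i([\pi^{\otimes n}])=(-1)^i\,\epsilon(\sigma^{(i)})\,[\pi^{\otimes n-1}]$. Going down and then right gives $\xi^{\sigma(i)}(\sigma\cdot[\pi^{\otimes n}])=\epsilon(\sigma)\,(-1)^{\sigma(i)}\,[\pi^{\otimes n-1}]$. Using $(-1)^{\sigma(i)-i}=(-1)^{\sigma(i)+i}$, the square therefore commutes if and only if
$$\epsilon(\sigma^{(i)})=(-1)^{\,i+\sigma(i)}\,\epsilon(\sigma).$$

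The remaining step, which is the only genuine content, is to establish this signature identity. I would deduce it from the Laplace cofactor expansion of the permutation matrix $P_\sigma$, whose determinant is $\epsilon(\sigma)$: expanding along row $i$, the unique nonzero entry is $(P_\sigma)_{i,\sigma(i)}=1$, so $\epsilon(\sigma)=(-1)^{\,i+\sigma(i)}M_{i,\sigma(i)}$, where the minor $M_{i,\sigma(i)}$ is the determinant of the matrix obtained by deleting row $i$ and column $\sigma(i)$. By the very definition of $\sigma^{(i)}$ as the order-preserving reindexing of $\sigma\colon \mathbf{n}\setminus\{i\}\to \mathbf{n}\setminus\{\sigma(i)\}$, this deleted matrix is the permutation matrix of $\sigma^{(i)}$, so $M_{i,\sigma(i)}=\epsilon(\sigma^{(i)})$, and the identity follows (the square of $(-1)^{i+\sigma(i)}$ being $1$). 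As an alternative I would note a direct inversion count: the inversions of $\sigma^{(i)}$ are exactly those of $\sigma$ not involving the index $i$, and counting the inversions through $i$ shows their number is congruent to $i+\sigma(i)$ modulo $2$, since bijectivity of $\sigma$ forces $\#\{j>i:\sigma(j)<\sigma(i)\}-\#\{j<i:\sigma(j)>\sigma(i)\}=\sigma(i)-i$. I do not expect any real obstacle here; the work lies entirely in keeping the sign conventions of Definition \ref{contr-def} and Theorem \ref{Ext-action} consistent throughout the chase.
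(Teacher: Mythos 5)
Your proposal is correct, and its first half coincides with the paper's proof: both exploit that $\mathcal{P}_0(m)$ is the sign representation concentrated in a single degree, chase the generator around the square, and collapse commutativity to the single sign identity $\epsilon(\sigma)(-1)^{\sigma(i)}=\epsilon(\sigma^{(i)})(-1)^i$. Where you genuinely diverge is in the proof of that identity. The paper stays inside the symmetric group: it forms $\alpha=(1,2)\circ\cdots\circ(\sigma(i)-1,\sigma(i))\circ\sigma\circ(i-1,i)\circ\cdots\circ(1,2)$, built from $\sigma(i)-1$ and $i-1$ adjacent transpositions so that $\alpha$ fixes $1$ and $\alpha^{(1)}=\sigma^{(i)}$, whence $\epsilon(\sigma^{(i)})=\epsilon(\alpha)=(-1)^{\sigma(i)-1}\epsilon(\sigma)(-1)^{i-1}$. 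Your route via the Laplace expansion of the permutation matrix along row $i$ proves the same identity in one line, since deleting row $i$ and column $\sigma(i)$ produces, by the very definition of the order-preserving reindexing, the permutation matrix of $\sigma^{(i)}$; your alternative inversion count (the inversions of $\sigma$ through the index $i$ number $i+\sigma(i)$ modulo $2$) is also correct, and I checked the bijectivity identity $\#\{j>i:\sigma(j)<\sigma(i)\}-\#\{j<i:\sigma(j)>\sigma(i)\}=\sigma(i)-i$ that underlies it. The determinant argument is arguably the most standard and economical; the paper's transposition bookkeeping has the mild virtue of requiring no linear algebra. You were also right to flag the left/right action point: Theorem \ref{Ext-action} gives a right $\mathfrak{S}_q$-action, and $\epsilon(\sigma)=\epsilon(\sigma^{-1})$ is exactly what lets you read the vertical maps of the diagram either way, so there is no gap there.
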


\begin{proof}
Since $\mathfrak{S}_n$ acts on $\mathcal{P}_0(n)$ by the signature, we require to prove that
$$ \epsilon(\sigma) (-1)^{\sigma(i)}=\epsilon({\sigma}^{(i)})(-1)^i.$$
Let $\alpha \in \mathfrak{S}_n$ be the permutation given by
$$\alpha=(1,2) \circ (2,3) \circ \ldots \circ (\sigma(i)-1, \sigma(i)) \circ \sigma \circ (i-1, i) \circ \ldots \circ (1,2)$$
where $(l,l+1)$ is the permutation exchanging $l$ and $l+1$. We have
$$\epsilon(\alpha)=(-1)^{\sigma(i)-1} \epsilon(\sigma) (-1)^{i-1}$$
and for ${\alpha}^{(1)}$ the permutation  ${\alpha}:  \textbf{n}\setminus\{1\} \to \textbf{n}\setminus\{ \alpha(1)=1\}$ considered as reindexed. We have ${\alpha}^{(1)}={\sigma}^{(i)}$ and $\alpha=Id_{\{1\}} \oplus {\alpha}^{(1)}$. Hence
$$\epsilon({\sigma}^{(i)})=\epsilon({\alpha}^{(1)})=\epsilon(\alpha)=(-1)^{\sigma(i)-1} \epsilon(\sigma) (-1)^{i-1}.$$
\end{proof}


\section{The PROP $\Ext$} \label{PROP-E}

In the rest of the paper $\kk$ is a field of characteristic zero. The previous condition on $\kk$  allows us to use the computation of $Ext^{*}_{\F(\gr)}(\Lambda^j  \abe,  \abe^{\otimes q}) $ given in \cite{Vespa} (see Proposition \ref{Ext2} below) 
obtained taking the coinvariants by the action of the symmetric groups, twisted by the signature, in the result of Theorem \ref{Ext}.

The aim of this section is to describe the structure of the  graded PROP $\Ext$ introduced in Definition \ref{PROPExt}. 
We will prove in Theorem \ref{PROP-E-explicite} that the PROP $\Ext$ is a wheeled PROP.
The PROP $\Ext$ extends the PROP $\Ext^0$ in the sense that $\Ext^0$ is a subPROP of $\Ext$ (see Remark \ref{rem-subPROP}).

Since the exterior powers intervene in the PROP $\Ext$ we need the following result:
\begin{prop}\cite[Theorem 4.2]{Vespa} \label{Ext2}
For $\kk$ a field of characteristic $0$ and $n,m \in \mathbb{N}$, we have isomorphisms
$$Ext^{*}_{\F(\gr)}(\Lambda^j  \abe,  \abe^{\otimes q}) \simeq \left\lbrace\begin{array}{ll}
 \kk^{S(q,j)} & \text{if } *=q-j\\
 0 & \text{otherwise}
 \end{array}
 \right.$$
 where $S(q,j)$ denotes the number of ways to partition a set of $q$ elements into $j$ non-empty subsets
 
 $$Ext^{*}_{\F(\gr)}(\Lambda^n \abe, \Lambda^m  \abe) \simeq \left\lbrace\begin{array}{ll}
\kk^{\rho(m,n)} & \text{if } *=m-n\\
 0 & \text{otherwise}
 \end{array}
 \right.$$
 where $\rho(m,n)$ denotes the number of partitions of $m$ into $n$  parts.
\end{prop}

Since $Hom_{\F(\gr)}(\Lambda^j \abe, \Lambda^j \abe) \simeq \kk$, the external product gives a morphism
\begin{equation} \label{E}
Ext^*_{\F(\gr)}(\abe^{\otimes m} \otimes \Lambda^i \abe, \abe^{\otimes l})  \xrightarrow{E}Ext^*_{\F(\gr)}(\abe^{\otimes m} \otimes \Lambda^i  \abe \otimes \Lambda^j  \abe, \abe^{\otimes l} \otimes \Lambda^j  \abe).
\end{equation}

Recall that for $V^\bullet$ a cohomologically graded module, for $i \in \N$ the $i$-th desuspension of $V^\bullet$ is the graded module $s^{-i}V^\bullet$ such that $s^{-i}V^n=V^{n-i}$.
\begin{defn} \label{PROPExt}
The PROP $\Ext$ is defined by the graded $(\mathfrak{S}_q, \mathfrak{S}_l)$-bimodules 
$$\Ext(q,l)={\underset{j\in \N}{\bigoplus}}s^{-j}Ext^*_{\F(\gr; \kk)}(\abe^{\otimes l} \otimes \Lambda^j  \abe, \abe^{\otimes q})$$
where the action of the symmetric group $\mathfrak{S}_q$ (resp.  $\mathfrak{S}_l$) is given by place permutation of the copies of $\abe$ in the first variable (resp. in the second variable).

The horizontal composition ${\otimes}: Hom_\Ext(q_1,l_1) \otimes Hom_\Ext(q_2,l_2) \to Hom_\Ext(q_1+q_2,l_1+l_2)$  is given by:
$$\xymatrix{
{\underset{j \in \N}{\bigoplus}}s^{-j}Ext^*_{\F(\gr)}(\abe^{\otimes l_1} \otimes \Lambda^j \abe, \abe^{\otimes q_1}) \otimes {\underset{i\in \N}{\bigoplus}}s^{-i}Ext^*_{\F(\gr)}(\abe^{\otimes l_2} \otimes \Lambda^i  \abe, \abe^{\otimes q_2})
 \ar[d]^{\beta}\\
 {\underset{i, j \in \N}{\bigoplus}}s^{-j-i}Ext^*_{\F(\gr)}(\abe^{\otimes l_1} \otimes \Lambda^j \abe \otimes \abe^{\otimes l_2} \otimes \Lambda^i  \abe, \abe^{\otimes q_1+q_2}) \ar[d]^{T}\\
 {\underset{i, j \in \N}{\bigoplus}}s^{-j-i}Ext^*_{\F(\gr)}(\abe^{\otimes l_1+l_2} \otimes \Lambda^j  \abe \otimes \Lambda^i  \abe, \abe^{\otimes q_1+q_2}) \ar[d]^{c}\\
  {\underset{j+i \in \N}{\bigoplus}}s^{-j-i}Ext^*_{\F(\gr)}(\abe^{\otimes l_1+l_2} \otimes \Lambda^{j+i}  \abe, \abe^{\otimes q_1+q_2})\\
 }
$$
where $\beta$ is the exterior product, $T$ is induced by the permutation of $\Lambda^i \abe$ and $\abe^{\otimes l_2}$ and $c$ by the natural transformation $\Lambda^{i+j}  \abe \to \Lambda^{i}  \abe \otimes \Lambda^{j}  \abe$ (see (\ref{Exterior})).

The vertical composition $\circ : Hom_\Ext(q,l) \otimes Hom_\Ext(l,m) \to Hom_\Ext(q,m)$ is given by:

$$\xymatrix{
{\underset{j \in \N}{\bigoplus}}s^{-j}Ext^*_{\F(\gr)}(\abe^{\otimes l} \otimes \Lambda^j  \abe, \abe^{\otimes q}) \otimes {\underset{i \in \N}{\bigoplus}}s^{-i}Ext^*_{\F(\gr)}(\abe^{\otimes m} \otimes \Lambda^i  \abe, \abe^{\otimes l})
 \ar[d]^{\simeq}\\
 {\underset{i, j \in \N}{\bigoplus}}s^{-j}Ext^*_{\F(\gr)}(\abe^{\otimes l} \otimes \Lambda^j  \abe, \abe^{\otimes q}) \otimes s^{-i}Ext^*_{\F(\gr)}(\abe^{\otimes m} \otimes \Lambda^i  \abe, \abe^{\otimes l}) 
 \ar[d]\\
\underset{i,j \in \N}{\bigoplus}
 s^{-j} Ext^*(\abe^{\otimes l} \otimes \Lambda^j \abe, \abe^{\otimes q}) \otimes s^{-i}Ext^*(\abe^{\otimes m} \otimes \Lambda^i  \abe \otimes \Lambda^j \abe, \abe^{\otimes l} \otimes \Lambda^j  \abe) 
 \ar[d]^{\mathcal{Y}}\\
 {\underset{i,j \in \N}{\bigoplus}} s^{-i-j}Ext^*_{\F(\gr)}(\abe^{\otimes m} \otimes \Lambda^i  \abe \otimes \Lambda^j  \abe, \abe^{\otimes q})  \ar[d]^{c}\\
{\underset{i+j \in \N}{\bigoplus}}s^{-i-j}Ext^*_{\F(\gr)}(\abe^{\otimes m} \otimes \Lambda^{i+j}  \abe, \abe^{\otimes q})
 }
$$

where the second morphism is induced by the maps (\ref{E}), the third by the Yoneda product and the last one by the canonical natural transformation $\Lambda^{i+j} \abe \to \Lambda^{i} \abe\otimes \Lambda^{j}  \abe$ given in (\ref{Exterior}).
\end{defn}

\begin{rem} \label{rem-subPROP}
The PROP $\Ext^0$ (see Definition \ref{PROPExt0}) is the subPROP of $\Ext$ having the same objects and such that $\Ext^0(q,l)$ is the direct summand of $\Ext(q,l)$ for $j=0$.\end{rem}

\subsection{Calculation of $\Ext(q,l)$}
The aim of this section is to prove the following result:
\begin{prop} \label{E(q,l)}
For $q,l \in \N$ we have an isomorphism of graded $(\mathfrak{S}_q, \mathfrak{S}_l)$-bimodules 
$$\Ext(q,l)=  \underset{\substack{J \subset \textbf{q}\\ f: J\twoheadrightarrow \textbf{l}}}{\bigoplus} \ \underset{\substack{1 \leq j\leq | \textbf{q}\setminus J| \\ X_1 \amalg \ldots \amalg X_{j } =\textbf{q}\setminus J \\min(X_1)< \ldots <min(X_j)}}  {\bigoplus} \  \left(\overset{l}{\underset{i=1}{\bigotimes }}Ext^{*}_{\F(\gr; \kk)}(\abe, \abe^{\otimes |f^{-1}(i)|})\right) \otimes \left(\overset{j }{\underset{k=1}{\bigotimes }}s^{-1}Ext^{*}_{\F(\gr; \kk)}(\abe, \abe^{\otimes |X_k|})\right) $$
where $J \subset \textbf{q}$, $X_1 \amalg \ldots \amalg X_{k} $ is a partition of $\textbf{q}\setminus J$ into non-empty sets such that: $min(X_1)<\ldots <min(X_k)$.
\end{prop}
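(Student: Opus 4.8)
The plan is to compute $Ext^*_{\F(\gr;\kk)}(\abe^{\otimes l}\otimes\Lambda^j\abe,\abe^{\otimes q})$ by first distributing the $q$ output copies of $\abe$ between the two tensor factors of the source, and then feeding the resulting factors into the known computations of Theorem~\ref{Ext} and Proposition~\ref{Ext2}. Concretely, I would write the source as $\abe^{\otimes l}\otimes\Lambda^j\abe=\delta_2^*(\abe^{\otimes l}\boxtimes\Lambda^j\abe)$ and exploit the adjunction $\delta_2^*\dashv\pi_2^*$ together with the exponential decomposition (\ref{tensor2}) of $\pi_2^*\abe^{\otimes q}$ to produce, after a Künneth argument in the product category, a decomposition indexed by the subsets $J\subseteq\textbf{q}$.

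First I would upgrade the $Hom$-adjunction $\delta_2^*\dashv\pi_2^*$ to a statement about $Ext$-groups. The point is that $\delta_2^*$ preserves projectives: on the projective generators one has $\delta_2^*(P_n\boxtimes P_m)=P_n\otimes P_m\simeq P_{n+m}$, since $\gr(\Z^{*n},-)\times\gr(\Z^{*m},-)\simeq\gr(\Z^{*(n+m)},-)$ because the free product is the coproduct in $\gr$. As $\delta_2^*$ is exact, applying it to a projective resolution of $\abe^{\otimes l}\boxtimes\Lambda^j\abe$ in $\F(\gr^{\times2};\kk)$ yields a projective resolution of the source, so the adjunction gives
\[
Ext^*_{\F(\gr)}(\abe^{\otimes l}\otimes\Lambda^j\abe,\abe^{\otimes q})\simeq Ext^*_{\F(\gr^{\times2})}(\abe^{\otimes l}\boxtimes\Lambda^j\abe,\pi_2^*\abe^{\otimes q}).
\]
Substituting (\ref{tensor2}) for $\pi_2^*\abe^{\otimes q}$ and using the Künneth isomorphism $Ext^*_{\F(\gr^{\times2})}(A\boxtimes B,C\boxtimes D)\simeq Ext^*_{\F(\gr)}(A,C)\otimes Ext^*_{\F(\gr)}(B,D)$, valid since $\kk$ is a field and $\F(\gr^{\times2})$ is generated by the external products $P_n\boxtimes P_m$, I would arrive at
\[
Ext^*_{\F(\gr)}(\abe^{\otimes l}\otimes\Lambda^j\abe,\abe^{\otimes q})\simeq\bigoplus_{J\subseteq\textbf{q}}Ext^*_{\F(\gr)}(\abe^{\otimes l},\abe^{\otimes|J|})\otimes Ext^*_{\F(\gr)}(\Lambda^j\abe,\abe^{\otimes|\textbf{q}\setminus J|}).
\]

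It then remains to expand each factor. By Theorem~\ref{Ext} and the description of the external product in Proposition~\ref{products}, $Ext^*_{\F(\gr)}(\abe^{\otimes l},\abe^{\otimes|J|})\simeq\kk[Surj(J,\textbf{l})]$ splits as $\bigoplus_{f\colon J\twoheadrightarrow\textbf{l}}\bigotimes_{i=1}^l Ext^*_{\F(\gr)}(\abe,\abe^{\otimes|f^{-1}(i)|})$, each fibre contributing the single generator of $Ext^*_{\F(\gr)}(\abe,\abe^{\otimes|f^{-1}(i)|})\simeq\kk$. For the second factor, Proposition~\ref{Ext2} gives $\dim Ext^*_{\F(\gr)}(\Lambda^j\abe,\abe^{\otimes m})=S(m,j)$ in degree $m-j$; since $\Lambda^j\abe$ is the image of the antisymmetriser on $\abe^{\otimes j}$ and $\mathfrak{S}_j$ acts freely on $Surj(\textbf{m},\textbf{j})$, the antisymmetrised orbit sums give one generator per unordered partition of $\textbf{m}$ into $j$ non-empty blocks, which (taking the min-ordered representative $X_1\amalg\cdots\amalg X_j$) identifies this factor with $\bigoplus\bigotimes_{k=1}^j Ext^*_{\F(\gr)}(\abe,\abe^{\otimes|X_k|})$. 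Finally I would assemble the pieces: writing the desuspension as $s^{-j}=(s^{-1})^{\otimes j}$ so that each block carries one $s^{-1}$, summing the resulting decomposition over all $j\in\N$, and re-indexing the outer $j$ as the number of blocks of the partition of $\textbf{q}\setminus J$ (with $j=0$ forcing $J=\textbf{q}$) yields exactly the stated formula. All the isomorphisms used are natural in the source and target, hence $(\mathfrak{S}_q,\mathfrak{S}_l)$-equivariant.

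The main obstacle is the passage from the $Hom$-adjunction to the $Ext$-level isomorphism: this rests on $\delta_2^*$ preserving projectivity, which I would verify on the generators $P_n\boxtimes P_m$ as above, and on the Künneth theorem in $\F(\gr^{\times2};\kk)$, which is where the hypothesis that $\kk$ is a field is essential. A secondary delicate point is the identification of the $\Lambda^j\abe$-factor with the min-ordered block decomposition: the dimension count is precisely Proposition~\ref{Ext2}, but checking that the signs of Theorem~\ref{Ext-action} do not annihilate the signature-isotypic component of each free $\mathfrak{S}_j$-orbit is the combinatorial input, already implicit in the proof of Proposition~\ref{Ext2}.
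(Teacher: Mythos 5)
Your proposal is correct and follows essentially the same route as the paper: the adjunction $\delta_2^*\dashv\pi_2^*$ combined with the decomposition (\ref{tensor2}) and a K\"unneth argument to obtain the sum over $J\subseteq\textbf{q}$, followed by the expansions of $Ext^{*}_{\F(\gr;\kk)}(\abe^{\otimes l},\abe^{\otimes|J|})$ over surjections and of $Ext^{*}_{\F(\gr;\kk)}(\Lambda^j\abe,\abe^{\otimes|\textbf{q}\setminus J|})$ over min-ordered partitions, which the paper cites from \cite{Vespa}. The only difference is that you supply details the paper leaves implicit — checking that $\delta_2^*$ preserves projectives via $P_n\otimes P_m\simeq P_{n+m}$, and the signature-isotypic count behind (\ref{eq-3}) — both of which are correct.
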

The proof of this Proposition relies on the following lemma:
\begin{lm}
For $q,l,j \in \N$ we have the following isomorphisms of graded $(\mathfrak{S}_q, \mathfrak{S}_l)$-bimodules:
\begin{equation} \label{eq-1}
Ext^*_{\F(\gr; \kk)}(\abe^{\otimes l} \otimes \Lambda^j \abe, \abe^{\otimes q}) \simeq \underset{J \subset \textbf{q}}{\bigoplus} \left(  Ext^*_{\F(\gr; \kk)}(\abe^{\otimes l} , \abe^{\otimes |J|}) \otimes Ext^*_{\F(\gr; \kk)}(\Lambda^j \abe, \abe^{\otimes|q \setminus J|})\right);
\end{equation}
\begin{equation} \label{eq-2}
Ext^*_{\F(\gr; \kk)}(\abe^{\otimes l}, \abe^{\otimes q}) \simeq \underset{  f:  \textbf{q} \twoheadrightarrow \textbf{l}}{\bigoplus} \quad \left(   \overset{l}{\underset{k=1}{\bigotimes}} Ext^*_{\F(\gr; \kk)}(\abe, \abe^{\otimes |f^{-1}(k)|}) \right)
\end{equation}
\begin{equation} \label{eq-3}
Ext^*_{\F(\gr; \kk)}(\Lambda^j\abe, \abe^{\otimes q}) \simeq  \underset{\substack{X_1 \amalg \ldots \amalg X_{j } =\textbf{q}\\min(X_1)< \ldots <min(X_j)}}  {\bigoplus} \left(  \overset{j}{\underset{k=1}{\bigotimes}} Ext^{*}_{\F(\gr)}( \abe, \abe^{\otimes |X_k|})\right).
\end{equation}
\end{lm}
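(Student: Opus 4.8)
The plan is to deduce all three isomorphisms from two general principles, which I would establish first: an adjunction isomorphism comparing $Ext$ over $\gr$ with $Ext$ over $\gr^{\times 2}$, and a Künneth formula for the exterior tensor product $\boxtimes$. Since $\pi_2^*=\amalg_2^*$ is precomposition it is exact, so its left adjoint $\delta_2^*$ preserves projectives; being itself exact, $\delta_2^*$ sends a projective resolution of $M\in\F(\gr^{\times 2};\kk)$ to one of $\delta_2^*M$, and the adjunction upgrades to a natural isomorphism
$$Ext^*_{\F(\gr;\kk)}(\delta_2^*M,N)\simeq Ext^*_{\F(\gr^{\times 2};\kk)}(M,\pi_2^*N).$$
For the Künneth formula, if $P_\bullet\to F_1$ and $Q_\bullet\to F_2$ are projective resolutions in $\F(\gr;\kk)$, then, $\kk$ being a field, $P_\bullet\boxtimes Q_\bullet\to F_1\boxtimes F_2$ is a projective resolution in $\F(\gr^{\times 2};\kk)$: the exterior products $P_m\boxtimes P_n$ of the standard projectives are the representables of $\gr^{\times 2}$, hence projective by Yoneda, and the total complex is acyclic because the tensor product of resolutions of $\kk$-vector spaces is. Evaluating $Hom$ on these generators via Yoneda identifies the complex $Hom_{\F(\gr^{\times 2})}(P_\bullet\boxtimes Q_\bullet,G_1\boxtimes G_2)$ with $Hom(P_\bullet,G_1)\otimes Hom(Q_\bullet,G_2)$, giving
$$Ext^*_{\F(\gr^{\times 2};\kk)}(F_1\boxtimes F_2,G_1\boxtimes G_2)\simeq Ext^*_{\F(\gr;\kk)}(F_1,G_1)\otimes Ext^*_{\F(\gr;\kk)}(F_2,G_2).$$

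To obtain (\ref{eq-1}) I would write $\abe^{\otimes l}\otimes\Lambda^j\abe=\delta_2^*(\abe^{\otimes l}\boxtimes\Lambda^j\abe)$, apply the adjunction, insert the splitting $\pi_2^*\abe^{\otimes q}\simeq\bigoplus_{J\subset\textbf{q}}\abe^{\otimes|J|}\boxtimes\abe^{\otimes|\textbf{q}\setminus J|}$ of (\ref{tensor2}), and finish with the Künneth formula; naturality of all three isomorphisms supplies the $(\mathfrak{S}_q,\mathfrak{S}_l)$-equivariance, with $\mathfrak{S}_q$ permuting the summands indexed by $J$. For (\ref{eq-2}) I would argue by induction on $l$: writing $\abe^{\otimes l}=\delta_2^*(\abe^{\otimes l-1}\boxtimes\abe)$ and applying the same three steps yields $\bigoplus_{J\subset\textbf{q}}Ext^*(\abe^{\otimes l-1},\abe^{\otimes|J|})\otimes Ext^*(\abe,\abe^{\otimes|\textbf{q}\setminus J|})$. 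By Theorem \ref{Ext}, $Ext^*(\abe,\abe^{\otimes 0})\simeq\kk[Surj(\textbf{0},\textbf{1})]=0$, so only $J$ with $\textbf{q}\setminus J\neq\emptyset$ contribute; the inductive hypothesis then identifies a surjection $g\colon J\twoheadrightarrow\textbf{l-1}$ together with the nonempty block $\textbf{q}\setminus J$ with a surjection $f\colon\textbf{q}\twoheadrightarrow\textbf{l}$ satisfying $f^{-1}(l)=\textbf{q}\setminus J$, which is exactly (\ref{eq-2}).

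For (\ref{eq-3}) the approach is to exploit that, in characteristic zero, $\Lambda^j\abe$ is the image of the antisymmetrizer $a_j=\tfrac1{j!}\sum_{\sigma\in\mathfrak{S}_j}\epsilon(\sigma)\sigma$ acting on $\abe^{\otimes j}$ by permutation of tensor factors. Hence $Ext^*(\Lambda^j\abe,\abe^{\otimes q})$ is the signature-isotypic summand of $Ext^*(\abe^{\otimes j},\abe^{\otimes q})$ for the induced $\mathfrak{S}_j$-action on the source. Feeding in (\ref{eq-2}) with $l=j$, this action permutes the blocks of a surjection $f\colon\textbf{q}\twoheadrightarrow\textbf{j}$; the orbits are free, since distinct blocks force a trivial stabiliser, and they are indexed by the unordered partitions of $\textbf{q}$ into $j$ nonempty blocks. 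On each free orbit the signature-isotypic part is one-dimensional, and choosing the representative with $\min X_1<\cdots<\min X_j$ produces precisely the summand $\bigotimes_{k=1}^{j}Ext^*(\abe,\abe^{\otimes|X_k|})$; the resulting total dimension is $S(q,j)$, in agreement with Proposition \ref{Ext2}.

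The genuinely delicate point is the last step: the $\mathfrak{S}_j$-action on $Ext^*(\abe^{\otimes j},\abe^{\otimes q})\simeq\kk[Surj(\textbf{q},\textbf{j})]$ carries the signs of Theorem \ref{Ext-action}, so I must verify that on each free orbit these merely twist the permutation module by a linear character, leaving the signature-isotypic line one-dimensional and pinning down the asserted tensor factor. This is exactly where the sign conventions corrected in the corrigendum to \cite{Vespa} must be tracked consistently; the adjunction and Künneth steps, by contrast, are formal once exactness of $\pi_2^*$ and the field hypothesis on $\kk$ are in hand.
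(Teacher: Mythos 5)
Your proposal is correct, and for (\ref{eq-1}) it follows exactly the paper's route: the $(\delta_2^*,\pi_2^*)$-adjunction, the splitting (\ref{tensor2}), and a K\"unneth isomorphism. You actually justify more than the paper does, since the paper invokes the adjunction and the K\"unneth formula with only a brief remark; the one point worth making explicit is that your identification $Hom_{\F(\gr^{\times 2};\kk)}(P_\bullet\boxtimes Q_\bullet, G_1\boxtimes G_2)\simeq Hom(P_\bullet,G_1)\otimes Hom(Q_\bullet,G_2)$ needs the resolutions to be degreewise \emph{finitely generated} projectives (for infinite direct sums of representables, $Hom$ produces products and the tensor decomposition can fail); the paper says this explicitly (``resolutions by finitely generated projective functors'', with torsion-free values), and it holds here because $\bar{P}^{\otimes n}$ is a direct summand of the representable $P_n$. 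For (\ref{eq-2}) and (\ref{eq-3}) the paper gives no argument at all, deferring to the proofs of \cite[Theorem 2.3]{Vespa} and \cite[Theorem 4.2]{Vespa}, so your induction on $l$ and your antisymmetrizer argument are self-contained substitutes rather than departures; the latter matches the paper's own description of how the input result is obtained (signature-twisted coinvariants of the computation in Theorem \ref{Ext}). Your induction is clean: the vanishing $Ext^*_{\F(\gr;\kk)}(\abe,\kk)=0$ that kills the summands with $\textbf{q}\setminus J=\emptyset$ also follows directly from Corollary \ref{Cor-barP}, without appeal to Theorem \ref{Ext}. Finally, your flagged ``delicate point'' in (\ref{eq-3}) is genuinely there but dissolves more easily than you suggest: the signs of Theorem \ref{Ext-action} need not assemble into a character twist, yet on a free orbit any sign-twisted permutation module is isomorphic to the regular representation (the map $\sigma\mapsto \sigma\cdot e_{f_0}$ sends a basis to a basis up to signs), so the image of the antisymmetrizer is automatically one line per orbit, and the orbit count gives $S(q,j)$ in agreement with Proposition \ref{Ext2}.
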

\begin{proof}
For (\ref{eq-1}) we have:
\begin{eqnarray*}
Ext^*_{\F(\gr; \kk)}(\abe^{\otimes l} \otimes \Lambda^j \abe, \abe^{\otimes q}) &\simeq &Ext^*_{\F(\gr \times \gr; \kk)}(\abe^{\otimes l} \boxtimes \Lambda^j \abe, \pi_2^* (\abe^{\otimes q}))\\
& \simeq &Ext^*_{\F(\gr \times \gr; \kk)}(\abe^{\otimes l} \boxtimes \Lambda^j \abe, \underset{J \subset \textbf{q}}{\bigoplus}\abe^{\otimes |J|}  \boxtimes \abe^{\otimes |\textbf{q} \setminus J|} ) \\
&\simeq & \underset{J \subset \textbf{q}}{\bigoplus}\ Ext^*_{\F(\gr \times \gr; \kk)}(\abe^{\otimes l} \boxtimes \Lambda^j \abe,\abe^{\otimes |J|}  \boxtimes \abe^{\otimes |\textbf{q} \setminus J|} )
\end{eqnarray*}

where the first isomorphism is given by the adjunction between $\delta_2^*$ and $\pi_2^*$ (see Section \ref{Rappel-foncteurs}) and the second by (\ref{tensor2}).

Using the resolution given in Section \ref{reso-abe}, we obtain that $\abe^{\otimes l} $ and $\Lambda^j \abe$ have resolutions by finitely generated projective functors. Moreover, the values of $\abe^{\otimes n}$ and $Ext^*_{\F(\gr; \kk)}(\abe^{\otimes l} ,\abe^{\otimes |J|})$ (by Theorem \ref{Ext}) are torsion free. It follows, by the K\"unneth formula, that the graded morphism: 
 $$ Ext^*_{\F(\gr; \kk)}(\abe^{\otimes l} ,\abe^{\otimes |J|}) \otimes  Ext^*_{\F(\gr; \kk)}( \Lambda^j \abe, \abe^{\otimes |\textbf{q} \setminus J|} ) \xrightarrow{\simeq}  Ext^*_{\F(\gr \times \gr; \kk)}(\abe^{\otimes l} \boxtimes \Lambda^j \abe, \abe^{\otimes |J|}  \boxtimes \abe^{\otimes |\textbf{q} \setminus J|} ) $$
is an isomorphism.

For (\ref{eq-2}) and (\ref{eq-3}) we refer the reader to the proof of \cite[Theorem 2.3]{Vespa} and \cite[Theorem 4.2]{Vespa}.

\end{proof}

\begin{rem}
By Proposition \ref{E(q,l)} we have:
$$\Ext(n,n)\simeq  \underset{\sigma \in \mathfrak{S}_n}{\bigoplus} \ \left(\overset{n}{\underset{i=1}{\bigotimes }}Ext^*_{\F(\gr; \kk)}(\abe, \abe)\right) \simeq \kk[\mathfrak{S}_n]$$
where the last isomorphism is given by Theorem \ref{Ext}.
Hence the graded bimodule $\Ext(n,n)$ is $\kk[\mathfrak{S}_n]$ concentrated in degree $0$.
\end{rem}

\subsection{The wheeled PROP $\C_{\mathcal{P}} $}
The aim of this section is to give an isomorphism  between the PROP $\Ext$ and the wheeled PROP associated to the following wheeled operad $\mathcal{P}$.

\begin{prop} \label{wheeled-operad}
The following data define a wheeled operad denoted by $\mathcal{P}$: 
\begin{enumerate}
\item The operadic part of $\mathcal{P}$ is the operad $\mathcal{P}_0$ considered in Section \ref{operad-P0},
\item the wheeled part $\mathcal{P}_w$ is given by 
$$\mathcal{P}_w(n)=s^{-1}Ext^{*}_{\F(\gr)}(  \abe, \abe^{\otimes n });$$

\item  for $1 \leq i \leq n$, the contractions $\xi^i: \mathcal{P}_0(n) \to \mathcal{P}_w(n-1)$ are the degree $0$ maps induced by Definition \ref{contr-def}.
\end{enumerate}
 \end{prop}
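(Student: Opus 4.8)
The plan is to verify, one at a time, the three pieces of structure required by \cite[Definition 5.1.1]{MMS} for a wheeled operad, and then their mutual compatibility. The operadic part $\mathcal{P}_0$ is already an operad by Proposition \ref{operade-gr}, so nothing is needed there. The organising simplification throughout is that, by Proposition \ref{cocycle} and Theorem \ref{Ext}, each $\mathcal{P}_0(n)$ is a single copy of $\kk$ in cohomological degree $n-1$, generated by $[\pi^{\otimes n}]$, on which $\mathfrak{S}_n$ acts by the sign, and likewise each $\mathcal{P}_w(n)=s^{-1}Ext^*_{\F(\gr)}(\abe,\abe^{\otimes n})$ is one copy of $\kk$, now sitting one cohomological degree higher because of the single desuspension. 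Consequently every axiom to be checked is an equality between two scalar multiples of a generator, and the whole argument reduces to a careful bookkeeping of signs.

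First I would supply the right $\mathcal{P}_0$-module structure on $\mathcal{P}_w$, which is not part of the stated data. Since $\mathcal{P}_w(n)=s^{-1}\mathcal{P}_0(n)$ as graded $\mathfrak{S}_n$-modules, I define the action by the same exterior-product-then-Yoneda-product recipe that defines the operadic composition in Proposition \ref{operade-gr}, carrying the single desuspension along on the left-hand factor. With this definition $\mathcal{P}_w$ is exactly the image of the regular right $\mathcal{P}_0$-module $\mathcal{P}_0$ under the desuspension functor $s^{-1}$; hence it is a right $\mathcal{P}_0$-module, and its associativity, unitality and $\mathfrak{S}_n$-equivariance are inherited directly from the operad axioms for $\mathcal{P}_0$. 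The only point to track is the Koszul sign produced when the degree-one generator $s$ is moved past the $\mathcal{P}_0$-factors; the value of the action on generators is pinned down by Proposition \ref{Yoneda}, which gives $\mathcal{Y}([\pi^{\otimes n}],[\pi^{\otimes 2}]\otimes[Id_{\abe^{\otimes n-1}}])=[\pi^{\otimes n+1}]$.

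Next I would check that the contractions are well defined and admissible. Definition \ref{contr-def} gives the degree $-1$ map $\mathcal{P}_0(n)\to\mathcal{P}_0(n-1)$, $[\pi^{\otimes n}]\mapsto(-1)^i[\pi^{\otimes n-1}]$; since $\mathcal{P}_0(n)$ lies in degree $n-1$ and the desuspension raises the degree of $\mathcal{P}_0(n-1)$ by one, the resulting map $\xi^i\colon\mathcal{P}_0(n)\to\mathcal{P}_w(n-1)$ has cohomological degree $0$, as required. Its $\mathfrak{S}_n$-equivariance is precisely Proposition \ref{contr-equ}. It then remains to verify the compatibility conditions of \cite[Definition 5.1.1]{MMS} relating the contractions to the operadic composition in $\mathcal{P}_0$ and to the right action on $\mathcal{P}_w$: concretely, for a composite $\phi\underset{a}{\circ}\psi$ the contraction of an input must be expressible, according to whether that input belongs to $\phi$ or to $\psi$, either through a contraction of $\phi$ followed by the module action of $\psi$, or through the operadic composition followed by a contraction. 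Because all the spaces involved are one-dimensional, each such identity is again an equality of signs, which I would settle using $\xi^i([\pi^{\otimes n}])=(-1)^i[\pi^{\otimes n-1}]$, Proposition \ref{Yoneda}, and the reindexing sign identities of the kind already established in Proposition \ref{contr-equ}.

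The main obstacle I expect is exactly this last sign bookkeeping in the compatibility axioms. A contraction reindexes the inputs, so closing a loop at input $i$ past the remaining inputs produces a permutation sign that must be reconciled simultaneously with the factor $(-1)^i$ in the definition of $\xi^i$ and with the Koszul sign coming from the desuspension $s^{-1}$ defining $\mathcal{P}_w$. Arranging these three sources of signs so that they cancel in every case — rather than merely in one representative configuration — is where the care lies, and it reuses the same reindexing computation as in the proof of Proposition \ref{contr-equ}.
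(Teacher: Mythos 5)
Your proposal is correct and takes essentially the same route as the paper, whose entire proof consists of observing that $\mathcal{P}_w \simeq s^{-1}\mathcal{P}_0$ (hence automatically a right $\mathcal{P}_0$-module) and that the contractions are equivariant by Proposition \ref{contr-equ}. If anything, you are more explicit than the paper itself, which leaves the remaining compatibility axioms of \cite[Definition 5.1.1]{MMS} implicit in exactly the kind of one-dimensional sign bookkeeping you describe.
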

 
 \begin{proof}
 We have $\mathcal{P}_w \simeq s^{-1} \mathcal{P}_0$ so $\mathcal{P}_w$ is a right $\mathcal{P}_0$-module. The contraction maps $\xi_i: \mathcal{P}_0(n) \to \mathcal{P}_w(n-1)$ are equivariant by Proposition \ref{contr-equ}.
 \end{proof}

Let $\mathcal{Q}^{\circlearrowright}$ be the wheeled completion of the operad $\mathcal{Q}$ given in Definition \ref{operade-Q}. In the following, we give an explicit description of $\mathcal{Q}^{\circlearrowright}$.

\begin{prop} \label{wheeled-completion-Q}
The wheeled operad $\mathcal{Q}^{\circlearrowright}$ is given by the following data:
\begin{enumerate}
\item \textbf{the operadic part $( \mathcal{Q}^{\circlearrowright})_0$}: \\
$ (\mathcal{Q}^{\circlearrowright})_0(n) $ is the graded vector space concentrated in degree $n-1$ and generated by $\mu_n$ defined inductively by $\mu_1=\mathrm{Id}$, $\mu_2=\mu$ and for $n\geq 2$: 
$$\mu_{n+1}=\mu \underset{1}{\circ}\mu_n$$
\item \textbf{the wheeled part  $ (\mathcal{Q}^{\circlearrowright})_w$}:\\
 $ (\mathcal{Q}^{\circlearrowright})_w(n) $ is the graded vector space concentrated in degree $n$ generated by 
 $$\xi^1(\mu) \underset{1}{\circ} \mu_n.$$
\end{enumerate}
\end{prop}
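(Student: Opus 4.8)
The goal is to describe explicitly the wheeled completion $\mathcal{Q}^{\circlearrowright}$ of the binary quadratic operad $\mathcal{Q}$. The strategy is to use the left-adjoint construction of the wheeled completion together with the known presentation of $\mathcal{Q}$ by one antisymmetric generator $\mu$ in degree $1$ subject to the single relation $\mu \underset{1}{\circ} \mu = -\mu \underset{2}{\circ} \mu$. Since a wheeled operad consists of an operadic part, a wheeled part (a right module over the operadic part), and contraction maps, and since the wheeled completion is free on $\mathcal{Q}$ subject only to the relations already present in $\mathcal{Q}$, I would compute each of the two parts separately.

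\textbf{The operadic part.} First I would identify $(\mathcal{Q}^{\circlearrowright})_0$ with the operad $\mathcal{Q}$ itself. The wheeled completion does not alter the operadic part (it only freely adjoins contractions and the resulting wheeled-part elements), so $(\mathcal{Q}^{\circlearrowright})_0 = \mathcal{Q} \simeq \mathcal{P}_0$. The explicit generators $\mu_n$ defined by $\mu_1 = \mathrm{Id}$, $\mu_2 = \mu$ and $\mu_{n+1} = \mu \underset{1}{\circ} \mu_n$ span a one-dimensional space in degree $n-1$: the quadratic relation forces every iterated composite of $\mu$'s of arity $n$ to be proportional to $\mu_n$ (up to sign determined by the $\mathfrak{S}_n$-action, which is by the signature). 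This matches Proposition \ref{operade-gr}, where $\mathcal{P}_0(n)$ is the sign representation of $\mathfrak{S}_n$ in degree $n-1$. I would verify by a short induction that the relation indeed collapses all arity-$n$ trees to a single line $\kk\mu_n$, so that $(\mathcal{Q}^{\circlearrowright})_0(n)$ is concentrated in degree $n-1$ and generated by $\mu_n$.

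\textbf{The wheeled part.} Next I would compute $(\mathcal{Q}^{\circlearrowright})_w(n)$. By the universal property, the wheeled part is generated under the right $(\mathcal{Q}^{\circlearrowright})_0$-module structure by the images of the contraction $\xi^1$ applied to operadic elements. Applying $\xi^1$ to $\mu$ produces the fundamental wheeled element $\xi^1(\mu)$; composing on the right with the operadic generators yields $\xi^1(\mu) \underset{1}{\circ} \mu_n$, which raises the degree by one relative to the operadic part (because $\xi^1$ has degree $-1$ while creating a closed wheel, and the bookkeeping puts $(\mathcal{Q}^{\circlearrowright})_w(n)$ in degree $n$). The key claim is that this single family of elements spans $(\mathcal{Q}^{\circlearrowright})_w(n)$, i.e. every element of the wheeled part reduces, via the compatibility axioms between contractions and operadic composition, to a right-multiple of $\xi^1(\mu)$. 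I would establish this by showing that any contraction $\xi^i$ applied to any operadic element can be rewritten, using the equivariance from Proposition \ref{contr-equ} and the relation $\xi^j \circ (\text{composite})$, in terms of the normal form $\xi^1(\mu) \underset{1}{\circ} \mu_{n}$.

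\textbf{Main obstacle.} The hard part will be the normal-form argument for the wheeled part: one must show there are no further relations among the wheeled elements beyond those forced by the quadratic relation in $\mathcal{Q}$ and the contraction axioms, and simultaneously that the family $\xi^1(\mu) \underset{1}{\circ} \mu_n$ does not collapse to zero or to proportional duplicates. This amounts to a careful dimension count — confirming $(\mathcal{Q}^{\circlearrowright})_w(n)$ is one-dimensional in degree $n$ — which I would cross-check against Proposition \ref{wheeled-operad}, where $\mathcal{P}_w(n) = s^{-1}Ext^{*}_{\F(\gr)}(\abe, \abe^{\otimes n})$ is one-dimensional in the appropriate degree by Proposition \ref{cocycle}. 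The agreement of both parts with the already-computed wheeled operad $\mathcal{P}$ gives the cleanest route: since $\mathcal{P}$ has operadic part $\mathcal{P}_0 \simeq \mathcal{Q}$ and contractions induced by Definition \ref{contr-def}, the universal property of the wheeled completion furnishes a canonical map $\mathcal{Q}^{\circlearrowright} \to \mathcal{P}$, and I would check it is an isomorphism by comparing the explicit bases degree by degree.
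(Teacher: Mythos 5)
Your plan is correct in outline, but it takes a genuinely different route from the paper. The paper handles the operadic part exactly as you do (the wheeled completion leaves the operadic part unchanged, so $(\mathcal{Q}^{\circlearrowright})_0=\mathcal{Q}$ and Definition \ref{operade-Q} applies), but for the wheeled part it simply adapts a known computation: it invokes the description of $(\mathcal{C}om^{\circlearrowright})_w(n)$ in \cite[Example 5.2.5]{MMS} and transports it to $\mathcal{Q}$ by replacing commutativity with commutativity up to sign and keeping track of the grading. You instead propose a self-contained two-sided argument: a normal-form/rewriting computation showing that every wheeled element reduces to a multiple of $\xi^1(\mu)\underset{1}{\circ}\mu_n$ (the spanning upper bound), combined with the canonical morphism $\mathcal{Q}^{\circlearrowright}\to\mathcal{P}$ furnished by adjunction from Propositions \ref{operade-gr} and \ref{wheeled-operad} to see that these elements do not collapse to zero (the lower bound). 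This is legitimate and not circular, since Proposition \ref{wheeled-operad} is established independently of the statement at hand; note that your route effectively proves Proposition \ref{wheeled-completion} at the same time, whereas the paper derives that isomorphism afterwards from the present proposition. What the paper's route buys is brevity, by outsourcing the normal-form combinatorics to \cite{MMS}; what yours buys is independence from that example and an explicit check of where the signs enter. Two cautions: your closing suggestion to verify the isomorphism ``by comparing the explicit bases degree by degree'' cannot substitute for the spanning argument---a basis of $\mathcal{Q}^{\circlearrowright}$ is precisely what is being determined, so the comparison map alone yields only surjectivity and non-vanishing, and the normal form remains the essential step (you do state it as the key claim, so the plan stands); and the degree bookkeeping is slightly misphrased---in the wheeled completion the contractions are degree-zero maps (as in Proposition \ref{wheeled-operad}), and $(\mathcal{Q}^{\circlearrowright})_w(n)$ sits in degree $n$ because it is spanned by the contraction of the arity-$(n+1)$ element $\mu_{n+1}$, which has degree $n$, not because $\xi^1$ itself carries degree $-1$ there.
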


\begin{proof}

We have $( \mathcal{Q}^{\circlearrowright})_0=\mathcal{Q}$ so the description of the operadic part follows from Definition \ref{operade-Q}.

The proof of the description of $ (\mathcal{Q}^{\circlearrowright})_w(n)$ is similar to that of $(\mathcal{C}om^{\circlearrowright})_w(n)$ given in \cite[Example 5.2.5]{MMS} replacing the commutativity property by the commutativity up to signs and taking into account the fact that we have graded modules.
\end{proof}

\begin{prop} \label{wheeled-completion}
The wheeled operad $\mathcal{P}$ is isomorphic to $\mathcal{Q}^{\circlearrowright}$.
In particular, $\mathcal{P}$ is isomorphic to the wheeled completion of the quadratic operad $\mathcal{P}_0$ i.e. $\mathcal{P} \simeq \mathcal{P}_0^{\circlearrowright}$.
\end{prop}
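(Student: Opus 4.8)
The plan is to prove the main claim $\mathcal{P}\simeq\mathcal{Q}^{\circlearrowright}$ and then deduce the ``in particular'' clause formally. Since Proposition~\ref{operade-gr} gives an isomorphism of operads $f\colon\mathcal{Q}\xrightarrow{\simeq}\mathcal{P}_0$ and the wheeled completion $(-)^{\circlearrowright}$ is a functor, the isomorphism $\mathcal{Q}^{\circlearrowright}\simeq\mathcal{P}_0^{\circlearrowright}$ is automatic, so it suffices to identify $\mathcal{P}$ with $\mathcal{Q}^{\circlearrowright}$. To construct the comparison morphism I would invoke the universal property of the wheeled completion: as $(-)^{\circlearrowright}$ is left adjoint to the forgetful functor, and $\mathcal{P}$ is a genuine wheeled operad by Proposition~\ref{wheeled-operad}, the operad morphism $f\colon\mathcal{Q}\to\mathcal{P}_0=(\mathcal{P})_0$ extends uniquely to a morphism of wheeled operads $\Psi\colon\mathcal{Q}^{\circlearrowright}\to\mathcal{P}$. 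Because the unit $\mathcal{Q}\to(\mathcal{Q}^{\circlearrowright})_0$ is the identity (Proposition~\ref{wheeled-completion-Q}), the morphism $\Psi$ restricts to $f$ on operadic parts.

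It then remains to check that $\Psi$ is an isomorphism, arity by arity on each part. On the operadic part $\Psi_0=f$ is an isomorphism by Proposition~\ref{operade-gr}. On the wheeled part I would compare dimensions and degrees: by Proposition~\ref{wheeled-completion-Q}, $(\mathcal{Q}^{\circlearrowright})_w(n)$ is one-dimensional, concentrated in degree $n$, and generated by $\xi^1(\mu)\underset{1}{\circ}\mu_n$; on the other side, Theorem~\ref{Ext} (with $l=1$, $q=n$) shows $\mathcal{P}_w(n)=s^{-1}Ext^{*}_{\F(\gr)}(\abe,\abe^{\otimes n})$ is also one-dimensional, concentrated in degree $n$, generated by $s^{-1}[\pi^{\otimes n}]$. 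Hence $\Psi_w$ is a degree-$0$ map between one-dimensional graded vector spaces sitting in the same degree, and it suffices to show it is nonzero on the generator.

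The key computation, where all the sign and degree bookkeeping lives, is the evaluation of $\Psi_w$ on that generator. Since $\Psi$ commutes with the contractions and with the right $\mathcal{P}_0$-module structure,
\[
\Psi_w\!\left(\xi^1(\mu)\underset{1}{\circ}\mu_n\right)=\xi^1\!\big(f_2(\mu)\big)\underset{1}{\circ}f_n(\mu_n).
\]
I would first record that $f_n(\mu_n)$ is a nonzero multiple of $[\pi^{\otimes n}]$, proved by induction from $\mu_{n+1}=\mu\underset{1}{\circ}\mu_n$ together with the Yoneda computation of Proposition~\ref{Yoneda} and the identification of operadic partial composition with the Yoneda product from the proof of Proposition~\ref{operade-gr}. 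Next, $f_2(\mu)=[\pi^{\otimes 2}]$ gives $\xi^1(f_2(\mu))=-\,s^{-1}[\pi]$ by Definition~\ref{contr-def}, where the degree-$(-1)$ contraction becomes degree $0$ precisely after the desuspension built into $\mathcal{P}_w=s^{-1}\mathcal{P}_0$; and since $[\pi]$ is the operadic unit and the module action is $s^{-1}$ of operadic composition, the right-hand side equals a nonzero multiple of $s^{-1}[\pi^{\otimes n}]$, a generator of $\mathcal{P}_w(n)$. Thus $\Psi_w$ is an isomorphism in every arity, so $\Psi$ is an isomorphism of wheeled operads. I expect the only real obstacle to be this last bookkeeping—keeping the $s^{-1}$ shift consistent with the degree-$(-1)$ contraction of Definition~\ref{contr-def} and tracking the sign $(-1)^i$—rather than any structural difficulty, since the one-dimensionality of every piece reduces the verification to a single nonvanishing statement.
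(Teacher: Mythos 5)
Your proposal is correct and follows essentially the same route as the paper: both construct the comparison morphism $f^{\circlearrowright}\colon\mathcal{Q}^{\circlearrowright}\to\mathcal{P}$ from $f\colon\mathcal{Q}\xrightarrow{\simeq}\mathcal{P}_0$ via the adjunction, note the operadic parts match by Proposition~\ref{operade-gr}, and conclude on the wheeled part by comparing the one-dimensional graded pieces and checking nonvanishing on the generator $\xi^1(\mu)\underset{1}{\circ}\mu_n$ using $\xi^1([\pi^{\otimes 2}])=-[\pi]$ from Definition~\ref{contr-def} together with compatibility with contractions and composition. Your explicit tracking of the $s^{-1}$ shift and the ``nonzero multiple'' bookkeeping for $f(\mu_n)$ is a slightly more careful rendering of exactly the paper's argument.
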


\begin{proof}

Recall that the wheelification $(-)^{\circlearrowright}$ is the left adjoint of the forgetful functor $F$ from wheeled operads to operads. Since $\mathcal{P}$ is a wheeled operad whose the operadic part is $\mathcal{P}_0$, we have a morphism of operads $\mathcal{P}_0 \to F(\mathcal{P})$. The composition of the isomorphism of operads $f: \mathcal{Q} \to  \mathcal{P}_0$ constructed in the proof of Proposition  \ref{operade-gr}, with the morphism of operads defined above, gives a morphism of operad $ \mathcal{Q} \to  F(\mathcal{P}_0)$.
By adjunction, this morphism induces a morphism of wheeled operads
$$f^{\circlearrowright}: \mathcal{Q}^{\circlearrowright} \to \mathcal{P}.$$

By Proposition \ref{operade-gr}, the restriction of $f^{\circlearrowright}$ to the operadic parts is an isomorphism of operads given explicitly on the generators of $ (\mathcal{Q}^{\circlearrowright})_0(n) $ by 
$$f^{\circlearrowright}(\mu_n)=[\pi^{\otimes n}].$$

For the wheeled part, by Proposition  \ref{Ext}, $\mathcal{P}_w(n)=s^{-1}Ext^{*}_{\F(\gr)}(  \abe, \abe^{\otimes{n} })$ is the graded vector space concentrated in degree $n$ generated by $[\pi^{\otimes n}]$. By Proposition \ref{wheeled-completion-Q} it follows that $\mathcal{P}_w(n)$ and $(\mathcal{Q}^{\circlearrowright})_w(n)$ are isomorphic as graded vector spaces.

Since $f^{\circlearrowright}$ is a morphism of wheeled operads, the compatibility with the contractions gives:
$$f^{\circlearrowright}(\xi^1(\mu))=\xi^1(f^{\circlearrowright}(\mu))=\xi^1([\pi^{\otimes 2}])=-[\pi]$$
where the last equality is given by Definition \ref{contr-def}.
 We deduce that 
$$f^{\circlearrowright}: (\mathcal{Q}^{\circlearrowright})_w(1) \to \mathcal{P}_w(1)$$
 is an isomorphism. By Proposition \ref{wheeled-completion-Q}, the generator of $ (\mathcal{Q}^{\circlearrowright})_w(n) $  is obtained by composition of $\xi^1(\mu)$ with $\mu_n$. It follows from the compatibility of $f^{\circlearrowright}$ with the composition that, for all $n \geq 1$
$$f^{\circlearrowright}: (\mathcal{Q}^{\circlearrowright})_w(n) \to \mathcal{P}_w(n)$$
is an isomorphism.
\end{proof}

\begin{cor} \label{gen-rel}
The PROP $\C_{\mathcal{P}}$ is isomorphic to the wheeled PROP  generated by one antisymmetric operation $\mu$ of biarity $(2,1)$ in degree $1$ subject to the quadratic relation:
$$\mu(\mu \otimes 1)=-\mu(1 \otimes \mu).$$
\end{cor}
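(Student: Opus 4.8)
The plan is to recognise the wheeled PROP in the statement as the image of the operad $\mathcal{Q}$ under the composite of two left adjoints, and then to transport the isomorphism $\mathcal{Q}^{\circlearrowright}\simeq\mathcal{P}$ of Proposition \ref{wheeled-completion} through $\C_{(-)}$. Write $\mathcal{W}$ for the wheeled PROP presented by a single antisymmetric generator $\mu$ of biarity $(2,1)$ in degree $1$ subject to $\mu(\mu\otimes 1)=-\mu(1\otimes\mu)$. By the very meaning of a generators-and-relations presentation, $\mathcal{W}$ represents the functor sending a wheeled PROP $\mathcal{D}$ to the set of elements $\hat\mu\in\mathcal{D}(2,1)$ of degree $1$ on which $\mathfrak{S}_2$ acts by the signature and which satisfy $\hat\mu\circ(\hat\mu\otimes 1)=-\hat\mu\circ(1\otimes\hat\mu)$ in $\mathcal{D}(3,1)$.

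First I would observe that the left adjoint $(-)^{\circlearrowright}\colon\mathrm{Op}\to\mathrm{wOp}$ and the generation functor $\C_{(-)}\colon\mathrm{wOp}\to\mathrm{wPROP}$, both recalled in the previous section, are left adjoints of the respective forgetful functors; hence their composite $\C_{(-)^{\circlearrowright}}$ is left adjoint to the functor sending a wheeled PROP $\mathcal{D}$ to its underlying operad $\{\mathcal{D}(n,1)\}_{n}$. This yields, for every wheeled PROP $\mathcal{D}$, a natural bijection
\[
\mathrm{Hom}_{\mathrm{wPROP}}\bigl(\C_{\mathcal{Q}^{\circlearrowright}},\mathcal{D}\bigr)\;\simeq\;\mathrm{Hom}_{\mathrm{Op}}\bigl(\mathcal{Q},\{\mathcal{D}(n,1)\}_{n}\bigr).
\]

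Next I would unwind the right-hand side using the presentation of $\mathcal{Q}$ from Definition \ref{operade-Q}: a morphism of operads $\mathcal{Q}\to\{\mathcal{D}(n,1)\}_{n}$ amounts to the choice of an antisymmetric degree-$1$ element $\hat\mu\in\mathcal{D}(2,1)$ satisfying $\hat\mu \underset{1}{\circ}\hat\mu=-\hat\mu\underset{2}{\circ}\hat\mu$. The one point deserving care---and the step I expect to be the only genuine obstacle---is to match this operadic relation with the PROP relation defining $\mathcal{W}$: in the operad underlying any PROP the partial compositions are computed by $\hat\mu\underset{1}{\circ}\hat\mu=\hat\mu\circ(\hat\mu\otimes 1)$ and $\hat\mu\underset{2}{\circ}\hat\mu=\hat\mu\circ(1\otimes\hat\mu)$, so the two conditions coincide. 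Granting this, $\mathcal{W}$ and $\C_{\mathcal{Q}^{\circlearrowright}}$ represent the same functor, and Yoneda gives an isomorphism $\mathcal{W}\simeq\C_{\mathcal{Q}^{\circlearrowright}}$.

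Finally I would conclude by functoriality of $\C_{(-)}$: Proposition \ref{wheeled-completion} provides an isomorphism of wheeled operads $\mathcal{Q}^{\circlearrowright}\simeq\mathcal{P}$, whence $\C_{\mathcal{Q}^{\circlearrowright}}\simeq\C_{\mathcal{P}}$, and combining with the previous step yields $\C_{\mathcal{P}}\simeq\mathcal{W}$, which is the assertion. As a cross-check one could instead verify the universal property of $\mathcal{W}$ directly against the explicit description of $\C_{\mathcal{P}}$ in Proposition \ref{C_P}, the generator $\mu$ corresponding to $[\pi^{\otimes 2}]\in\mathcal{P}_0(2)$ under the isomorphism of Proposition \ref{operade-gr}.
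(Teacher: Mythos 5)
Your proposal is correct and takes essentially the same route as the paper: the corollary is there an immediate consequence of Proposition \ref{wheeled-completion} (giving $\C_{\mathcal{P}}\simeq\C_{\mathcal{Q}^{\circlearrowright}}$) combined with the fact that both $(-)^{\circlearrowright}$ and $\C_{(-)}$ are free constructions, so that $\C_{\mathcal{Q}^{\circlearrowright}}$ carries the presentation of $\mathcal{Q}$ viewed in wheeled PROPs. You have merely made explicit, via the composite adjunction and the Yoneda argument, the universal-property bookkeeping that the paper leaves implicit, including the correct identification $\hat\mu\underset{1}{\circ}\hat\mu=\hat\mu\circ(\hat\mu\otimes 1)$ and $\hat\mu\underset{2}{\circ}\hat\mu=\hat\mu\circ(1\otimes\hat\mu)$ of partial compositions in the underlying operad.
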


The following Theorem relates the PROP $\C_{\mathcal{P}}$ to the PROP $\Ext$ of extension groups introduced in Section \ref{PROP-E}.
\begin{thm} \label{PROP-E-explicite}
There is an isomorphim of PROPs
$$\chi: \C_{\mathcal{P}} \xrightarrow{\simeq} \Ext.$$
In particular, $\Ext$ inherits a structure of wheeled PROP via this isomorphism.
\end{thm}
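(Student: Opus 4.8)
The plan is to construct the isomorphism $\chi: \C_{\mathcal{P}} \xrightarrow{\simeq} \Ext$ by comparing the explicit bimodule description of $\C_{\mathcal{P}}$ given by Proposition \ref{C_P} (specialized to the wheeled operad $\mathcal{P}$ of Proposition \ref{wheeled-operad}) with the explicit computation of $\Ext(q,l)$ obtained in Proposition \ref{E(q,l)}. The first observation is that these two descriptions are term-by-term identical after substituting the definitions of the operadic and wheeled parts of $\mathcal{P}$. Indeed, Proposition \ref{C_P} gives
$$\C_{\mathcal{P}}(q,l)= \underset{\substack{J \subset \textbf{q}\\ f: J\twoheadrightarrow \textbf{l}}}{\bigoplus} \  \underset{\substack{1 \leq k\leq | \textbf{q}\setminus J| \\ X_1 \amalg \ldots \amalg X_{k } =\textbf{q}\setminus J \\ min(X_1)< \ldots <min(X_k)}}  {\bigoplus} \  \left(\overset{l}{\underset{i=1}{\bigotimes }}\mathcal{P}_0(f^{-1}(i))\right) \otimes \left(\overset{k }{\underset{i=1}{\bigotimes }}\mathcal{P}_w(X_i)\right),$$
and by Proposition \ref{operade-gr} together with Proposition \ref{wheeled-operad} we have $\mathcal{P}_0(m)\simeq Ext^{*}_{\F(\gr; \kk)}(\abe, \abe^{\otimes m})$ and $\mathcal{P}_w(m)\simeq s^{-1}Ext^{*}_{\F(\gr; \kk)}(\abe, \abe^{\otimes m})$. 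Matching these summand-by-summand against Proposition \ref{E(q,l)} yields a linear isomorphism of graded vector spaces in each biarity. So the bulk of the work has already been done in the preceding sections.

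First I would \emph{define} $\chi$ directly on generators: send the generating operation $\mu$ of $\C_{\mathcal{P}}$ (the antisymmetric operation of biarity $(2,1)$ in degree $1$, by Corollary \ref{gen-rel}) to the class $[\pi^{\otimes 2}]\in \Ext(2,1)$. Since by Corollary \ref{gen-rel} the PROP $\C_{\mathcal{P}}$ is the wheeled PROP freely generated by $\mu$ subject to $\mu(\mu\otimes 1)=-\mu(1\otimes\mu)$, to see that this extends to a well-defined morphism of wheeled PROPs it suffices to check that $[\pi^{\otimes 2}]$ is antisymmetric and satisfies the same quadratic relation inside $\Ext$. The antisymmetry is Theorem \ref{Ext-action}, and the quadratic relation is exactly the computation carried out in the proof of Proposition \ref{operade-gr}. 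Thus $\chi$ exists as a morphism of wheeled PROPs.

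Next I would prove that $\chi$ is an isomorphism. The cleanest route is to invoke the adjunction/freeness already established: by Proposition \ref{wheeled-completion} the wheeled operad $\mathcal{P}$ is isomorphic to $\mathcal{P}_0^{\circlearrowright}$, and the functor $\mathcal{P}\mapsto\C_{\mathcal{P}}$ of Proposition \ref{C_P} is compatible with taking underlying (wheeled) operads. Since $\Ext$ is generated by its operadic and wheeled parts — which is precisely the content of the summand decomposition in Proposition \ref{E(q,l)} — the morphism $\chi$ is surjective. Injectivity then follows by comparing dimensions (or ranks) summand-by-summand: each tensor factor $\mathcal{P}_0(f^{-1}(i))$ resp. $\mathcal{P}_w(X_k)$ maps isomorphically, under $f^{\circlearrowright}$ of Proposition \ref{wheeled-completion}, onto the corresponding $Ext$-factor in Proposition \ref{E(q,l)}, and horizontal composition is by disjoint union on both sides. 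Hence $\chi$ restricts to an isomorphism on every direct summand of $\C_{\mathcal{P}}(q,l)$, so it is an isomorphism of $(\mathfrak{S}_q,\mathfrak{S}_l)$-bimodules in each biarity.

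The main obstacle I anticipate is \emph{verifying that $\chi$ respects the structure maps}, not merely that it is a bijection on underlying graded bimodules. Concretely, one must check that the operadic composition, the right $\mathcal{P}_0$-module structure on $\mathcal{P}_w$, and the contractions $\xi^i$ described abstractly in Proposition \ref{C_P} correspond under $\chi$ to the exterior product, the Yoneda product, and the natural transformations $\Lambda^{i+j}\abe\to\Lambda^i\abe\otimes\Lambda^j\abe$ appearing in the vertical and horizontal composition of $\Ext$ (Definition \ref{PROPExt}). This is a sign-bookkeeping problem: the contraction $\xi^i([\pi^{\otimes n}])=(-1)^i[\pi^{\otimes n-1}]$ of Definition \ref{contr-def} and the equivariance of Proposition \ref{contr-equ} must be reconciled with the signs coming from the suspension conventions in $gr\V$ and from the passage $\Lambda^{i+j}\abe\to\Lambda^i\abe\otimes\Lambda^j\abe$. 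The Künneth and adjunction arguments in Proposition \ref{E(q,l)} handle the underlying decomposition, but tracking that the \emph{compositions and contractions agree on the nose} (including signs) is where the care is required; I would reduce this to checking compatibility on the single generator $\mu=[\pi^{\otimes 2}]$ and its first contraction, using Proposition \ref{Yoneda} to identify iterated Yoneda products of $[\pi^{\otimes 2}]$ with the classes $[\pi^{\otimes n}]$, and then propagate to all of $\C_{\mathcal{P}}$ by the universal property of the wheeled completion.
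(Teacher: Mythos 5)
There is a genuine gap in your construction of $\chi$, and it is a logical one rather than a bookkeeping one. You define $\chi$ only on the generator $\mu$ and then extend it ``as a morphism of wheeled PROPs'' using the freeness of $\C_{\mathcal{P}}$ from Corollary \ref{gen-rel}. But at this point in the argument $\Ext$ is merely a PROP (Definition \ref{PROPExt}); the wheeled structure on $\Ext$ is precisely what the theorem's ``in particular'' clause \emph{transports} across $\chi$ after the isomorphism is established (see also Remark \ref{w-on-E}). Invoking the universal property of the free wheeled PROP to map into $\Ext$ therefore presupposes the conclusion — you cannot check compatibility with contractions on the target before the target has contractions. The paper avoids this circularity by first \emph{forgetting} the wheeled structure: as a plain PROP, $\C_{\mathcal{P}}$ is generated by \emph{two} operations, the antisymmetric $\mu$ of biarity $(2,1)$ subject to the quadratic relation, and additionally $\bar{\mu}=\xi^1(\mu)$ of biarity $(1,0)$ in degree $1$, since the wheeled part of $\C_{\mathcal{P}}$ is not reachable from $\mu$ by horizontal and vertical composition alone. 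Consequently $\chi$ must also be specified on $\bar{\mu}$, namely $\chi(\bar{\mu})=[\pi]\in s^{-1}Ext^*_{\F(\gr;\kk)}(\Lambda^1\abe,\abe)$; your proposal omits this value entirely, so as written $\chi$ is not defined on the summands of $\C_{\mathcal{P}}(q,l)$ involving $\mathcal{P}_w$-factors, and your surjectivity claim (``$\Ext$ is generated by its operadic and wheeled parts'') has no map hitting the wheeled part.

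Once this is repaired — define $\chi$ as a morphism of plain PROPs on the two generators $\mu\mapsto[\pi^{\otimes 2}]$ and $\bar{\mu}\mapsto[\pi]$, verify the quadratic relation via Proposition \ref{operade-gr}, and conclude bijectivity by the summand-by-summand comparison of Propositions \ref{C_P} and \ref{E(q,l)} — your argument coincides with the paper's proof. Your closing concern about verifying that compositions and contractions match up to signs is legitimate (the paper is in fact terser than you are on this point, asserting the isomorphism directly from the comparison of formulas), but it does not substitute for the missing generator: the compatibility of $\phi^1_1$-type contractions is checked \emph{after} the PROP isomorphism exists, not as part of defining $\chi$.
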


\begin{proof}
Forgetting the wheeled structure on $ \C_{\mathcal{P}}$, the PROP $ \C_{\mathcal{P}}$ is generated by one antisymmetric operation $\mu$ of biarity $(2,1)$ in degree $1$ subject to the quadratic relation:
$$\mu(\mu \otimes 1)=-\mu(1 \otimes \mu)$$
and one operation $\bar{\mu}=\xi^1(\mu)$ of biarity $(1,0)$ in degree $1$.

The functor $\chi$ is defined on these generators by:
$$\chi(\mu)=[\pi^{\otimes 2}] \qquad \mathrm{and} \qquad \chi(\bar{\mu})=[\pi] \in s^{-1}Ext^*(\Lambda^1 \abe, \abe)$$
and $[\pi^{\otimes 2}] $ satisfies the quadratic relation by Proposition \ref{operade-gr}.
This defines an isomorphism of PROPs since 
$$ \C_{\mathcal{P}}(n,m) \simeq \Ext(n,m)$$
comparing the formulas given in Proposition \ref{C_P} and the one obtained in Proposition \ref{E(q,l)}.

\end{proof}

\begin{rem} \label{w-on-E}
The existence of a wheeled structure on the PROP $\Ext$ is quite surprising since it is induced by a morphism (of degree $0$):
$$Ext^*_{\F(\gr;\kk)}(\abe, \abe^{\otimes 2})  \to s^{-1} Ext^*_{\F(\gr;\kk)}(\abe, \abe).$$
By Theorem \ref{Ext}, $Ext^1_{\F(\gr;\kk)}(\abe, \abe^{\otimes 2}) \simeq \kk$ 
thus the Yoneda product with a generator in $Ext^1_{\F(\gr;\kk)}(\abe, \abe^{\otimes 2})$ gives a  morphism:
$$ Ext^*_{\F(\gr;\kk)}(\abe, \abe) \to s^1 Ext^*_{\F(\gr;\kk)}(\abe, \abe^{\otimes 2}).$$
By Theorem \ref{Ext}, this morphism is an isomorphism and the wheeled structure on the PROP $\Ext$ is given by the inverse on this morphism.

It follows that the existence of a wheeled structure  is very specific to the situation studied in this paper (i.e. $Ext$-groups  in the category $\F(\gr; \kk)$ between the tensor powers  of the functor $\abe$) and, in general, there is no such natural map.

\end{rem}

\begin{rem}
Theorem \ref{PROP-E-explicite} should be viewed as an extension, in the wheeled world, of the   isomorphism of PROPs
$$C_{\mathcal{P}_0} \  {\simeq}\  \Ext^0$$
given in \cite[Proposition 3.5]{Vespa}. More precisely there is a commutative diagram
$$\xymatrix{
C_{\mathcal{P}} \ar[r]^{\simeq} & \Ext \\
C_{\mathcal{P}_0} \ar[r]^{\simeq} \ar[u] & \Ext^0 \ar[u]
}$$
where the vertical maps are the inclusion functors.
\end{rem}
\part{The morphism of wheeled PROPs $\phi: \Ext \to \Hst$}

In this section we define a morphism from the wheeled PROP $\Ext$  to the wheeled PROP $\Hst$ of stable cohomology considered in Section \ref{PROP-H}.

\begin{thm} \label{phi}
Let $\mu$ be the generator of the wheeled PROP $\Ext$.
There is a morphism of wheeled PROPs $\phi: \Ext  \to \Hst$ given on generators by
$$\phi(\mu)=h_1.$$
\end{thm}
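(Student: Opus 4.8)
The plan is to use the presentation of $\Ext$ as a wheeled PROP by generators and relations established earlier, and to reduce the construction of $\phi$ to verifying that $h_1$ satisfies the defining conditions of the generator. By Corollary \ref{gen-rel} together with the isomorphism $\chi : \C_{\mathcal{P}} \xrightarrow{\simeq} \Ext$ of Theorem \ref{PROP-E-explicite}, the wheeled PROP $\Ext$ is presented as the wheeled PROP generated by a single antisymmetric operation $\mu$ of biarity $(2,1)$ in cohomological degree $1$, subject only to the quadratic relation $\mu \circ (\mu \otimes 1) = -\mu \circ (1 \otimes \mu)$ in biarity $(3,1)$. Consequently, by the universal property of such a presentation in the category of wheeled PROPs, to give a morphism of wheeled PROPs $\phi : \Ext \to \Hst$ is precisely to give an operation $\phi(\mu) \in \Hst(2,1)$ that is antisymmetric, of cohomological degree $1$, and that satisfies the quadratic relation; the images of all other operations, and compatibility with the contractions, are then automatically determined.

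I would therefore set $\phi(\mu) := h_1 \in \Hst(2,1)$ and check the three required properties, each of which has already been recorded in Section \ref{Kawa-classes}. First, $h_1$ defines a nonzero element of $\Hst(2,1)$ in cohomological degree $1$. Second, $\mathfrak{S}_2$ acts on $h_1$ by the signature, so $h_1$ is antisymmetric; this follows from \cite[(4.4)]{Kawa-Magnus} and the anticommutativity of the cup product. Third, the quadratic relation $h_1 \circ (h_1 \otimes 1) = -h_1 \circ (1 \otimes h_1)$ holds in $\Hst(3,1)$: this is exactly relation (\ref{relation-Kawa}), coming from \cite[(4.11)]{Kawa-Magnus}. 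Since $h_1$ meets all the defining conditions imposed on $\mu$ in the presentation (Definition \ref{operade-Q} and Corollary \ref{gen-rel}), the assignment $\mu \mapsto h_1$ extends uniquely to a morphism of wheeled PROPs $\phi : \Ext \to \Hst$.

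Compatibility with the wheeled (contraction) structure is automatic and requires no separate verification: the contraction $\overline{\mu} = \xi^1(\mu)$ is not an independent generator but is derived from $\mu$ via the wheeled structure, so $\phi$ necessarily sends $\overline{\mu}$ to $\xi_1^1(h_1) = \overline{h_1}$, in agreement with Proposition \ref{Prop-Kawa}. The only genuine analytic input is the quadratic relation on $h_1$, which was already proved by the first author in \cite{Kawa-Magnus}; once this is in hand, the existence of $\phi$ is a formal consequence of the universal property. I expect the only subtlety to be a bookkeeping one, namely checking that the sign in the quadratic relation defining $\Ext$ matches the sign in relation (\ref{relation-Kawa}); since both carry the same minus sign and $h_1$ carries the signature action of $\mathfrak{S}_2$ matching the antisymmetry of $\mu$, these conventions align directly.
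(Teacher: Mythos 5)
Your proposal is correct and follows essentially the same route as the paper: both use the presentation of $\Ext$ as the wheeled PROP on one antisymmetric degree-$1$ generator of biarity $(2,1)$ modulo the quadratic relation (Theorem \ref{PROP-E-explicite} and Corollary \ref{gen-rel}), and both reduce the theorem to the facts from Section \ref{Kawa-classes} that $h_1$ is antisymmetric and satisfies relation (\ref{relation-Kawa}). The only divergence is cosmetic: the paper verifies compatibility with the contractions explicitly, checking $\xi^i_1(h_1)=(-1)^{i+1}\overline{h_1}$ for $i\in\{1,2\}$ via Proposition \ref{Prop-Kawa} and Remark \ref{xi^2}, whereas you deduce it from the universal property of the presentation --- a legitimate streamlining, since the sign behaviour recorded in Remark \ref{xi^2} is itself forced by the bi-equivariance axiom of a wheeled PROP.
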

\begin{proof}
By the isomorphism given in Theorem \ref{PROP-E-explicite} the PROP $ \Ext$ is generated by the antisymmetric element $\mu$ of biarity $(2,1)$ in degree $1$ subject to the quadratic relation:
$$\mu(\mu \otimes 1)=-\mu(1 \otimes \mu)$$
and one operation $\bar{\mu}=\xi^1(\mu)$ of biarity $(1,0)$ in degree $1$.
The functor $\phi$ is defined on these generators by:
$$\phi_{2,1}(\mu)=h_1 \qquad \mathrm{and} \qquad \phi_{1,0}(\bar{\mu})=\overline{h_1}.$$

By Section \ref{Kawa-classes}, $h_1$ is antisymmetric and satisfies the quadratic relation. 

For $i \in \{1,2 \}$, by Proposition \ref{Prop-Kawa} and Remark \ref{xi^2}, we have $\xi^i_1(h_1)=(-1)^{i+1}\overline{h_1}$. It follows that the following diagram is commutative:
$$\xymatrix{
\Ext(2,1) \ar[r]^-{\phi_{2,1}} \ar[d]_{\xi^i_1} &\Hst(2,1) \ar[d]^{\xi^i_1}\\
\Ext(1,0) \ar[r]_-{\phi_{1,0}} & \Hst(1,0)
}$$
giving the compatibility of the wheeled PROP structures.

\end{proof}

\begin{cor}
The sub-wheeled PROP $\mathcal{K}$ of $\mathcal{H}$ is equivalent to the wheeled PROP associated to the wheeled completion of the operadic suspension of the operad $\mathcal{C}om$.
\end{cor}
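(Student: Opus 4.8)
The plan is to obtain the corollary by feeding the structural description of $\Ext$ into the main theorem, so the first thing I would do is assemble the identification of the source wheeled PROP. By Corollary \ref{gen-rel} together with Theorem \ref{PROP-E-explicite}, the wheeled PROP $\Ext$ is isomorphic to $\C_{\mathcal{P}}$, i.e. to the wheeled PROP generated by a single antisymmetric operation $\mu$ of biarity $(2,1)$ in degree $1$ subject to $\mu(\mu\otimes 1)=-\mu(1\otimes\mu)$. Plugging in Proposition \ref{wheeled-completion}, which gives $\mathcal{P}\simeq\mathcal{P}_0^{\circlearrowright}$, and Proposition \ref{operad-sus}, which identifies $\mathcal{P}_0$ with the operadic suspension $\mathcal{S}\underset{H}{\otimes}\mathcal{C}om$ of $\mathcal{C}om$, yields
\[
\Ext \;\simeq\; \C_{\mathcal{P}}\;\simeq\;\C_{\mathcal{P}_0^{\circlearrowright}}\;\simeq\;\C_{(\mathcal{S}\underset{H}{\otimes}\mathcal{C}om)^{\circlearrowright}} .
\]
Thus $\Ext$ is exactly the wheeled PROP associated to the wheeled completion of the operadic suspension of $\mathcal{C}om$, which is the target object in the statement.

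Next I would identify $\mathcal{K}$ with the image of $\phi$. By definition $\mathcal{K}$ is the sub-wheeled PROP of $\Hst$ generated by $h_1$, while $\Ext$ is generated as a wheeled PROP by $\mu$ and $\phi(\mu)=h_1$ by Theorem \ref{phi}; since $\phi$ is a morphism of wheeled PROPs, its image $\phi(\Ext)$ is closed under all the PROP operations and contractions and contains $h_1$, hence equals the sub-wheeled PROP generated by $h_1$, namely $\mathcal{K}$. This is precisely the conclusion $\phi(\Ext)\simeq\mathcal{K}$ of Theorem \ref{phi}. Combining this with the chain of the previous paragraph then gives the asserted equivalence $\mathcal{K}\simeq\C_{(\mathcal{S}\underset{H}{\otimes}\mathcal{C}om)^{\circlearrowright}}$, and the corollary follows.

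The step I expect to be the real content rather than bookkeeping is the faithfulness underlying $\phi(\Ext)\simeq\mathcal{K}$: the morphism $\phi$ factors as a surjection $\Ext\twoheadrightarrow\mathcal{K}$ followed by the inclusion $\mathcal{K}\hookrightarrow\Hst$, and the surjection is immediate from the generation property, but upgrading it to an equivalence of wheeled PROPs requires that $h_1$ together with all its iterated horizontal and vertical composites and contractions satisfy no relations beyond antisymmetry and the quadratic relation. Concretely I would control this by comparing, in each biarity $(q,l)$ and each cohomological degree, the explicit basis of $\Ext(q,l)$ furnished by Proposition \ref{E(q,l)} with the corresponding composites of $h_1$ in $\Hst(q,l)$, using the description of the classes $h_p$ and $\overline{h_p}$ and their provenance from $h_1$ recorded in Proposition \ref{Prop-Kawa}. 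This matching of generators degree by degree is the delicate point; once it is in place the surjection $\Ext\twoheadrightarrow\mathcal{K}$ is an isomorphism and the structural identification of the first paragraph transports verbatim to $\mathcal{K}$.
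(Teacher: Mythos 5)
Your first two paragraphs are exactly the paper's proof: Theorem \ref{PROP-E-explicite} and Proposition \ref{wheeled-completion} identify $\Ext$ with $\C_{\mathcal{P}_0^{\circlearrowright}}$, Proposition \ref{operad-sus} identifies $\mathcal{P}_0$ with the operadic suspension $\mathcal{S}\underset{H}{\otimes}\mathcal{C}om$, and Theorem \ref{phi} (resting on Section \ref{Kawa-classes}) gives $\phi(\Ext)\simeq\mathcal{K}$. The paper's proof consists of precisely these two steps and stops there.

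Your third paragraph, however, proposes a step that cannot be carried out with the paper's tools. To upgrade the surjection $\Ext\twoheadrightarrow\mathcal{K}$ to an isomorphism you suggest matching, in each biarity and degree, the explicit basis of $\Ext(q,l)$ from Proposition \ref{E(q,l)} against the corresponding composites of $h_1$ in $\Hst(q,l)$. But that matching presupposes knowing the composites of $h_1$ are linearly independent in $\Hst(q,l)$, and this is not available: for $l\geq 1$ the groups $\Hst(q,l)$ are not computed anywhere (only $\Hst(q,0)$ and $\Hst(0,l)$ are known), and Proposition \ref{Prop-Kawa} merely identifies certain distinguished composites with the classes $h_p$ and $\overline{h_p}$ --- it says nothing about relations among arbitrary composites. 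Injectivity of $\phi_{q,l}$ is known only at $l=0$, where it is Djament's theorem (Theorem \ref{Djament}); for $l\geq 1$ it is exactly the injectivity half of the Conjecture that $\phi$ is an isomorphism, which the paper explicitly leaves open. So the ``delicate point'' you defer is not bookkeeping awaiting execution but an open problem, and correspondingly the paper makes no attempt at it: its proof of the corollary establishes that $\mathcal{K}$ is the image of $\C_{(\mathcal{S}\underset{H}{\otimes}\mathcal{C}om)^{\circlearrowright}}$ under the wheeled-PROP morphism $\phi$ --- i.e.\ the wheeled PROP generated by $h_1$ subject to at least antisymmetry and the quadratic relation --- and the word ``equivalent'' in the statement has to be read at that level of strength. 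On your stronger, literal reading, the corollary would subsume part of the open conjecture, and neither your proposal nor the paper proves it.
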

\begin{proof}
 By Theorem \ref{PROP-E-explicite} and Proposition \ref{wheeled-completion}, $\Ext$ is the wheeled PROP generated by the wheeled completion of the operad $\mathcal{P}_0$ which is the suspension of the operad $\mathcal{C}om$ by Proposition \ref{operad-sus}. By Section \ref{Kawa-classes}, $\phi(\Ext) \simeq \mathcal{K}$.
\end{proof}

The morphism of wheeled PROPs $\phi: \Ext \to \Hst$ induces an explicit graded morphism on Hom-sets:
\begin{equation} \label{morphism-on-Hom}
\phi_{q,l}: \overset{q-l}{\underset{j=0}{\bigoplus}}s^{-j}Ext^*_{\F(\gr; \kk)}(\abe^{\otimes l} \otimes \Lambda^j  \abe, \abe^{\otimes q}) \to H^*_{st}(Hom_{\mathcal{V}}(\abe^{\otimes l}, \abe^{\otimes q} )).
\end{equation}

We denote by $\Ext_w$ the subPROP of $\Ext$ corresponding to forgetting the operadic part in the wheeled PROP $\Ext$ (see Remark \ref{subPROP}).
By restriction, the morphism $\phi: \Ext \to \Hst$ induces a morphism $\phi': \Ext_w \to \Hst'$ where $\Hst'$ is defined in  Definition \ref{H'}. 

One of the main and difficult result of Djament  \cite[Th\'eor\`eme 4]{Djament} gives an isomorphism of graded morphisms 
$$ \underset{j\in \N}{\bigoplus} s^{-j}Ext^*_{\F(\gr; \kk)}(\Lambda^j  \abe, \abe^{\otimes q}) \simeq H^*_{st}(\abe^{\otimes q} ).$$
By  \cite[Corollaire 3.7]{Djament} this isomorphism is induced by the morphism
$$\phi_{q,0}: \underset{j\in \N}{\bigoplus} s^{-j}Ext^*_{\F(\gr; \kk)}(\Lambda^j  \abe, \abe^{\otimes q}) \to H^*_{st}(\abe^{\otimes q} ).$$
We deduce the following result:
\begin{thm}  \cite[Th\'eor\`eme 4]{Djament} \label{Djament}
The functor $\phi': \Ext_w \to \Hst'$ is an equivalence of categories.
\end{thm}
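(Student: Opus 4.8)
The plan is to exploit that $\phi'$ is a restriction of the morphism of PROPs $\phi$, and hence is the identity on objects. Since both $\Ext_w$ and $\Hst'$ have object set $\N$, showing that $\phi'$ is an equivalence of categories reduces to showing that it is fully faithful, i.e. that the graded $\kk$-linear map $\phi'_{n,m}\colon \Ext_w(n,m) \to \Hst'(n,m)$ is an isomorphism for all $n,m \in \N$. (In fact this exhibits $\phi'$ as an isomorphism of categories.) I would organise the verification according to the three cases dictated by the definitions of $\Ext_w$ (Remark \ref{subPROP}) and $\Hst'$ (Definition \ref{H'}).

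First, for $m \notin \{0,n\}$ both $\Ext_w(n,m)$ and $\Hst'(n,m)$ vanish by definition, so $\phi'_{n,m}$ is trivially an isomorphism. Next, for $m = n$ both sides equal $\kk[\mathfrak{S}_n]$ concentrated in degree $0$; since $\phi$ is a morphism of PROPs it preserves the symmetry operations, so I would check that $\phi'$ carries the basis element indexed by $\sigma \in \mathfrak{S}_n$ to the corresponding permutation in $\Hst(n,n)^0 \simeq \kk[\mathfrak{S}_n]$, giving that $\phi'_{n,n}$ is the identity and in particular an isomorphism.

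The essential case is $m = 0$. By Remark \ref{subPROP}(2) (together with Remark \ref{rem-subPROP}) forgetting the operadic part does not alter biarity $(n,0)$, so $\Ext_w(n,0) = \Ext(n,0) = \bigoplus_{j \in \N} s^{-j} Ext^*_{\F(\gr;\kk)}(\Lambda^j \abe, \abe^{\otimes n})$, while $\Hst'(n,0) = \Hst(n,0) = H^*_{st}(\abe^{\otimes n})$. On these spaces $\phi'_{n,0}$ is precisely the map $\phi_{n,0}$ of (\ref{morphism-on-Hom}), and by \cite[Corollaire 3.7]{Djament} this is the map realising the isomorphism of \cite[Th\'eor\`eme 4]{Djament}. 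Invoking that result settles this case, and full faithfulness follows.

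The substance of the argument lies entirely in the case $m=0$, and it is not something I would reprove: the isomorphism $\bigoplus_{j} s^{-j} Ext^*_{\F(\gr;\kk)}(\Lambda^j \abe, \abe^{\otimes n}) \simeq H^*_{st}(\abe^{\otimes n})$ is Djament's deep functor-homology computation, which I take as given. The genuine work on my side is bookkeeping: confirming that $\phi$ indeed restricts to a well-defined functor $\Ext_w \to \Hst'$ — that is, that it sends the degree-$0$ permutation part into the degree-$0$ permutation part and the biarity-$(n,0)$ part into $\Hst(n,0)$ — and that the resulting map in biarity $(n,0)$ agrees exactly with the $\phi_{n,0}$ to which Djament's corollary applies.
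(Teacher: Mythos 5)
Your proposal is correct and follows essentially the same route as the paper: the substantive content is exactly Djament's Th\'eor\`eme 4 in biarity $(n,0)$, identified with the map $\phi_{n,0}$ via his Corollaire 3.7, with the remaining compatibilities (symmetric group actions, horizontal composition) supplied by $\phi$ being a morphism of PROPs. The only difference is that you spell out explicitly the bookkeeping for the cases $m=n$ and $m\notin\{0,n\}$, which the paper leaves implicit in the definitions of $\Ext_w$ and $\Hst'$.
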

Note that \cite[Proposition 3.5]{Djament} corresponds to the compatibility of 
the isomorphisms $\phi_{q,0}$ with the horizontal composition in the PROPs $ \Ext_w $ and $\Hst'$. Theorem \ref{Djament} gives also the compatibility of 
the isomorphisms $\phi_{q,0}$  with the action of the symmetric groups $\mathfrak{S}_q$.

For stable cohomology with coefficients given by a bivariant functor, Djament gives a conjecture in  \cite[Th\'eor\`eme 7.4]{Djament}. In particular Djament conjectures  that there exists  graded isomorphisms 
$$\overset{q-l}{\underset{j=0}{\bigoplus}}s^{-j}Ext^*_{\F(\gr; \kk)}(\abe^{\otimes l} \otimes \Lambda^j  \abe, \abe^{\otimes q}) \simeq H^*_{st}(Hom_{\mathcal{V}}(\abe^{\otimes l}, \abe^{\otimes q} )).$$

Natural candidates for maps giving these isomorphisms are the maps $\phi_{q,l}$. By functoriality these maps are compatible with horizontal and vertical compositions in the PROPs and with the contractions.

Djament's conjecture can be rephrased in the following way

\begin{conj}
The morphism $\phi$ is an isomorphism of wheeled PROPs.
\end{conj}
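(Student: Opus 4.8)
The plan is to prove the conjecture by showing that $\phi$ is bijective in each biarity. Since $\phi$ is already a morphism of wheeled PROPs by Theorem~\ref{phi}, it is an isomorphism if and only if every graded map
$$\phi_{q,l}\colon \Ext(q,l)\to \Hst(q,l)$$
of (\ref{morphism-on-Hom}) is an isomorphism of $(\mathfrak{S}_q,\mathfrak{S}_l)$-bimodules. The source is completely understood: Propositions~\ref{E(q,l)} and \ref{Ext2} give $\Ext(q,l)$, and in particular its graded dimension, explicitly. The case $l=0$ is Djament's Th\'eor\`eme~4 (Theorem~\ref{Djament}), where $\phi_{q,0}$ is already an isomorphism, and the case $q=0$ is settled because both sides vanish for $l>0$ by \cite{DV2}. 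Thus the whole conjecture reduces to the mixed range $0<l\le q$; and since the dimensions of the source are known, it suffices to compute $\dim_{\kk}\Hst(q,l)$ in each cohomological degree and to prove that $\phi_{q,l}$ is injective (equivalently, once the dimensions agree, surjective).

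The heart of the matter is therefore the computation of the stable cohomology $H^*_{st}(Hom_{\mathcal{V}}(\abe^{\otimes l},\abe^{\otimes q}))$ with bivariant coefficients. I would attack this by extending the functor-homology machinery of \cite{DV2} and \cite{Djament} from the purely covariant coefficients $\abe^{\otimes q}$, where the answer is $\bigoplus_j s^{-j}Ext^*_{\F(\gr;\kk)}(\Lambda^j\abe,\abe^{\otimes q})$ by Theorem~\ref{Djament}, to the bivariant coefficients $(\abe^\vee)^{\otimes l}\boxtimes\abe^{\otimes q}$. The natural device is a filtration or d\'evissage of the contravariant factor $(\abe^\vee)^{\otimes l}$ giving rise to a spectral sequence whose input is Djament's covariant computation and whose output should reproduce the summands of $\Ext(q,l)$ listed in Proposition~\ref{E(q,l)}; the candidate identification on each associated-graded piece is furnished by the cup products of the Kawazumi classes $h_p$ and $\overline{h_p}$, that is, by $\phi$ itself.

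The main obstacle is that this bivariant computation lies outside the reach of Djament's covariant techniques: the interaction of the contravariant inputs $(\abe^\vee)^{\otimes l}$ with the covariant outputs $\abe^{\otimes q}$ in the stable range has no direct analogue in \cite{Djament}, and it is precisely the content of Djament's conjecture. I emphasise that injectivity of $\phi$ is \emph{not} formally reducible to the solved case $l=0$. The iterated-contraction map $\Xi\colon \Ext(q,l)\to\bigoplus\Ext(q-l,0)$, although assembled from the isomorphisms $\xi^i$ of Remark~\ref{w-on-E}, is not injective: already for $(q,l)=(2,1)$ the operadic generator $\mu$ and a suitable linear combination of $\bar{\mu}\otimes 1$ and $1\otimes\bar{\mu}$ in $\Ext(2,1)$ have identical iterated contractions, so $\Xi$ has a non-trivial kernel. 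Since $\phi$ commutes with contractions, $\Xi$ cannot separate these classes, and whether $\phi_{2,1}$ is injective depends exactly on the linear independence of $h_1$, $\overline{h_1}\otimes 1$ and $1\otimes\overline{h_1}$ inside $\Hst(2,1)$ — a statement about the target that only the bivariant computation can decide. Hence both injectivity and surjectivity of $\phi$ genuinely require new stable-cohomology input beyond the wheeled-PROP formalism, which is why the statement remains a conjecture; once $\dim_{\kk}\Hst(q,l)=\dim_{\kk}\Ext(q,l)$ is established together with the injectivity of the candidate $\phi_{q,l}$, the equality of dimensions forces surjectivity and thus the isomorphism of wheeled PROPs.
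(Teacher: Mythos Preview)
The statement you are addressing is a \emph{conjecture} in the paper; there is no proof of it, and the paper does not claim one. What the paper does is rephrase Djament's conjecture from \cite{Djament} in the language of wheeled PROPs and record the known partial case (Theorem~\ref{Djament}, the case $l=0$). There is therefore nothing to compare your proposal against.

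Your write-up is not a proof either, and you correctly say so: you outline a strategy (d\'evissage of the contravariant factor, a spectral sequence reducing to the covariant case), and then you explain why that strategy does not go through with the available tools, concluding that ``the statement remains a conjecture''. This is a fair assessment of the situation, and your observation that iterated contractions $\Ext(q,l)\to\bigoplus\Ext(q-l,0)$ fail to be injective already for $(q,l)=(2,1)$ is correct (the source is $3$-dimensional in degree $1$ while the target of the two contractions is $2$-dimensional), so injectivity of $\phi$ cannot be reduced formally to the known case $l=0$.

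In short: there is no gap to name and no alternative route to compare, because neither you nor the paper offers a proof. If the exercise was to supply one, the honest answer is that the conjecture is open; your text functions as a commentary on why, not as a proof.
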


\bibliographystyle{amsalpha}
\bibliography{Kawazumi-Vespa.bib}
\end{document}